\definecolor{indigo}{rgb}{0.29, 0.0, 0.51}
\newcommand*\bigcdot{\mathpalette\bigcdot@{0.6}}
\newcommand*\bigcdot@[2]{\mathbin{\vcenter{\hbox{\scalebox{#2}{$\m@th#1\bullet$}}}}}
\newtheorem{question}{Question}
\newtheorem{theorem}{Theorem}
\newtheorem{lemma}[theorem]{Lemma}
\theoremstyle{definition}
\theoremstyle{remark}
\newtheorem{remark}[theorem]{Remark}
\newtheorem{example}[theorem]{Example}
\def\dfn#1{{\em #1}}
\def\R{\mathbb{R}}
\def\Z{\mathbb{Z}}
\DeclareMathOperator\tb{tb}
\DeclareMathOperator\rot{r}
\DeclareMathOperator\tw{tw}
\DeclareMathOperator\under{u}
\def\tbb{\overline {\tb}}
\numberwithin{theorem}{section}
\theoremstyle{plain}
\begin{document}

\title{Cabling Legendrian and transverse knots}

\author{Apratim Chakraborty}

\author{John B. Etnyre}

\author{Hyunki Min}

\address{TCG CREST, Kolkata, India}
\email{apratimn@gmail.com}

\address{School of Mathematics \\ Georgia Institute
of Technology \\  Atlanta, Georgia}
\email{etnyre@math.gatech.edu}

\address{Department of Mathematics \\ Massachusetts Institute of Technology \\  Cambridge, Massachusetts}
\email{hkmin@mit.edu}


\begin{abstract}
In this paper we will show how to classify Legendrian and transverse knots in the knot type of ``sufficiently positive" cables of a knot in terms of the classification of the underlying knot. We will also completely explain the phenomena of ``Legendrian large" cables. These are Legendrian representatives of cables that have Thurston-Bennequin invariant larger than the framing coming from the cabling torus. Such examples have only recently, and unexpectedly, been found. We will also give criteria that determines the classification of Legendrian and transverse knots the knot type of negative cables. 
\end{abstract}

\maketitle

\section{Introduction}

There have been many partial results concerning the classification of Legendrian knots in cabled knot types, and the study of such Legendrian knots has greatly enhanced our understanding of the behavior of Legendrian knots. For example, the first classification of Legendrian and transversely non-simple knot types was given in \cite{EH04} and even more exotic phenomena was observed in \cite{ELT12}.  In \cite{EH04}, the second author and Honda classified Legendrian and transverse cables when the underlying knot type is Legendrian simple and uniformly thick and also showed the $(3,2)$-cable of the right handed trefoil was not Legendrian simple. In \cite{Tosun13}, Tosun obtained further classification results for positive cables when the underlying knot type is Legendrian simple and the contact width is an integer. In \cite{ELT12}, the second author, LaFountain and Tosun completely classified Legendrian and transverse cables of torus knots. A general approach to studying Legendrian representatives of satellite knots was explored by the second author and V\'ertesi in \cite{EtnyreVertesi18}. 

In this paper, we will show how to completely classify the Legendrian and transverse knots in a sufficiently positive cable of a knot type $K$ in terms of the  classification of Legendrian and transverse knots in the knot type $K$. We also completely explain the phenomena of ``Legendrian large cables" \cite{Mccullough18}, that are Legendrian representatives of $(p,q)$-cables with Thurston-Bennequin invariant larger than $pq$, which prior to the recent work of Yasui \cite{Yasui16}, was thought to be the upper bound on such cables. Finally we describe criteria that allows one to understand negative cables of knots. 

\smallskip
\noindent
{\bf Slope and cabling conventions.} Given an oriented null-homologous knot $K$ in a $3$--manifold $Y$, it has a solid torus neighborhood $N$. We can take $\lambda$ to be the curve on $\partial N$ that bounds a Seifert surface in $\overline{Y-N}$ and $\mu$ the curve on $\partial N$ that bounds a disk in $N$. For $p$ and $q$ relatively prime, the $(p,q)$-cable of $K$ will be the curve on $\partial N$ in the homology class $p[\lambda]+q[\mu]$. We will always take $p>0$ since one can consider cables with negative $p$ as cables of $-K$. We will also call this curve a slope $q/p$ curve on $\partial N$. {\em Warning:} this slope convention is different from the one used in many of the early papers in contact geometry. In those papers this slope would be called $p/q$. We adopt this convention as it agrees with the standard convention used when describing cables and surgery in topology.

\subsection{Cabling and simplicity of knots}\label{introgrc}

Recall a knot type $K$ is called \dfn{Legendrian simple} if two Legendrian knots in the knot type are Legendrian isotopic if and only if they share the same Thurston-Bennequin invariants and rotation numbers. Similarly $K$ is \dfn{transversely simple} if two transverse knots in the knot type are transversely isotopic if and only if they share the same self-linking numbers. 

We begin with some corollaries of our main results that roughly show that 
\begin{enumerate}
\item for sufficiently positive cables of a knot type, the Legendrian and transverse classification is as complex as it is for the underlying knot,
\item for sufficiently negative cables, the classification can become simpler, and
\item for cables of slope near the maximal Thurston-Bennequin invariant of a knot, the classification can become more complex. 
\end{enumerate}
The first statement is made precise in the following results; but first recall that the contact width of a knot type $K$ is $\omega(K)$, the supremum of the dividing slopes of all convex tori which bound a solid torus representing the knot type $K$. 

\begin{theorem}\label{tm1}
  A knot type $K$ in $(S^3,\xi_{std})$ is Legendrian simple if and only if the cabled knot type $K_{(p,q)}$ is Legendrian simple for any $q/p>\lceil \omega(K) \rceil$. 

  A knot type $K$ in $(S^3,\xi_{std})$ is transversely simple if and only if the cabled knot type $K_{(p,q)}$ is transversely simple for any $q/p>\lceil \omega(K) \rceil$. 
\end{theorem}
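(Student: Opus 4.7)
The plan is to derive this theorem as a direct consequence of the main classification theorem for sufficiently positive cables, which will establish a bijective correspondence between Legendrian isotopy classes of representatives of $K_{(p,q)}$ for $q/p > \lceil \omega(K) \rceil$ and Legendrian isotopy classes of representatives of $K$ together with some explicit discrete ``stabilization'' data describing how a cable sits inside a standard neighborhood. Crucially, the classical invariants $(\tb, \rot)$ of the cabled Legendrian will be determined by the classical invariants of the underlying Legendrian together with this data, via an explicit cabling formula.

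For the forward direction, I would assume $K$ is Legendrian simple and take two Legendrian representatives $L_0, L_1$ of $K_{(p,q)}$ with matching Thurston--Bennequin and rotation numbers. The classification theorem realizes each $L_i$ as a cable of some Legendrian $\Lambda_i$ in the knot type $K$ equipped with stabilization data $\sigma_i$, and matching the classical invariants of the $L_i$ forces $\Lambda_0$ and $\Lambda_1$ to have identical $(\tb, \rot)$ and $\sigma_0 = \sigma_1$. Legendrian simplicity of $K$ then provides a Legendrian isotopy between $\Lambda_0$ and $\Lambda_1$, which the correspondence promotes to a Legendrian isotopy between $L_0$ and $L_1$.

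For the reverse direction, I would assume $K_{(p,q)}$ is Legendrian simple for some such $q/p$ and take Legendrian representatives $\Lambda_0, \Lambda_1$ of $K$ with matching classical invariants. Cabling each inside a standard neighborhood produces Legendrian representatives $L_0, L_1$ of $K_{(p,q)}$ sharing $(\tb, \rot)$ by the cabling formula; simplicity of the cable yields a Legendrian isotopy between $L_0$ and $L_1$, and the classification theorem, by allowing us to recover the underlying $\Lambda_i$ from $L_i$ (for instance as a Legendrian core of an appropriate standard solid torus neighborhood), promotes this back to an isotopy between $\Lambda_0$ and $\Lambda_1$. The transverse statement is handled analogously using the transverse version of the classification theorem.

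The essential work, and the main obstacle, lies not in this formal reduction but in proving the classification theorem itself: one must show that when $q/p > \lceil \omega(K) \rceil$, every Legendrian representative of $K_{(p,q)}$ can be Legendrian isotoped to sit on the convex boundary of a standard neighborhood of a Legendrian representative of $K$, and that this correspondence descends to a bijection on isotopy classes. The hypothesis $q/p > \lceil \omega(K) \rceil$ is precisely what ensures that any convex torus carrying the cable has dividing slope beyond the contact width of $K$, ruling out the unexpected ``Legendrian large'' behavior described in the introduction and making the correspondence clean; once this structural result is in place, Theorem~\ref{tm1} follows as outlined.
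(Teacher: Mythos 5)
Your proposal matches the paper's proof: the paper deduces Theorem~\ref{tm1} immediately from the classification of sufficiently positive cables (Theorem~\ref{classification-positive}, with Theorem~\ref{postransverse} for the transverse case), exactly the reduction you outline, with the disjointness of the $(p,q)$ diamonds providing the recovery of the underlying knot's invariants and the integer-slope case being trivial since then $K_{(p,q)}=K$. The substantive work is indeed in the classification theorems themselves, which the paper proves separately, so your formal reduction is essentially the paper's argument.
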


\begin{remark}
  Since $\tbb(K) \leq \omega(K) \leq \tbb(K)+1$ for any knot type $K$ in $(S^3,\xi_{std})$, Theorem~\ref{tm1} remains true if we replace $\lceil \omega(K) \rceil$ with $\tbb(K) + 1$. 
\end{remark}

Under an extra hypothesis we see the same result for transverse knots and sufficiently negative cables. The term ``sufficiently negative" will be made clear in Section~\ref{grc}.\\

A knot $K$ is said to have the uniform thickness property if any solid torus representing the knot type of $K$ can be thickened to a standard neighborhood of a Legendrian representative of $K$ and $\omega(K)$ is equal to the maximal Thurston-Bennequin number $\tbb(K)$ of Legendrian representatives of $K$.  
\begin{theorem}\label{tm2}
  Suppose $K$ is a uniform thick knot in $(S^3,\xi_{std})$. Then $K$ is transversely simple if and only if the cabled knot type $K_{(p,q)}$ is transversely simple for any $q/p$ that is sufficiently negative. 
\end{theorem}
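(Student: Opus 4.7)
The plan is to reduce the transverse classification of $K_{(p,q)}$ for very negative $q/p$ to that of $K$ itself, exploiting uniform thickness in an essential way. Since the statement is an if-and-only-if, I treat the two directions separately. For the reverse implication, I would argue contrapositively: if $T_1, T_2$ are transverse representatives of $K$ with equal self-linking but not transversely isotopic, then taking transverse $(p,q)$-cables sitting on the boundaries of standard transverse neighborhoods of the $T_i$ gives transverse representatives of $K_{(p,q)}$ with a common self-linking number (a function of $\operatorname{sl}(T_i)$ and the cabling slope). For $q/p$ sufficiently negative these two cable representatives cannot be amalgamated inside a larger solid torus, so their non-isotopy persists and contradicts transverse simplicity of $K_{(p,q)}$.

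For the forward implication, let $T$ be any transverse representative of $K_{(p,q)}$ and let $L$ be a Legendrian approximation of $T$. After an isotopy, $L$ can be placed as a Legendrian ruling on a convex torus $\Sigma$ that bounds a solid torus $V$ in the knot type of $K$, with the dividing slope of $\Sigma$ controlled by $\tb(L)$. By uniform thickness, $V$ thickens to a standard neighborhood $N(L_0)$ of some Legendrian representative $L_0$ of $K$; the ``sufficiently negative'' hypothesis on $q/p$ keeps the cable slope far below $\omega(K) = \tbb(K)$ and makes this thickening consistent. Inside $N(L_0)$ the Etnyre--Honda classification of Legendrian curves on convex tori shows that $L$ is determined, up to Legendrian isotopy together with added negative stabilizations, by $L_0$ together with the topological cable data. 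Passing to transverse knots collapses the negative stabilizations, so $T$ is determined up to transverse isotopy by the transverse pushoff $T_0$ of $L_0$ and the pair $(p,q)$. Transverse simplicity of $K$ then identifies $T_0$ by $\operatorname{sl}(T_0)$, which is in turn recoverable from $\operatorname{sl}(T)$ and $(p,q)$, so $T$ is determined by its self-linking number, giving transverse simplicity of $K_{(p,q)}$.

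The main obstacle is ambiguity in the choice of $L_0$ produced by the uniform-thickness step. A priori, different thickenings of $V$ could produce Legendrian representatives of $K$ that differ by both positive and negative stabilization, and only negative stabilizations collapse when passing to transverse knots. The forward direction therefore reduces to showing that, under uniform thickness and for $q/p$ sufficiently negative, any two such choices of $L_0$ differ only by negative stabilization. This is the technical heart of the argument and is precisely where the negative-slope hypothesis does real work: a sufficiently negative cable slope prevents the cable from detecting positive stabilizations of the underlying Legendrian knot. Making ``sufficiently negative'' quantitatively precise should fall out of the convex-surface bookkeeping once this reduction is in place.
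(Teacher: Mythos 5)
There is a genuine gap, and it sits exactly where you flag it: both directions of your argument defer the step that carries all the content. In the reverse direction, the assertion that the two cabled transverse representatives ``cannot be amalgamated inside a larger solid torus, so their non-isotopy persists'' is precisely the statement that needs proof, not an observation. In the paper this persistence is extracted from the negative-cable classification (Theorem~\ref{classification-negative}): if $K$ is not transversely simple, one gets, for every slope $s<\tb(L)$, two solid tori with convex boundary of slope $s$ and equal relative Euler class that are not contact isotopic (neighborhoods of $S_-^m(L)$ and $S_-^m(L')$), and the theorem guarantees the corresponding standard Legendrian cables are distinct and \emph{remain} distinct under arbitrarily many negative stabilizations, hence have non-isotopic transverse push-offs with equal self-linking. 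Without that input (or the uniformly thick cable classification of \cite{EH04}), nothing in your sketch rules out the cables becoming isotopic after sliding to a common larger torus.

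In the forward direction the problem is worse than a deferral: the reduction you propose is not the right statement. You claim the negative-slope hypothesis ``prevents the cable from detecting positive stabilizations of the underlying Legendrian knot,'' so that any two choices of $L_0$ differ only by negative stabilization. No degree of negativity of $q/p$ can do this on its own --- if it could, the reverse direction of the theorem would be false, since non-isotopic Legendrian representatives with equal invariants would then give isotopic cables. What actually makes the forward direction work is transverse simplicity of $K$ itself, used well before the final identification of $T_0$ by $sl$: by Theorem~\ref{traniso} there is an $m$ so that the negative stabilizations $S_-^m(L_i)$ of the finitely many maximal-$\tb$ representatives are distinguished by rotation number, and a further $m'$ so that appropriate $m'$-fold (mixed) stabilizations coalesce. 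Setting $l=\tbb(K)-m-m'$, every solid torus representing $K$ with dividing slope at most $l$ is determined by the relative Euler class of its complement, and all thinner tori enlarge to these; Theorem~\ref{classification-negative} (whose hypothesis of $q/p$-minimal thickenability is supplied by uniform thickness) then shows $K_{(p,q)}$ is Legendrian, hence transversely, simple for $q/p<l$. This is also where ``sufficiently negative'' acquires its quantitative meaning, which your sketch leaves to unspecified ``convex-surface bookkeeping.'' So the architecture of your outline is reasonable, but the two steps you leave open are the theorem, and the heuristic you propose for closing the forward direction points in the wrong direction.
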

However, one can have a Legendrian non-simple knot type whose sufficiently negative cable is Legendrian simple. We call a knot type $K$ \dfn{partially uniformly thick} if there is an integer $n$ such that any solid torus representing $K$ with convex boundary having dividing slope less than $n$ will thicken to a solid torus with convex boundary having two dividing curves of slope $n$. 
\begin{theorem}\label{tm3}
  Suppose $K$ is a partially uniform thick knot in $(S^3,\xi_{std})$. If $K$ is transversely simple, then for $q/p$ sufficiently negative  $K_{(p,q)}$ is Legendrian simple. 
\end{theorem}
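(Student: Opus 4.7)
The plan is to leverage the partial uniform thickness property to place every Legendrian representative of $K_{(p,q)}$ as a Legendrian ruling on the boundary of a sufficiently thickened solid torus representing $K$, and thereby reduce the classification of Legendrian cables to the classification of transverse representatives of $K$.

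First, for $q/p$ sufficiently negative (specifically $q/p<n$, with $n$ the integer from the definition of partial uniform thickness), I would show that any Legendrian representative $L$ of $K_{(p,q)}$ sits on the convex boundary of a solid torus $N$ representing $K$ whose two dividing curves have slope $n$. Starting from a standard neighborhood of $L$ together with the cabling solid torus, the partial uniform thickness hypothesis allows me to thicken the ambient solid torus until $\partial N$ has dividing slope $n$; since $q/p<n$, a Legendrian realization argument then places $L$ as a $(p,q)$-ruling on $\partial N$. Shrinking $N$ back to a standard neighborhood of a Legendrian representative $K_0$ of $K$ and tracking the bypasses that peel off from $\partial N$ expresses $L$ canonically in terms of (i) a Legendrian representative $K_0$ of $K$, (ii) the cable parameters $p,q$, and (iii) discrete stabilization data. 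Standard convex surface calculations then give $\tb(L)$ and $\rot(L)$ in terms of $\tb(K_0)$, $\rot(K_0)$, $p$, $q$, and the stabilization data.

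The central translation step is to observe that when $N$ is shrunk and bypasses are absorbed into the cable, a negative stabilization of $K_0$ inside $N$ can be traded for additional twisting of the ruling $L$ without changing the Legendrian isotopy class of $L$. Consequently, two Legendrian representatives of $K$ that differ by negative stabilizations produce Legendrian-isotopic cables. By the standard correspondence between transverse knots and Legendrian knots modulo negative stabilization, the Legendrian cable $L$ is therefore determined by a transverse representative of $K$ together with data recoverable from $\tb(L)$ and $\rot(L)$. Transverse simplicity of $K$ then asserts that the transverse representative of $K$ is determined by its self-linking number, which is itself visible from $\tb(L)$ and $\rot(L)$, so $L$ is determined up to Legendrian isotopy by its classical invariants. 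This gives Legendrian simplicity of $K_{(p,q)}$.

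The main obstacle I anticipate is the first step: rigorously showing that every Legendrian representative of $K_{(p,q)}$ can indeed be isotoped onto the boundary of a solid torus with dividing slope exactly $n$, and that the shrinking procedure yields a well-defined pair (Legendrian $K_0$, discrete data) up to the negative-stabilization equivalence. This requires a careful bypass analysis and uniqueness results for tight contact structures on solid tori à la Honda. The partial uniform thickness hypothesis (as opposed to the full uniform thickness used in Theorem~\ref{tm2}) is precisely what makes the argument go through for sufficiently negative slopes only: below $n$ we control all thickenings, while above $n$ we do not, which is why the conclusion is restricted to $q/p$ sufficiently negative and why the weaker hypothesis yields the weaker conclusion of Legendrian rather than transverse simplicity of the cable.
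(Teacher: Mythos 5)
Your key translation step fails for negative cables: stabilizing the companion is not a move that leaves the cable unchanged. If $K_0$ is a Legendrian representative of $K$ and $L_0$ is a $q/p$-ruling curve on the boundary of a standard neighborhood of $K_0$, then the corresponding ruling curve for $S_-(K_0)$ is a $p$-fold stabilization of a definite sign of $L_0$: its Thurston--Bennequin invariant drops by $p$ and its rotation number changes by the amount dictated by Lemma~\ref{negative-tbrot}. So a negative stabilization of $K_0$ cannot be ``traded for additional twisting of the ruling without changing the Legendrian isotopy class of $L$,'' and your conclusion that a Legendrian cable is determined by the transverse push-off of its companion together with $\tb$ and $\rot$ does not follow. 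This is exactly where the difficulty of the theorem lies: since $K$ need not be Legendrian simple, there are non-isotopic Legendrian representatives of $K$ with equal classical invariants, their $q/p$-rulings have equal classical invariants, and by Lemma~\ref{negative-nonisotopic} these rulings are \emph{not} Legendrian isotopic whenever the corresponding solid tori fail to be contact isotopic. Transverse simplicity only says the companions admit a common negative stabilization, which translates into the cables admitting a common \emph{further stabilization}, not into an isotopy of the cables themselves. (A smaller problem: not every $L\in\mathcal{L}(K_{(p,q)})$ can be realized as a ruling curve on a slope-$n$ torus --- such a ruling has the fixed value $\tb=pq-|n\bigcdot q/p|$, whereas $L$ may be a Legendrian divide with $\tb=pq$ or even a Legendrian large cable with $\tb>pq$; the correct statement is that $L$ destabilizes to a standard cable, as in Lemma~\ref{destabtostd}.)

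Relatedly, the threshold $q/p<n$ is not the right meaning of ``sufficiently negative,'' and nothing in your sketch produces the correct one. The paper's proof (leaning on the proof of Theorem~\ref{tm2}) sets $l=\tbb(K)-m-m'$, where $m$ is chosen, via transverse simplicity and Theorem~\ref{traniso}, so that the $m$-fold negative stabilizations of the maximal-$\tb$ representatives are determined by their rotation numbers, and $m'$ is a further stable-isotopy depth after which all remaining branches with matching invariants have merged. Below the level $l$ every Legendrian representative of $K$ at that $\tb$ is determined by its rotation number, hence the solid tori with two dividing curves of that slope are classified by the rotation numbers of their cores and all thinner tori enlarge to them; only then does Theorem~\ref{classification-negative}, applied for $q/p<\min\{n,l\}$ (partial uniform thickness is what supplies the $q/p$-minimal thickenability hypothesis there), give Legendrian simplicity of $K_{(p,q)}$. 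Your argument never engages with this depth: for slopes between $l$ and $n$ the failure of Legendrian simplicity of $K$ has no reason to disappear in the cable, since the distinct companion representatives have not yet merged, so a proof that only assumes $q/p<n$ cannot work as stated.
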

\begin{example}
In the proof of Theorem~4.1 in \cite{EtnyreHonda03}, it is shown that many connected sums of negative torus knots are not Legendrian simple. However, in \cite{EH04} it was shown that negative torus knots (Theorem~1.2) and their connected sums (Theorem~1.4) are uniformly thick. Finally, Theorem~4.6 in \cite{EtnyreHonda03} says that the connected sum of negative torus knots is transversely simple. Thus the above theorem shows that while these connected sums are not Legendrian simple, sufficiently negative cables of them will be. 
\end{example}
\begin{example}
Item~(3) about cables above can be seen by considering $(p,q)$-cables of positive $(r,s)$-torus knots with $q/p\in (0,rs-r-s)$. It was shown in \cite{EH04} for one such cable of the trefoil and in \cite{ELT12} for many cables of any positive torus knot, that they are not Legendrian simple even though the underlying knot is \cite{EtnyreHonda01b}.
\end{example}

\subsection{Classification of positive cables}
Given a knot type $K$, let $\mathcal{L}(K)$ denote the Legendrian isotopy classes of Legendrian knots in the knot type $K$. 
The \dfn{mountain range of $K$} is 
\begin{enumerate}
  \item the image $M$ of the map
  \[
  \mathcal{L}(K)\to \Z\times \Z: L \mapsto (\rot(L),\tb(L)),
  \]
  \item for each lattice point $(a,b)\in M$, a list of $L$ mapping to that point, and
  \item arrows indicating where an element in the mountain range maps under positive and negative stabilization. 
\end{enumerate}

Given a point $(a,b)$ in the integer lattice $\Z^2$ we define the $(p,q)$ \dfn{diamond of $(a,b)$} to be the points
\begin{align*}
  D_{(p,q)}(a,b) = \{ (r,t) : \,\, &t + |r - pa| \leq pq- |pb-q|,\\ & t + |r- pa| \geq pq- |pb-q| -2p +2,\\ & t+r \text{ odd}  \}.
\end{align*}

\begin{figure}[htb]{\tiny
\begin{overpic}
{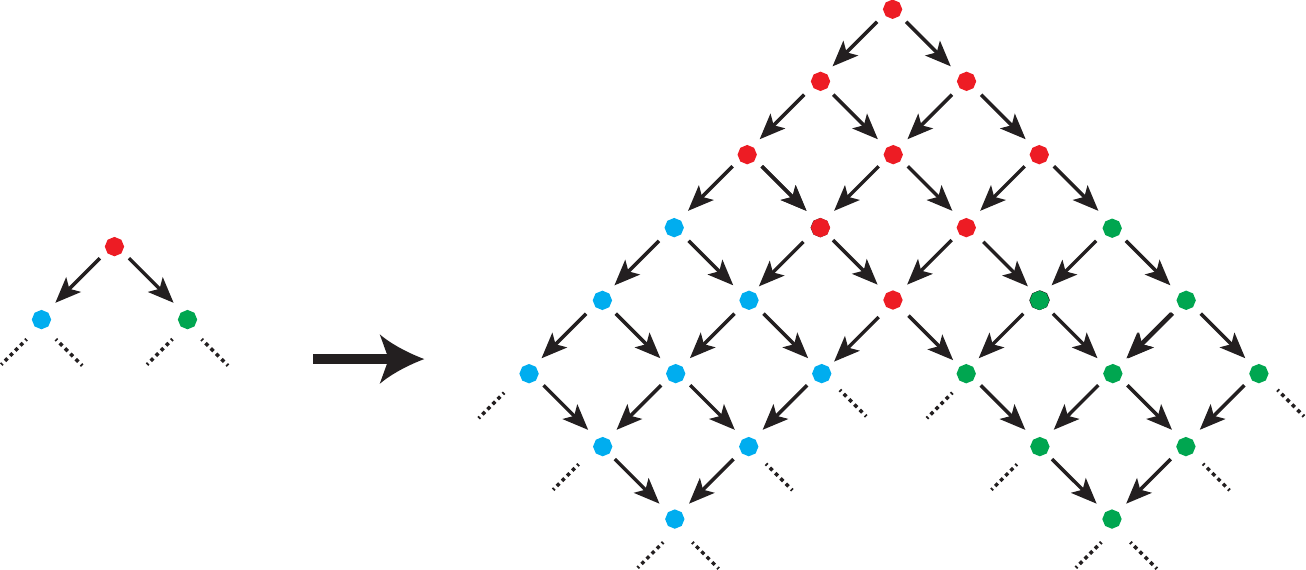}
\put(40, 94){$(a,b)$}
\put(60, 72){$(a+1,b-1)$}
\put(-38, 72){$(a-1,b-1)$}

\put(265, 162){$(pa,pq-|pb-q|)$}
\end{overpic}}
\caption{On the left are three lattice points in $\Z^2$. On the right is the $(p,q)$ diamond of these lattice points when $p=3$. The peaks of the diamonds are at $(pa,pq-|pb-q|)$, $(pa-p,pq-|pb-p-q|)$, and $(pa+p,pq-|pb-p-q|)$.}
\label{diamondex}
\end{figure}

See Figure~\ref{diamondex}. We may describe this diamond in a different way. We first describe stabilization of points $(a,b)\in \Z^2$ by $S_\pm(a,b)=(a\pm 1, b-1)$. Now $D_{(p,q)}(a,b)$ is the set of points obtained from the point $(pa, pq-|pb-q|)$ by between 0 and $p-1$ positive and 0 and $p-1$ negative stabilizations:
\[
D_{(p,q)}(a,b) = \{ S^k_+S^l_- (pa, pq-|pb-q|)) :  0\leq k,l, \leq p-1 \}.
\]
Given the points $M$ in a mountain range we define the \dfn{$(p,q)$ expansion of $M$} to be the set $M_{(p,q)}$ obtained from $M$ by replacing each point $(a,b)\in M$ with the diamond $D_{(p,q)}$:
\[
M_{(p,q)}= \bigcup_{(a,b)\in M} D_{(p,q)}(a,b).
\]
Notice that the diamonds in $M_{(p,q)}$ are disjoint. 

Given a Legendrian $L\in \mathcal{L}(K)$ we define its \dfn{$(p,q)$ diamond}, $D_{(p,q)}(L)$ as certain stabilizations of a ruling curve on its standard neighborhood. Specifically, given a Legendrian knot $L$ we will always denote by $L_{(p,q)}$ a ruling curve of slope $q/p$ on the boundary of a standard neighborhood of $L$ (see Section~\ref{kicm} for more on standard neighborhoods of Legendrian knots). With this notation we have that
\[
  D_{(p,q)}(L)= \{S^k_+S^l_- (L_{(p,q)}) :  0\leq k,l, \leq p-1\},
\]
where $S_\pm(L)$ is the positive/negative stabilization of $L$. 
Notice that the rotation numbers and Thurston-Bennequin invariants of the elements of $D_{(p,q)}(L)$ are precisely $D_{(p,q)}(\rot(L),\tb(L))$.

\begin{theorem} \label{classification-positive} 
Let $K$ be a knot in $(S^3,\xi_{std})$. If $q/p >  \lceil \omega(K) \rceil$ is not an integer, then 
  \[
    \mathcal{L}(K_{(p,q)}) = \bigcup_{L\in \mathcal{L}(K)} D_{(p,q)}(L).
  \]
  So each $L\in \mathcal{L}(K_{(p,q)})$ is associated to a unique knot $u(L)$ in $\mathcal{L}(K)$ such that $L\in D_{(p,q)}(u(L))$.  We call $u(L)$ the knot \dfn{underlying} $L$. Two knots in $ \mathcal{L}(K_{(p,q)})$ are Legendrian isotopic if and only if they have the same rotation numbers, Thurston-Bennequin invariants, and underlying knots. 

  In particular, the mountain range of $K_{(p,q)}$ is $M_{(p,q)}$ and any stabilizations of two distinct points in $M_{(p,q)}$ with the same invariants will stay distinct as long as the stabilization stays in the same $(p,q)$ diamond. If the $\pm$ stabilizations are in a distinct $(p,q)$ diamond, then they will be Legendrian isotopic if and only if their underlying knots $\pm$ stabilize to become Legendrian isotopic. 
\end{theorem}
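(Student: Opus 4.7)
The plan is to split the proof into two complementary statements: a surjectivity direction asserting that every Legendrian representative $L'$ of $K_{(p,q)}$ lies in some diamond $D_{(p,q)}(L)$ for a Legendrian $L\in\mathcal{L}(K)$, and an injectivity direction asserting both that the assignment $L'\mapsto u(L')$ is well-defined up to Legendrian isotopy and that, within a single diamond, the pair $(\rot,\tb)$ determines the Legendrian isotopy class. The cabling classification results in \cite{EH04,ELT12,Tosun13} suggest that the correct technical backbone is convex surface theory in the thickened torus between the standard neighborhood of $L'$ and a solid torus of knot type $K$.

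For the surjectivity direction I would take a standard convex neighborhood $N'$ of $L'$ with dividing slope $q/p$ and try to embed it in a solid torus $N$ representing the knot type $K$. The hypothesis $q/p>\lceil\omega(K)\rceil$ guarantees that no convex solid torus representing $K$ admits dividing slope $\geq q/p$, so after thickening $N'$ maximally inside $N$ the complement $N\setminus N'$ is a thickened torus to which I would apply a Farey tessellation/bypass analysis. The output is a Legendrian knot $L\in\mathcal{L}(K)$ with standard neighborhood $N(L)$ such that $L'$ sits as a (stabilized) Legendrian ruling curve of slope $q/p$ on $\partial N(L)$. An unstabilized ruling curve realizes the peak $(pa,pq-|pb-q|)$ of the diamond, and each bypass attached on the positive or negative side pushes $L'$ to a corresponding positive or negative stabilization, up to at most $p-1$ of each sign before $L'$ would necessarily come from a different underlying Legendrian. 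This stabilization count is exactly the shape of $D_{(p,q)}(L)$, and because $q/p$ is non-integer the ruling slope is uniquely realized, avoiding boundary-parallel ambiguities.

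For the injectivity direction I would first show that if $L_1',L_2'\in\mathcal{L}(K_{(p,q)})$ are Legendrian isotopic then their underlying knots $u(L_1')$ and $u(L_2')$ are Legendrian isotopic. An ambient contact isotopy carrying $L_1'$ to $L_2'$ can be extended to carry a standard neighborhood of $L_1'$ to one of $L_2'$, and the thickening procedure of the previous paragraph is canonical enough (given the obstruction $q/p>\lceil\omega(K)\rceil$) that the induced solid tori of type $K$ are ambient contact isotopic, yielding an isotopy of their cores. Second, within a fixed diamond $D_{(p,q)}(L)$, two ruling-curve-plus-stabilization representatives with the same $(\rot,\tb)$ are isotopic by the uniqueness of stabilization tracks inside a standard Legendrian neighborhood together with the fact that ruling curves of the same slope on a standard convex torus are contact-isotopic. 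The diamonds are disjoint in $\mathbb Z^2$, so classical invariants plus the underlying knot completely classify.

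The main obstacle will be the surjectivity step, specifically the rigorous bypass analysis in $N\setminus N'$ that both produces the underlying Legendrian $L$ and accounts for the exact stabilization budget of $2(p-1)$ encoded in the diamond. The hypothesis $q/p>\lceil\omega(K)\rceil$ enters crucially here to prevent any further thickening of $N'$ into a region where $L'$ would slip off the boundary of $N(L)$ altogether, and the non-integrality of $q/p$ is used to pin down the ruling curve. The well-definedness of $u(L')$ under isotopy is a subtler piece of the injectivity argument, since it requires showing that the underlying Legendrian produced by the maximal thickening is independent of choices up to Legendrian isotopy in $\mathcal{L}(K)$.
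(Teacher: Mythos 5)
Your overall architecture matches the paper's: put the cable on the boundary of a standard neighborhood of some $L\in\mathcal{L}(K)$, destabilize it to a ruling curve, and observe that the cone over $L_{(p,q)}$ decomposes into disjoint diamonds, so that classical invariants determine the class within a diamond. The surjectivity half of your sketch, while thin on details (and note that a standard neighborhood of the cable has dividing slope $\tb(L')$ in its own framing, not $q/p$), is essentially the paper's Lemmas on placing positive cables on convex tori and on cones versus diamonds.

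The genuine gap is the step you describe as ``the thickening procedure \dots is canonical enough \dots that the induced solid tori of type $K$ are ambient contact isotopic.'' This is precisely the hard point of the theorem, and it is not true that the thickening is canonical: the same cable $L'$ sits on the boundaries of standard neighborhoods of \emph{many} non-isotopic Legendrians in $\mathcal{L}(K)$ (all the destabilizations of $u(L')$ within its cone), so ``maximal thickening'' does not single out a torus. What must be proved is that if $L'$ lies in the diamonds of two Legendrians $L_1,L_2\in\mathcal{L}(K)$ with the same classical invariants, then $L_1$ and $L_2$ are Legendrian isotopic. The paper's proof of this requires substantial input that your proposal does not supply: a purely topological uniqueness statement for cabling tori (any two tori of knot type $K$ containing the cable are smoothly isotopic rel the cable, proved via the classification of essential annuli in knot complements and a Seifert-fibered argument, and using that $q/p\notin\Z$), followed by Colin's isotopy discretization to replace the smooth isotopy by a chain of bypass attachments disjoint from $L'$, and then an inductive control of the dividing slopes of the intermediate tori (they must stay in $[\tb(L_1),\infty)$, ruled out below that range by intersection-number estimates of slopes against $q/p$ and by disjointness of cones) showing the underlying Legendrian can only change by moves that force $L_1$ and $L_2$ to be isotopic. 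You flag this well-definedness as ``subtler'' at the end, but flagging it is not an argument; without these ingredients the injectivity direction, and hence the theorem, is not established. In particular the appeal to the contact isotopy extension theorem alone cannot work, since extending an isotopy of $L'$ says nothing about how the two a priori different solid tori of type $K$ containing $L'$ on their boundaries are related.
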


\begin{figure}[htb]{
\begin{overpic}
{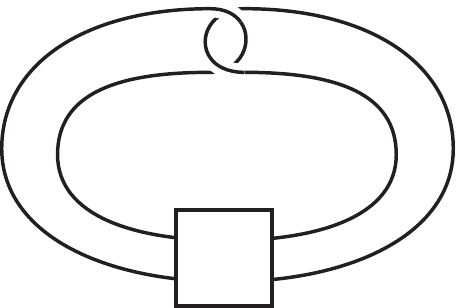}
\put(60, 11){$m$}
\end{overpic}}
\caption{The twist knot $K_m$, where in the box there are $m$ right handed half-twists.}
\label{twist-knots}
\end{figure}

The second author, Ng and V\'ertesi classified Legendrian twist knots in \cite{ENV13}. An immediate corollary to this and the above theorem is a classification of sufficiently positive cables (here, $q/p > \tbb(K)+1$) of twist knots. Such a result was inaccessible with previous work as some of the knot types are not Legendrian simple and not known to be uniformly thick. 

\begin{theorem}\label{classification-positive-twist}
  Let $K$ be the twist knot $K_m$, depicted in Figure~\ref{twist-knots}, with $m \neq -1$ half twists. 
  \begin{enumerate}
    \item If $m \geq -2$ even and $q/p> -m$, the knot type $K_{(p,q)}$ is Legendrian simple and there is a unique Legendrian knot in $\mathcal{L}(K_{(p,q)})$ with maximal Thurston Bennequin number $\tbb = pq-p(m+1)-q$ and rotation number $\rot=0$. See Figure~\ref{positive-cable-twist1}.
    \item If $m \geq 1$ odd and $q/p> -m-4$, the knot type $K_{(p,q)}$ is Legendrian simple and there are exactly two Legendrian knots in $\mathcal{L}(K_{(p,q)})$ with maximal Thurston Bennequin number $\tbb=pq-p(m+5)-q$ and rotation numbers $\rot=\pm p$. See  Figure~\ref{positive-cable-twist1}.
    \item If $m \leq -3$ odd and $q/p> -2$, the knot type $K_{(p,q)}$ is Legendrian non-simple and there are $-\frac{m+1}{2}$ Legendrian knots $L_i \in \mathcal{L}(K_{(p,q)})$, $i=1,...,-\frac{m+1}{2}$ with maximal Thurston Bennequin number $\tb = pq-3p-q$ and rotation number $\rot=0$. All other Legendrian knots destabilize to one of these knots and the $S_+^kS_-^l(L_i)$ are Legendrian isotopic if and only if  $k \geq p$ or $l \geq p$. See Figure~\ref{positive-cable-twist2}.
    \item If $m \leq -2$ even and $q/p > 2$, the knot type $K_{(p,q)}$ is Legendrian non-simple and there are $\lceil m^2/8 \rceil$ Legendrian knots $L_i \in \mathcal{L}(K_{(p,q)})$, $i=1,..., \lceil m^2/8 \rceil$ with maximal Thurston Bennequin number $\tb=pq-q+p$ and rotation number $\rot=0$. All other Legendrian knots destabilize to one of these knots. The $S_+^kS_-^l(L_i)$ fall into $\lceil -m/4 \rceil$ different Legendrian isotopy classes if $k \geq p, l < p$ or $k < p, l \geq p$, and $S_+^kS_-^l(L_i)$ all become Legendrian isotopic if $k \geq p$ and $l \geq p$. See Figure~\ref{positive-cable-twist2}. \hfill\qed
  \end{enumerate}
\end{theorem}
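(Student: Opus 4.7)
The plan is to combine Theorem~\ref{classification-positive} with the Legendrian classification of twist knots from \cite{ENV13}. In each of the four cases, one checks that the bound on $q/p$ is exactly $\tbb(K_m) + 1$, so by the remark following Theorem~\ref{tm1}, Theorem~\ref{classification-positive} identifies $\mathcal{L}(K_{(p,q)})$ with the diamond expansion $M_{(p,q)}$ of the mountain range $M$ of $K_m$. The theorem then reduces to reading off the peaks of $M_{(p,q)}$ and tracking the stabilization coalescence, using the corresponding data for $M$ itself as input.

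First I would extract from \cite{ENV13} the mountain range data for $K_m$: peak locations and multiplicities, and, when $K_m$ is not Legendrian simple, the pattern by which distinct max-$\tb$ representatives become Legendrian isotopic under positive and negative stabilization. Applying the peak formula $(pa, pq - |pb - q|)$ from the definition of $D_{(p,q)}(a,b)$ to each peak $(a, b)$ of $M$, with a sign computation using $q/p > \tbb(K_m) + 1$, yields the claimed values of $\tb$ and $\rot$ at the top in each case. The multiplicities $1$, $2$, $-(m+1)/2$, $\lceil m^2/8 \rceil$ follow immediately because distinct diamonds in $M_{(p,q)}$ are disjoint and each diamond contains a unique point at every $(\tb, \rot)$; so the number of representatives of $K_{(p,q)}$ at the peak equals the number for $K_m$.

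The main obstacle is bookkeeping the stabilization coalescence in the non-simple cases (3) and (4). Here I would invoke the last sentence of Theorem~\ref{classification-positive}: two knots $S^k_+ S^l_-(L_i)$ and $S^k_+ S^l_-(L_j)$ at a peak of $M_{(p,q)}$ with distinct underlying representatives $u(L_i), u(L_j) \in \mathcal{L}(K_m)$ are Legendrian isotopic precisely when $k \geq p$ or $l \geq p$ (so both knots have left their original diamonds) and the corresponding stabilizations of $u(L_i)$ and $u(L_j)$ agree in $\mathcal{L}(K_m)$. For case (3), the \cite{ENV13} classification shows that the $-(m+1)/2$ peak representatives of $K_m$ all merge after a single positive stabilization, and separately after a single negative stabilization; this yields the rule that the $S^k_+ S^l_-(L_i)$ in $K_{(p,q)}$ are all Legendrian isotopic exactly when $k \geq p$ or $l \geq p$. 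For case (4), the same translation applied to the \cite{ENV13} merging pattern for $K_m$ (into $\lceil -m/4 \rceil$ classes after a single stabilization of one sign, and fully after one stabilization of each sign) gives the stated cable coalescence into $\lceil -m/4 \rceil$ classes when only one of $k, l$ exceeds $p$ and into a single class when both do. Cases (1) and (2), where $K_m$ is already Legendrian simple, are then mechanical: there is no merging to track and the peak count and invariants determine the cable classification.
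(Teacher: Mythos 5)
Your proposal is correct and is essentially the paper's own argument: the paper treats this theorem as an immediate corollary (hence the \qed in the statement) of Theorem~\ref{classification-positive} combined with the Legendrian classification of twist knots in \cite{ENV13}, noting that the stated bounds on $q/p$ are exactly $\tbb(K_m)+1$, and then reading off the $(p,q)$ diamond expansion of the twist-knot mountain ranges, including the stabilization coalescence in the non-simple cases, just as you describe.
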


\begin{figure}[htb]{
\begin{overpic}
{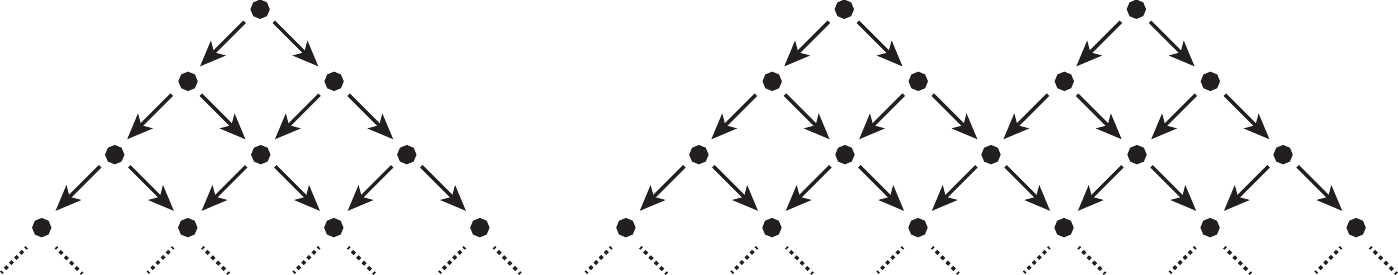}
\end{overpic}}
\caption{Mountain ranges for positive cables of the positive twist knots. On the left are $(p,q)$-cables of the positive twist knots with an even number $m$ of half twists and $q/p>-m$. On the right are $(p,q)$-cables of the positive twist knots with an odd number $m$ of half twists, $q/p>-m-4$ and $p=2$.}
\label{positive-cable-twist1}
\end{figure}

\begin{figure}[htb]{\tiny
\begin{overpic}
{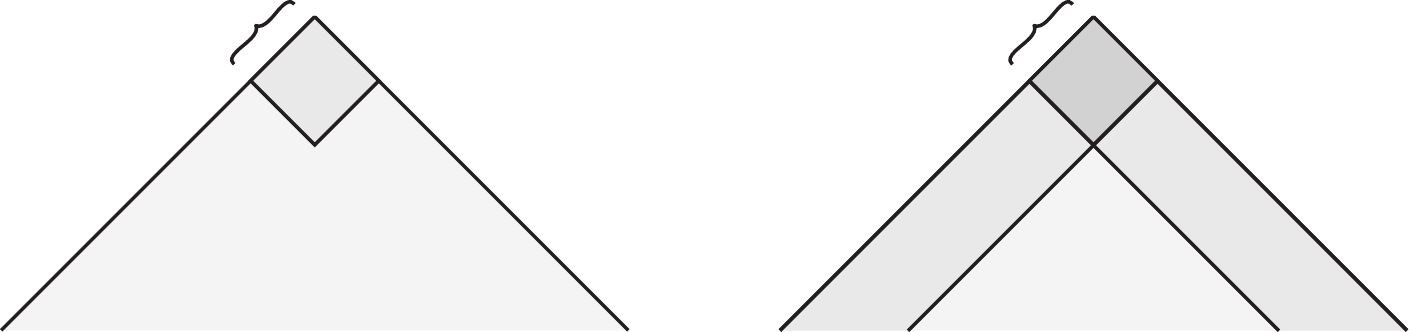}
\put(66, 91){$p$}
\put(291, 91){$p$}
\put(89, 30){$1$}
\put(79.5, 71){$-\frac{m+1}{2}$}
\put(306.5, 71){$\lceil \frac{m^2}{8} \rceil$}
\put(270, 35){$\lceil -\frac{m}{4} \rceil$}
\put(342, 35){$\lceil -\frac{m}{4} \rceil$}
\put(314, 10){$1$}
\end{overpic}}
\caption{Mountain ranges for positive cables of the negative twist knots. On the left are $(p,q)$-cables of the negative twist knots with an odd number $m$ of half twists and $q/p>-2$. On the right are $(p,q)$-cables of the negative twist knots with an even number $m$ of half twists and $q/p>2$.}
\label{positive-cable-twist2}
\end{figure}

Turning to transverse knots, let $\mathcal{T}(K)$ denote the transverse knots in the knot type $K$ up to transverse isotopy. For each $T\in \mathcal{T}(K)$ we define the \dfn{$(p,q)$ interval of $T$} as follows: choose a Legendrian approximation $L$ of $T$, and letting $I_{(p,q)}(T)$ be the set of transverse knots one obtains from the transverse push off of all the Legendrian knots in $D_{(p,q)}(L)$. We notice there will be exactly $p$ knots in $I_{(p,q)}(K)$ with self-linking numbers $p\, sl(T) + pq -q - 2k$ for $k=0,\ldots, p-1$.
\begin{theorem}\label{postransverse}
  If $q/p >  \lceil \omega(K) \rceil$ is not an integer, then
  \[
  \mathcal{T}(K_{(p,q)})= \bigcup_{T\in \mathcal{T}(K)} I_{(p,q)}(T).
  \] 
    So each $T\in \mathcal{T}(K_{(p,q)})$ is associated to a unique knot $u(T)$ in $\mathcal{T}(K)$ such that $T\in I_{(p,q)}(u(T))$. We call $u(T)$ the knot \dfn{underlying} $T$. Two knots in $ \mathcal{T}(K_{(p,q)})$ are transversely isotopic if and only if they have the same self-linking number and underlying knots. 
\end{theorem}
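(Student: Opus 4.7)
The plan is to derive the transverse classification from the Legendrian classification of Theorem~\ref{classification-positive} by systematically invoking the Legendrian--transverse correspondence: transverse knots up to transverse isotopy are in bijection with Legendrian knots modulo negative stabilization, implemented by transverse push-off and Legendrian approximation. Under this correspondence, two Legendrians in the same \emph{column} of a $(p,q)$ diamond $D_{(p,q)}(L)$ (meaning fixed number $k$ of positive stabilizations, varying number $l$ of negative stabilizations) have the same transverse push-off, and so the $p$ columns of a diamond produce exactly the $p$ transverse knots with self-linkings $p\,sl(T)+pq-q-2k$ appearing in the definition of $I_{(p,q)}(T)$.

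The first step is to verify that $I_{(p,q)}(T)$ is well-defined, independent of the chosen Legendrian approximation $L$ of $T$. Since any two Legendrian approximations of $T$ differ by negative stabilization, it suffices to compare $D_{(p,q)}(L)$ and $D_{(p,q)}(S_-L)$ after taking transverse push-offs. The central geometric input is the cabling--stabilization identity $(S_-L)_{(p,q)} \simeq S_-^p(L_{(p,q)})$, which follows from tracking slope $q/p$ ruling curves across a bypass attached to the convex boundary of a standard neighborhood of $L$. With this identity, $D_{(p,q)}(S_-L)$ consists of the Legendrians $S_+^k S_-^{l+p}(L_{(p,q)})$ for $0 \le k, l \le p-1$; these lie in the same $p$ columns as the elements of $D_{(p,q)}(L)$, are further negative stabilizations of them, and therefore have the same $p$ transverse push-offs.

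The decomposition $\mathcal{T}(K_{(p,q)}) = \bigcup_{T \in \mathcal{T}(K)} I_{(p,q)}(T)$ and the existence and well-definedness of the underlying knot $u(T')$ for each $T' \in \mathcal{T}(K_{(p,q)})$ then follow by passing to a Legendrian approximation $L'$, locating the unique underlying Legendrian $u(L') \in \mathcal{L}(K)$ supplied by Theorem~\ref{classification-positive}, and declaring $u(T')$ to be the transverse push-off of $u(L')$. Independence from the choice of $L'$ again uses the cabling--stabilization identity: replacing $L'$ by a negative stabilization shifts $u(L')$ only by negative stabilization in $\mathcal{L}(K)$, which leaves the transverse push-off unchanged.

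For the classification statement, suppose $T'_1, T'_2 \in \mathcal{T}(K_{(p,q)})$ share the same self-linking number and the same underlying knot $T$. Choose a common Legendrian approximation $L$ of $T$ and representatives $L'_i \in D_{(p,q)}(L)$ whose transverse push-offs are the $T'_i$. Equality of self-linking numbers forces the $L'_i$ to sit in the same column of the diamond, so they differ only in the number of negative stabilizations; Theorem~\ref{classification-positive} then gives that after further negative stabilization they are Legendrian isotopic, whence $T'_1 \simeq T'_2$ transversely. Conversely, differing self-linking numbers are immediately distinguished, and distinct underlying knots are separated by Theorem~\ref{classification-positive} applied to any Legendrian approximations. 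The main obstacle in this program is the cabling--stabilization identity $(S_-L)_{(p,q)} \simeq S_-^p(L_{(p,q)})$ itself; it is essentially a convex surface calculation and should be available from (or provable in parallel with) the convex surface analysis underlying Theorem~\ref{classification-positive}.
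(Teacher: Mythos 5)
Your proposal is correct and follows essentially the same route as the paper: the paper's proof simply combines Theorem~\ref{traniso} (transverse push-offs are isotopic if and only if the Legendrians admit a common negative stabilization) with the Legendrian classification of Theorem~\ref{classification-positive}, exactly the correspondence you implement. The cabling--stabilization identity $(S_-L)_{(p,q)}\simeq S_-^p(L_{(p,q)})$ that you flag as the main obstacle is already implicit in the paper's Lemma~\ref{cones}, where it is checked that $S_+^kS_-^l(L_{(p,q)})$ lies in $D_{(p,q)}(S_+^{\lfloor k/p\rfloor}S_-^{\lfloor l/p\rfloor}(L))$, so no additional input is required.
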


\subsection{Legendrian large cables}\label{introllc}
In \cite{Mccullough18}, McCullough defined a \dfn{Legendrian large cable} as a Legendrian knot $L \in \mathcal{L}(K_{(p,q)})$ with $\tb(L) > pq$. In \cite{Yasui16}, Yasui first introduced such knots. He showed that there is a $(n,-1)$-cable of the knots depicted in Figure~\ref{Yasui-knots} that has Thurston-Bennequin invariant $-1$. This was very surprising as the second author and Honda \cite{EH04} had shown for a uniform thick knot type $K$, one must have $\tbb(K_{(p,q)}) \leq pq$; and, moreover, after giving some evidence for the bound, Lidman and Sivek \cite{LidmanSivek16} conjectured the bound held for all cables.  The existence of Legendrian large cables is quite interesting since it implies there are Legendrian surgeries that produce reducible manifolds. See \cite{LidmanSivek16, Yasui16} for details. In \cite{Mccullough18}, McCullough also observed that any Legendrian large cable should be contained in a virtually overtwisted $T^2 \times I$. However, it was unclear which virtually overtwisted contact structures on $T^2 \times I$ contain Legendrian large cables. We will clarify this in the next theorem.
\begin{remark}
It is interesting to note that by Theorem~1.6 in \cite{LidmanSivek16} we know that $\mathcal{L}(K_{(p,q)})$ can contain Legendrian large cables only if $q/p<0$.
\end{remark}

A \dfn{length $2m$ balanced continued fraction block} is a length $2m$ continued fraction block with the same number of positive and negative basic slices. See Section~\ref{cfb} for more on continued fraction blocks. We say that the \dfn{center slope} of the balanced continued fraction block is the dividing slope of a convex torus between the $m^\text{th}$ and $(m+1)^\text{st}$ basic slices.

\begin{theorem}\label{llc-model} 
Let $(Y,\xi)$ be a tight contact manifold and $K$ a null-homologous knot in $Y$. 

\begin{enumerate}
  \item $\mathcal{L}(K_{(p,q)})$ contains a Legendrian large cable with $\tb=pq+m$ for $m > 0$ if and only if there is a neighborhood $S$ of $K$ that contains a neighborhood of $\partial S$ that is a length $2m$ balanced continued fraction block of which the center slope is $q/p$.\label{llc-model:existence}
  \item Suppose $(Y,\xi) = (S^3,\xi_{std})$, $L \in \mathcal{L}(K_{(p,q)})$ is a Legendrian large cable with $\tb = pq+m$ for $m > 0$ and $S$ is a neighborhood of $K$. Then there exists a unique length $2m$ balanced continued fraction block up to contact isotopy which is smoothly isotopic to a neighborhood of $\partial S$, has the center slope $q/p$ and contains $L$.\label{llc-model:unique}
\end{enumerate}
\end{theorem}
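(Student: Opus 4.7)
\emph{Strategy.} The plan is to analyze the contact geometry of the $T^2 \times I$ region near $\partial S$ using convex surface theory, bypass decompositions, and the Farey graph classification of tight contact structures on $T^2 \times I$. The guiding observation is that on a single convex torus with two parallel dividing curves of slope $s$, a Legendrian ruling curve of slope $q/p \neq s$ has $\tb$ equal to the torus framing $pq$ minus half its geometric intersection with the dividing set, so $\tb \leq pq$. Thus any Legendrian representative of $K_{(p,q)}$ with $\tb > pq$ must exploit nontrivial twisting across a multi-basic-slice $T^2 \times I$ block, and balanced continued fraction blocks are exactly the models that deliver such twisting.

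\emph{Backward direction of (1).} Given a length $2m$ balanced continued fraction block $N \subset S$ with center slope $q/p$, I construct the Legendrian knot explicitly. Write $N = T^2 \times [-m, m]$ so that $T^2 \times \{0\}$ has dividing slope $q/p$ and the $2m$ basic slices split into $m$ positive and $m$ negative. Choose a convex annulus $A$ going from $T^2 \times \{-m\}$ to $T^2 \times \{m\}$ whose Legendrian boundary consists of ruling curves of slope $q/p$ on the two boundary tori (whose dividing slopes are not $q/p$, so such rulings exist). The balanced sign arrangement forces the dividing set of $A$ to contain $m$ cancelling boundary-parallel arcs on each side of $A$, and applying the Legendrian realization principle to a suitable longitudinal arc in $A$, then destabilizing using these boundary-parallel arcs, produces a Legendrian knot $L$ of slope $q/p$ whose contact framing exceeds the torus framing $pq$ by exactly $m$. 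This construction is modeled on the Yasui and McCullough examples.

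\emph{Forward direction of (1).} Conversely, suppose $L \in \mathcal{L}(K_{(p,q)})$ satisfies $\tb(L) = pq + m$ and lies in $S$. Take a standard convex neighborhood $N(L)$ of $L$. The dividing set of $\partial N(L)$ has slope $0$ in the $(\mu_L, \lambda_L)$-coordinates where $\lambda_L$ is the contact longitude, which translates by an explicit change of basis (using that $\lambda_L$ differs from the Seifert framing by $\tb(L) = pq+m$ and that the Seifert framing differs from the $\partial S$-torus framing by $pq$) into a definite slope in the $(\mu, \lambda)$-coordinates on $\partial S$. Choose a convex torus $T' \subset S$ parallel to $\partial S$ with dividing slope $q/p$; the region between $\partial N(L)$ and $T'$ is a thickened torus admitting a basic-slice decomposition, corresponding to a path in the Farey graph. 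Because $\tb(L) - pq = m > 0$, this Farey path must cross the slope $q/p$, and counting edges forces exactly $2m$ basic slices. The main obstacle is showing the sign pattern is balanced: any imbalance would, after bypass rearrangement, allow $N(L)$ to be thickened inside a convex torus with fewer dividing curves, producing a ruling realizing $L$ with $\tb \leq pq$ and contradicting $\tb(L) = pq+m$. Thus the middle torus of the block has dividing slope $q/p$ and we have the desired balanced CFB.

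\emph{Uniqueness (2).} For part (2), invoke the classification of tight contact structures on $T^2 \times I$: such structures with fixed boundary dividing slopes and fixed Farey path are determined by the signs of the individual basic slices. Once the smooth isotopy class (a neighborhood of $\partial S$), the center slope $q/p$, the length $2m$, and the balanced condition are fixed, the only remaining datum is the ordering of $+$ and $-$ basic slices. The requirement that the block contain $L$ with $\tb(L) = pq + m$ pins down this ordering, because $L$ must sit as a ruling curve at a specific location in the stack where the $+$ and $-$ basic slices on either side combine to deliver exactly $+m$ units of extra contact twisting; any other sign order would either reduce this twist below $m$ or fail to include $L$. Global tightness of $(S^3, \xi_{std})$ ensures there is no further flexibility, and the classification yields uniqueness up to contact isotopy. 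The main combinatorial work lies in verifying that the constraint ``$L$ is contained and achieves $\tb = pq+m$'' truly forces the unique balanced ordering; this is where I expect the bulk of the technical argument to go.
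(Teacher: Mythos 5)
There are genuine gaps in all three parts of your sketch. In the backward direction of (1), the construction you describe cannot produce $\tb=pq+m$: any Legendrian curve obtained by realizing a closed curve on a convex annulus or torus of slope $q/p$, or by destabilizing a ruling curve on a boundary face of the block, has twisting at most $0$ relative to the $q/p$-framing, i.e.\ $\tb\leq pq$ (a ruling curve on a face of the normalized block starts at $pq-m$ and the $m$ available bypasses on one side of your annulus only bring it up to $pq$). The whole point of a Legendrian large cable is that it does not sit on any convex torus isotopic to the cabling torus, so it cannot be built this way. The paper instead exhibits an explicit model: the $(m+1,-1)$-cable of the unknot with $\tb=-1$ sitting in a neighborhood of the other Hopf component, shows that the resulting neighborhood-of-annulus-union-standard-neighborhood is a balanced length $2m$ block, and then transports this model into an arbitrary balanced block via the Giroux--Honda classification of tight structures on $T^2\times[0,1]$. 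In the forward direction, two steps fail as written: a convex torus $T'$ parallel to $\partial S$ with dividing slope $q/p$ is not known to exist a priori (its existence is essentially part of the conclusion), and the region between $\partial N(L)$ and $T'$ is not a thickened torus at all, since $\partial N(L)$ is the boundary of a neighborhood of the cable, not of a torus isotopic to $\partial S$. The correct move is to take $N(L)$ together with an $I$-invariant neighborhood of the complementary annulus $A$ of the cabling torus; after edge-rounding this is a $T^2\times[0,1]$. One must then rule out boundary-parallel dividing arcs on $A$ (tightness), rule out nonzero holonomy of the $2m$ traversing arcs --- this is the trickiest step in the paper, done by stabilizing $L$ once and analyzing slopes of the resulting length-two block inside a basic slice --- and prove balancedness by evaluating the relative Euler class on the $I$-invariant annulus (it vanishes, so the numbers of positive and negative basic slices agree). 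Your ``imbalance would allow thickening'' heuristic does not substitute for this computation.

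For part (2), the statement to prove is uniqueness up to \emph{contact isotopy of embedded blocks containing $L$}, not the determination of a sign ordering: by the classification, orderings of signs within a continued fraction block are related by shuffling and give isotopic contact structures anyway, so ``pinning down the ordering'' is both not the issue and partly vacuous. The actual argument in the paper shows first that any balanced block containing $L$ is contact isotopic to the one built from $N(L)$ and an annulus, and then compares two such blocks by Lemma~\ref{smooth-isotopy} (a smooth isotopy of the two annuli rel the neighborhood of the cable, which itself needs the essential-annulus analysis of Lemma~\ref{annuli}) combined with Colin's isotopy discretization, checking that every bypass in the discretized isotopy is trivial: a boundary-parallel dividing curve or a holonomy change would force an overtwisted disk in $(S^3,\xi_{std})$. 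Your sketch does not engage with this embedded, ambient-isotopy aspect, which is where the real content of Item~(2) lies.
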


We call the balanced continued fraction block in Item~(\ref{llc-model:unique}) of Theorem~\ref{llc-model} \dfn{the continued fraction block associated to $L$}. The above theorem identifies the Thurston-Bennequin invariant of a Legendrian large cable, and now we would like to know how to compute its rotation number. 

\begin{figure}[htb]{\tiny
\begin{overpic}
{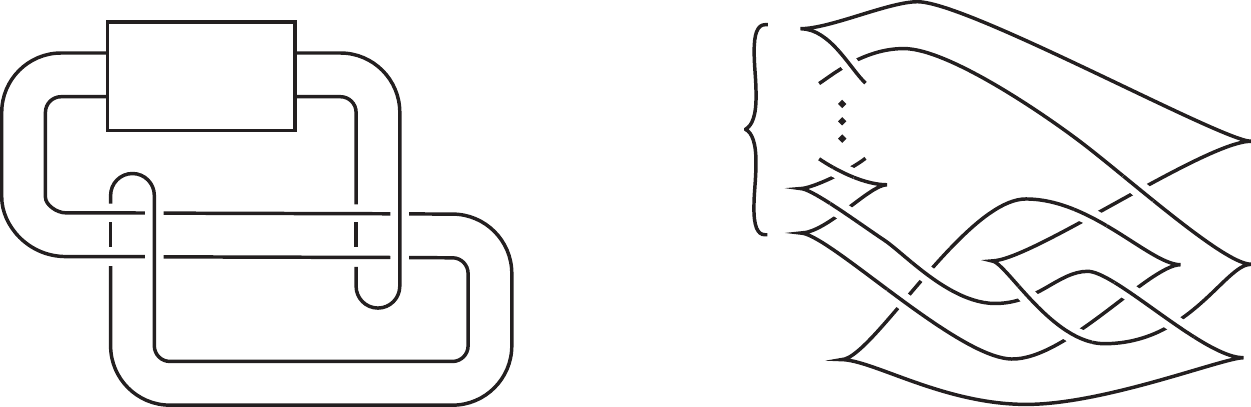}
\put(183, 83){$-m \text{ full}$}
\put(188, 75){twists}
\put(45, 92){\normalsize $m-1$}
\end{overpic}}
\caption{The knot $K_m$ on the left (the box represents $m-1$ full right handed twists). On the right a Legendrian representative of $K_{-m}$.}
\label{Yasui-knots}
\end{figure}

\begin{lemma}\label{llcrot}
Let $B$ be a balanced continued fraction block of length $2m$ with center slope $q/p$ and $L$ be the associated Legendrian large cable. The rotation number of $L$ agrees with the rotation number of any $q/p$ ruling curve on either the front or back face of $B$. 
\end{lemma}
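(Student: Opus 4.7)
The plan is to compare $\rot(L)$ to the rotation numbers of $q/p$ Legendrian ruling curves on the front and back faces of $B$ via convex annular cobordisms, and then exploit the Etnyre--Honda shuffling freedom for continued fraction blocks to force the resulting difference to vanish. Model $B$ as $T^2\times[0,2m]$ with convex tori $T_i=T^2\times\{i\}$ of dividing slope $s_i$ (so $s_m=q/p$) and basic slices $B_i=T^2\times[i-1,i]$ of sign $\epsilon_i\in\{\pm 1\}$, with $\sum_{i=1}^{2m}\epsilon_i=0$ by the balanced hypothesis. From the construction underlying Theorem~\ref{llc-model}, up to contact isotopy of $B$ the Legendrian large cable $L$ is the Legendrian divide on the center torus $T_m$; let $L_f$ and $L_b$ denote $q/p$ Legendrian ruling curves on $T_0$ and $T_{2m}$, obtained via Legendrian realization.

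A convex annulus $A_f$ in the slab $T^2\times[0,m]$ cobounds $L$ and $L_f$, and the difference $\rot(L)-\rot(L_f)$ is computed by pairing the relative Euler class of $\xi$ on that slab (together with the Legendrian framings at $\partial A_f$) against $A_f$. Since the relative Euler class of each basic slice $B_i$ is $\epsilon_i$ times a lattice vector determined by the slopes $s_{i-1}$ and $s_i$, this pairing decomposes linearly in the $\epsilon_i$:
\[
\rot(L)-\rot(L_f)=\sum_{i=1}^{m}\epsilon_i\,d_i,
\]
where each $d_i$ is a Farey-combinatorial quantity depending only on $(s_{i-1},s_i,q/p)$. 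An analogous decomposition holds for $\rot(L)-\rot(L_b)$ using an annulus in $T^2\times[m,2m]$.

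To conclude, invoke the Etnyre--Honda shuffling lemma: any permutation of the sign sequence $(\epsilon_1,\ldots,\epsilon_{2m})$ produces a block that is contact isotopic to $B$ rel $\partial B$, and the isotopy carries $L$ to a Legendrian-isotopic curve. Hence $\rot(L)$ and $\rot(L_f)$ are both invariant under shuffling, while the slopes $s_i$, and therefore the coefficients $d_i$, are preserved (the shuffled block has the same Farey path from $s_0$ to $s_{2m}$). For any $j\leq m$, the balanced condition and $m\geq 1$ let us swap $\epsilon_j$ with an opposite-sign entry $\epsilon_k$ at some $k>m$; such a swap changes the right-hand side by $\pm 2\,d_j$ while the left-hand side is unchanged. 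This forces $d_j=0$ for all $j\leq m$, so $\rot(L)=\rot(L_f)$, and the parallel argument yields $\rot(L)=\rot(L_b)$.

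The main obstacle is justifying the decomposition displayed above, in particular that the Euler-pairing contribution of a basic slice is linear in its sign and that the shuffling truly preserves the slope sequence of $B$ (so the $d_i$ do not change under shuffling). Both are technical consequences of Honda's convex surface theory and the classification of tight contact structures on $T^2\times I$; once granted, the shuffling-invariance step short-circuits any explicit computation of the $d_i$ and collapses each of them to zero.
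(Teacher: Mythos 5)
Your proposal breaks at the very first step: the Legendrian large cable $L$ is \emph{not} the Legendrian divide on the center torus $T_m$. A Legendrian divide on a convex torus of dividing slope $q/p$ has twisting $0$ relative to the cabling torus, hence $\tb = pq$, whereas $L$ has $\tb = pq+m$. In fact, by Theorem~\ref{stabilizellc} the divides on the slope-$q/p$ tori $T_i$ inside $B$ are exactly the $m$-fold stabilizations $S_+^iS_-^{m-i}(L)$, and these have \emph{different} rotation numbers for different $i$; so the curve you are computing with is not $L$, and its rotation number does not even agree with that of the ruling curves on the faces in general. Consistently with this, your final conclusion $d_j=0$ cannot be right: the coefficients in the Euler-class pairing are $d_j=|(s_j\ominus s_{j-1})\bigcdot q/p|>0$ (see the computation in Lemma~\ref{negative-tbrot}). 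The shuffling step that is supposed to force $d_j=0$ is also invalid: shuffling is an equivalence between abstract factorizations of the \emph{whole} block, and realizing it by an ambient isotopy moves the intermediate tori. You cannot swap a sign below a fixed embedded torus $T_m$ with a sign above it while keeping $T_m$ (and the curve on it) fixed, because the contact structure on the lower slab $T^2\times[0,m]$ is the restriction of $\xi$ and its relative Euler class is determined; so the identity $\rot(L)-\rot(L_f)=\sum_{i\le m}\epsilon_i d_i$ does not hold simultaneously for the original and the ``swapped'' sign sequence with the same left-hand side, and no contradiction is produced.

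The paper's proof is much more direct and avoids any Euler-class bookkeeping: $B$ is by construction the union of a standard neighborhood $N$ of $L$ and an $I$-invariant neighborhood of an annulus whose boundary consists of ruling curves on $\partial N$ of slope $pq$, i.e.\ of integer slope in the coordinates of $L$ itself. For integer-slope (longitudinal) ruling curves on a standard neighborhood one can trivialize $\xi$ along $N$ by extending a tangent vector field of $L$, so such a ruling curve has the same rotation number as $L$; and these same curves can be taken to be the ruling curves on the front and back faces of $B$, where in the coordinates of the underlying knot they have slope $q/p$. If you want to salvage your approach, the correct comparison is between the two faces: the pairing of the relative Euler class with a slope-$q/p$ annulus spanning \emph{all} of $B$ vanishes because the block is balanced, which shows the front and back ruling curves have equal rotation number, but relating either to $\rot(L)$ still requires an argument like the paper's neighborhood trivialization rather than a divide-on-$T_m$ identification.
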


The next theorem shows that there is a unique Legendrian large cable in a fixed balanced continued fraction block when the ambient manifold is $(S^3, \xi_{std})$.

\begin{theorem}\label{llc-isotopy}
  Two Legendrian large cables in $\mathcal{L}(K_{(p,q)})$ in $(S^3, \xi_{std})$ with $\tb = pq + m$ are Legendrian isotopic if and only if the two length $2m$ continued fraction blocks associated to the cables are contact isotopic. 
\end{theorem}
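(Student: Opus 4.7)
My plan is to prove the two directions separately. The forward direction will rely on contact isotopy extension combined with Theorem~\ref{llc-model}(\ref{llc-model:unique}), while the backward direction requires a separate internal uniqueness statement for Legendrian large cables inside a fixed block.

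\emph{$(\Rightarrow)$}. Suppose $L_0$ and $L_1$ are Legendrian isotopic. By the contact isotopy extension theorem, there is an ambient contact isotopy $\phi_t$ of $(S^3,\xi_{std})$ with $\phi_1(L_0)=L_1$. Let $B_0$ and $B_1$ be the continued fraction blocks associated to $L_0$ and $L_1$, respectively. Then $\phi_1(B_0)$ is a balanced length $2m$ continued fraction block with center slope $q/p$ that contains $L_1$ and is smoothly isotopic to a neighborhood of the boundary of a tubular neighborhood of the underlying knot $K$. Applying the uniqueness statement of Theorem~\ref{llc-model}(\ref{llc-model:unique}) to $L_1$ yields that $\phi_1(B_0)$ is contact isotopic to $B_1$, and hence so is $B_0$.

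\emph{$(\Leftarrow)$}. Suppose the associated blocks $B_0$ and $B_1$ are contact isotopic via an ambient contact isotopy $\psi_t$ of $S^3$. Then $\psi_1(L_0)\subset B_1$ is a Legendrian representative of the $(p,q)$-cable with $\tb=pq+m$, and by Lemma~\ref{llcrot} it has the same rotation number as $L_1$ (both equal to the rotation of any $q/p$ ruling curve on a face of $B_1$). Thus $\psi_1(L_0)$ and $L_1$ share all classical invariants and both lie in $B_1$. It then suffices to establish the following:

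\emph{Internal Uniqueness Lemma.} In a balanced length $2m$ continued fraction block $B$ with center slope $q/p$, any two Legendrian representatives of the $(p,q)$-class with $\tb=pq+m$ are Legendrian isotopic via a contact isotopy supported in $B$.

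The main obstacle is this lemma. My strategy, using convex surface theory, is to first show that any Legendrian $L\subset B$ in the $(p,q)$-class achieving the maximal $\tb=pq+m$ can, after a $C^\infty$-small perturbation, be placed as a Legendrian divide on a convex torus $T$ parallel to $\partial B$. A twisting count along the basic slices on either side of $T$, matching the construction underlying Theorem~\ref{llc-model}, forces $T$ to be the middle torus of $B$ with dividing slope $q/p$. Since Legendrian divides on such a convex torus form a single isotopy class on $T$, two such Legendrians can be made to coincide by a contact isotopy supported in a neighborhood of $T\subset B$. This proves the lemma and completes the backward direction.
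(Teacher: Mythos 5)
Your forward direction is fine and is essentially the paper's argument (ambient contact isotopy extension plus the uniqueness statement of Theorem~\ref{llc-model}~(2) applied to the two blocks containing $L_1$). The gap is in the backward direction, which you reduce to an unproved ``Internal Uniqueness Lemma,'' and the sketch you give for that lemma cannot work. A Legendrian large cable $L$ with $\tb(L)=pq+m$, $m>0$, has twisting $+m>0$ relative to the cabling torus, while a Legendrian curve lying on a convex surface always has non-positive twisting relative to the surface framing ($\tw(L,T)=-\tfrac12\,\#(L\cap\Gamma_T)\le 0$). So $L$ can never be placed on a convex torus parallel to $\partial B$ at all, let alone as a Legendrian divide on the middle slope-$q/p$ torus: a Legendrian divide there has $\tb=pq$, not $pq+m$. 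Indeed Theorem~\ref{stabilizellc} says it is the $m$-fold stabilizations $S_+^iS_-^{m-i}(L)$ that become Legendrian divides on the tori $T_i$, not $L$ itself. This is exactly the feature that makes Legendrian large cables delicate, and it defeats the ``twisting count'' you propose. Matching $\tb$ and $\rot$ (via Lemma~\ref{llcrot}) and lying in the same block does not by itself give a Legendrian isotopy, so as written the backward direction is not established.

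The paper sidesteps internal uniqueness entirely. From the contact isotopy of $B$ to $B'$ one gets, by isotopy extension, a contactomorphism $S^3\setminus B\cong S^3\setminus B'$; since $B\setminus N$ and $B'\setminus N'$ are $I$-invariant neighborhoods of convex annuli (this is the decomposition from the proof of Theorem~\ref{llc-model}), gluing them back produces a contactomorphism $S^3\setminus N\cong S^3\setminus N'$ of the complements of the standard neighborhoods of the two cables, hence a contactomorphism of $(S^3,\xi_{std})$ taking $L$ to $L'$; Eliashberg's result (Theorem~\ref{Legendrian-isotopy}) then upgrades this to a Legendrian isotopy. Note that this route is what makes the statement specific to $(S^3,\xi_{std})$ — as the remark after the theorem says, in a general tight manifold the ``if'' direction only yields a contactomorphism — whereas your proposed lemma, with its isotopy supported in $B$, would give Legendrian isotopy in any tight manifold; that discrepancy is a further sign that the lemma needs a genuine proof, which your convex-torus argument does not supply.
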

\begin{remark}
The proof of this theorem will show that the ``if" implication is true in any tight contact manifold, but in general only up to contactomorphism. 
\end{remark}

Suppose that $B$ is a balanced continued fraction block of length $2m$ with center slope $q/p$. Inside $B$ are two balanced continued fraction blocks $S_+(B)$ and $S_-(B)$ of length $2(m-1)$, where $B\setminus S_\pm(B)$ consists of two basic slices with the one containing the front face of $B$ being $\pm$ and the one containing the back face of $B$ being $\mp$. There are also $(m+1)$ convex tori $T_i$ with dividing slope $q/p$ where $T_i$ separates $B$ into two thickened tori with the one on the positive side of $T_i$ having $i$ positive basic slices. 
\begin{theorem}\label{stabilizellc}
Let $B$ be a balanced continued faction block of length $2m$ with center slope $q/p$ bounding a solid torus in the knot type $K$ in $(S^3, \xi_{std})$ and  $L$ be the associated Legendrian $(p,q)$-cable of $K$ with Thurston-Bennequin invariant $pq+m$. If $m>1$, then the stabilizations $S_\pm(L)$ of $L$ are the Legendrian large cables of $K$ associated to $S_\pm(B)$. The $m$-fold stabilization $S_+^i(S_-^{m-i}(L))$ of $L$ will be a Legendrian divide on one of the $T_i$.
\end{theorem}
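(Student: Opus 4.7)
The plan is to prove Part (1) first, then derive Part (2) by induction on $m$, in both cases using the uniqueness built into Theorem~\ref{llc-model}(\ref{llc-model:unique}) and the isotopy classification of Theorem~\ref{llc-isotopy}.

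For Part (1), observe first that $S_+(B)$ is, by construction, a length $2(m-1)$ balanced continued fraction block with center slope $q/p$ (removing one positive basic slice from the front face and one negative basic slice from the back face preserves both balance and the middle dividing slope). Theorem~\ref{llc-model}(\ref{llc-model:existence}) applied to $S_+(B)$ produces a Legendrian large cable $L' \in \mathcal{L}(K_{(p,q)})$ with $\tb(L') = pq + (m-1) = \tb(S_+(L))$ whose associated block is $S_+(B)$. By Theorem~\ref{llc-isotopy}, to conclude $S_+(L)$ is Legendrian isotopic to $L'$ it is enough to show that a continued fraction block associated to $S_+(L)$ via Theorem~\ref{llc-model}(\ref{llc-model:unique}) is contact isotopic to $S_+(B)$. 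I would do this by locating $L$ inside $B$ concretely (using the description from the proofs of Theorem~\ref{llc-model} and Lemma~\ref{llcrot}, where $L$ picks up one unit of twisting from each of the $m$ ``symmetric'' pairs of basic slices in $B$) and observing that a positive stabilization of $L$ can be performed entirely within the sub-block obtained by discarding the outermost positive slice at the front and the outermost negative slice at the back. That is, the $+$ basic slice at the front absorbs a positive stabilization destabilization, while the $-$ basic slice at the back absorbs the corresponding adjustment on the other side, leaving $S_+(L)$ as a Legendrian large cable inside $S_+(B)$. The case of $S_-(L)$ is symmetric.

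For Part (2) I would induct on $m$. The base case $m=1$ is a direct computation: $B$ has a single positive and a single negative basic slice, and the two tori $T_0$ and $T_1$ of slope $q/p$ border $B$ together with the central convex torus; the two possible single stabilizations of $L$ (one each of $S_+$ and $S_-$) each push $L$ onto a different one of these tori as a Legendrian divide, with $\tb=pq$. For the inductive step, applying $S_+$ to $L$ lands, by Part (1), in the length-$2(m-1)$ block $S_+(B)$, whose $m$ tori of slope $q/p$ are exactly $T_1,\dots,T_m$ of $B$ (removing a front-positive and back-negative slice kills $T_0$ but preserves and shifts the remaining tori). By induction, $S_+^{i-1}S_-^{m-i}(S_+(L)) = S_+^{i}S_-^{m-i}(L)$ is a Legendrian divide on the $(i-1)$st torus of $S_+(B)$, i.e., on $T_i \subset B$. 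The analogous application of $S_-$ first handles $i=0$, and iterating mixes of $S_\pm$ pin down the exact $T_i$.

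The main obstacle is the explicit identification of $L$ in $B$ needed to verify that positive/negative stabilization really does land in $S_\pm(B)$, and correspondingly the bookkeeping that matches the tori $T_i \subset B$ with the tori of the smaller blocks $S_\pm(B)$. Once that geometric picture — how one ``peels off'' a front-positive/back-negative (respectively front-negative/back-positive) basic slice and correspondingly shifts the $T_i$ indexing — is pinned down, everything else is formal from the uniqueness clauses in Theorems~\ref{llc-model} and~\ref{llc-isotopy}. A subtlety to watch for is whether the stabilization of $L$ can be performed in more than one contact-isotopy class inside $B$; the uniqueness of the associated block, together with Theorem~\ref{llc-isotopy}, rules this out in $(S^3,\xi_{std})$.
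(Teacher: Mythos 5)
Your high-level framework — produce the large cable of $S_\pm(B)$ via Item~(\ref{llc-model:existence}) of Theorem~\ref{llc-model} and then reduce everything to the uniqueness of the associated block (Item~(\ref{llc-model:unique})) together with Theorem~\ref{llc-isotopy} — is the same framework the paper uses, but the step you yourself flag as ``the main obstacle'' is exactly the mathematical content of the theorem, and your proposal does not supply it. Saying that ``the $+$ basic slice at the front absorbs a positive stabilization'' is a restatement of the desired conclusion, not an argument: the stabilization of $L$ is performed by a bypass on the boundary of a standard neighborhood $N$ of the cable $L$, while the basic slices of $B$ live in a neighborhood of the cabling torus of $K$, and there is no a priori dictionary between the sign of that bypass and the signs of the two outermost slices of $B$ that must be discarded. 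The paper fills this gap in two steps. First, it stabilizes $L$ inside $N$ to get $S_\pm(L)$ with standard neighborhood $N_\pm\subset N$, extends the cabling annulus $A$ to an annulus $A_\pm$ with boundary on $\partial N_\pm$, and observes (by the argument in the proof of Theorem~\ref{llc-model}) that $N_\pm$ together with an invariant neighborhood of $A_\pm$ is a balanced continued fraction block $B_\pm\subset B$ of length $2(m-1)$ centered at $q/p$; hence $B\setminus B_\pm$ is a pair of outermost basic slices of opposite sign and $B_\pm$ must be $S_+(B)$ or $S_-(B)$. Second — and this is the part your sketch omits entirely — it decides which is which by evaluating the relative Euler class of the outermost slice containing the front face on a slope $q/p$ annulus; this equals the difference of the rotation numbers of $q/p$ ruling curves on the two faces of that slice, which by Lemma~\ref{llcrot} is the difference between $\rot(L)$ and $\rot(S_\pm(L))$. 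Without a computation of this kind the first assertion of the theorem is not proved.

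Your treatment of the second assertion also rests on unproven claims. The base case $m=1$ is itself an instance of the statement being proved (and note that Part~(1) is unavailable there, since for $m=1$ the stabilizations are no longer Legendrian large), so it cannot be dismissed as ``a direct computation''; and the identification of the slope $q/p$ convex tori of $S_+(B)$ with $T_1,\dots,T_m$ of $B$ up to contact isotopy requires the classification of such tori inside a continued fraction block (shuffling of signs), which you do not carry out. The paper's argument is more direct and avoids the induction altogether: an $m$-fold stabilization $L'$ has $\tb(L')=pq$, so it can be placed as a Legendrian divide on a convex torus of slope $q/p$ inside $B$, which can be arranged to have only two dividing curves as in the proof of Lemma~\ref{negative-onboundary}; the classification of the tight structure on $B$ then gives a contactomorphism of $B$, hence of $S^3$, carrying this torus to one of the $T_i$, and Theorem~\ref{Legendrian-isotopy} upgrades the contactomorphism to a Legendrian isotopy. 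You should either adopt that route or supply the missing base case and the torus bookkeeping your induction needs.
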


Using Theorem~\ref{llc-model}, we can show there is a universal upper bound on the Thurston-Bennequin invariant of a cable. To state it we first need some notation. Let $\lfloor q/p \rfloor=a_0, ..., a_n=q/p$ be the shortest path in the Farey graph from $\lfloor q/p \rfloor$ clockwise to $q/p$. In the path  $\lfloor q/p \rfloor=a_0, ..., a_n=q/p$ let $k$ be the largest integer such that $a_{n-k}, \ldots, a_n$ is a continued fraction block. We call $k$ the \dfn{length of the tail of $q/p$}. 

\begin{theorem}\label{tbbound}
Let $q/p$ be a rational number that is not an integer. For any knot type $K$ if $q/p\leq \lceil\omega(K)\rceil$, then 
\[
\tb(L)\leq pq+k
\]
for all Legendrian knots $L$ in $\mathcal{L}(K_{(p,q)})$ where $k$ is the length of the tail of $q/p$. If $q/p>\lceil\omega(K)\rceil$, then 
\[
\tb(L)\leq pq+p\, \overline{tb}(K) -q
\]
for all Legendrian knots $L$ in $\mathcal{L}(K_{(p,q)})$.
\end{theorem}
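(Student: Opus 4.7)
The proof separates into two cases depending on the position of $q/p$ relative to $\lceil\omega(K)\rceil$.

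\textbf{Case $q/p > \lceil\omega(K)\rceil$.} By Theorem~\ref{classification-positive}, every $L \in \mathcal{L}(K_{(p,q)})$ lies in a diamond $D_{(p,q)}(L')$ for some $L' \in \mathcal{L}(K)$. From the definition of the diamond, the peak lies at $(p\rot(L'), pq - |p\tb(L')-q|)$. The hypothesis $q/p > \lceil\omega(K)\rceil \geq \overline{\tb}(K) \geq \tb(L')$ gives $q > p\tb(L')$, so $|p\tb(L')-q| = q - p\tb(L')$ and the peak $\tb$-value equals $pq + p\tb(L') - q$. Because $\tb$ is non-increasing under stabilization, the peak attains the maximum $\tb$ within the diamond, so maximizing over $L' \in \mathcal{L}(K)$ yields $\tb(L) \leq pq + p\,\overline{\tb}(K) - q$ for every $L \in \mathcal{L}(K_{(p,q)})$.

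\textbf{Case $q/p \leq \lceil\omega(K)\rceil$.} If $\tb(L) \leq pq$ the bound is immediate since $k \geq 0$. Otherwise $L$ is a Legendrian large cable with $\tb(L) = pq + m$, $m > 0$. By Theorem~\ref{llc-model}(\ref{llc-model:existence}) there is a solid torus neighborhood $S$ of $K$ whose collar of $\partial S$ is a length-$2m$ balanced continued fraction block $B$ with center slope $q/p$. Label the convex tori of $B$ as $T_0 = \partial S, T_1, \ldots, T_{2m}$ with dividing slopes $s_0, s_1, \ldots, s_{2m}$; all are Farey neighbors of a common pivot $v$ and $s_m = q/p$. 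Since both $T_0$ and $T_{2m}$ bound solid tori representing $K$, one has $s_0, s_{2m} \leq \omega(K)$. It remains to show $m \leq k$.

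The outer half-block $s_0, s_1, \ldots, s_m = q/p$ is a length-$m$ continued fraction block ending at $q/p$ with pivot $v$. My plan is to combine the geometric constraints $s_0, s_{2m} \leq \omega(K)$ with Honda's classification of tight contact structures on $T^2 \times I$ and the shuffling freedom of basic slices within a continued fraction block to force the slopes of $B$ to fit inside the tail of the shortest Farey path from $\lfloor q/p \rfloor$ to $q/p$. Reading off the length of that tail then gives $m \leq k$, so $\tb(L) = pq + m \leq pq + k$.

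\textbf{Main obstacle.} The hardest step is ruling out ``off-tail'' configurations. Combinatorially, there exist continued fraction blocks ending at $q/p$ of length exceeding $k$ if one allows arbitrary Farey-graph pivots, so the bound does not follow from Farey combinatorics alone. The proof must genuinely invoke the rigidity of tight contact structures on $(S^3,\xi_{std})$: any balanced continued fraction block embedded in a tight solid torus of knot type $K$ with outer slope $\leq \omega(K)$ must be compatible with the ambient tight structure, and I expect this compatibility to force the pivot $v$ to be the tail pivot determined by the shortest Farey path from $\lfloor q/p \rfloor$ to $q/p$. The precise verification will likely proceed by tracing the global Farey path from the dividing slope of $\partial S$ down to the meridian of the core of $K$ and showing that an off-tail pivot forces an intermediate convex torus with slope incompatible with tightness or with the $\omega(K)$ bound.
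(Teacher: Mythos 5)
Your treatment of the case $q/p>\lceil\omega(K)\rceil$ is correct and is the same as the paper's: by Theorem~\ref{classification-positive} every $L\in\mathcal{L}(K_{(p,q)})$ lies in a diamond, whose peak has $\tb = pq-|p\tb(L')-q|=pq+p\,\tb(L')-q$, and maximizing over $L'$ gives the bound. The problem is the other case, which is the actual content of the theorem. After invoking Theorem~\ref{llc-model} to obtain the balanced length $2m$ continued fraction block $B$ with center slope $q/p$ (the same starting point as the paper), you never prove the decisive inequality $m\leq k$: the passage beginning ``My plan is'' and the paragraph labelled ``Main obstacle'' are statements of intent (``I expect\ldots'', ``will likely proceed\ldots''), not an argument. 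Your own remark there is precisely the difficulty: there do exist continued fraction blocks of length greater than $k$ ending at $q/p$, namely fans around a pivot not forced by the shortest path (for $q/p=-12/5$, where $k=1$, the fan $-22/9,\,-17/7,\,-12/5$ around the pivot $-5/2$ is such a block of length $2$), and every tight solid torus with convex boundary of slope $q/p$ does contain nested convex tori realizing these off-path slopes. So nothing in the constraints you actually state ($s_0,s_{2m}\leq\omega(K)$, tightness, shuffling) rules these configurations out as written, and the bound $\tb(L)\leq pq+k$ is not established.

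For comparison, the paper closes this step by observing that the solid torus bounded by the center torus of $B$ has convex boundary of slope $q/p$ and contains half of the block as a collar, and then asserting that this half must form part of the tail of the shortest Farey path from $\lfloor q/p\rfloor$ to $q/p$, which gives $m\leq k$. Making that assertion precise is exactly where contact-geometric input beyond Farey combinatorics is needed (the classification of tight structures on the solid torus with boundary slope $q/p$, together with the sign freedom coming from balancedness: for $m\geq 2$ one can shuffle so the inner half has basic slices of both signs, while a collar whose slopes leave the shortest path has its signs forced). Your proposal identifies this as the hard point but supplies none of it, so the second half of the theorem remains unproved.
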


Theorem~\ref{llc-model} also allows us to obtain information about the neighborhood of a knot from the existence of Legendrian large cables. For example, we can improve \cite[Theorem~1.5~and~Proposition~1.7]{Mccullough18}.

\begin{theorem}\label{UTP-Yasui}
  Let $K_m$ be the knots depicted in Figure~\ref{Yasui-knots} with $m \leq -5$. Then they are not uniformly thick in $(S^3, \xi_{std})$ and $\omega(K_m) \geq -\frac{1}{2\left\lfloor \frac{3-m}{4} \right\rfloor - 1}$.
\end{theorem}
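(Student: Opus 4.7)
The plan is to extract a convex torus with large dividing slope inside a representative solid torus of $K_m$ from the geometry of a Legendrian large cable, using Theorem~\ref{llc-model}. Yasui's construction (refined in \cite{Mccullough18}) gives, for each $m \leq -5$ and each integer $n$ with $2 \leq n \leq \lfloor(3-m)/4\rfloor$, a Legendrian representative of $(K_m)_{(n,-1)}$ with $\tb = -1$. Since the cabling framing is $pq = -n \leq -2$, this is a Legendrian large cable with $\tb$-excess $M = n-1 \geq 1$, and applying Theorem~\ref{llc-model}(1) produces a solid torus neighborhood $S$ of $K_m$ in which a neighborhood of $\partial S$ is a length $2(n-1)$ balanced continued fraction block of center slope $-1/n$.

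The key geometric step is to identify the two boundary dividing slopes of this block. Its $2n-1$ slopes are consecutive points in the fan of a common Farey vertex, arranged symmetrically around $-1/n$. The only Farey neighbor of $-1/n$ whose fan extends at least $n-1$ vertices on each side of $-1/n$ is $0$, whose negative fan runs through $\ldots, -1, -\tfrac{1}{2}, -\tfrac{1}{3}, \ldots$ accumulating at $0$. Hence the slopes in the block are exactly $-1, -\tfrac{1}{2}, \ldots, -\tfrac{1}{2n-1}$, and the two boundary tori have dividing slopes $-1$ and $-\tfrac{1}{2n-1}$.

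Both boundary tori bound solid tori containing $K_m$: the outer torus is $\partial S$ itself, and the inner torus bounds a smaller solid torus $S' \subset S$ still containing $K_m$. Each boundary slope therefore provides a lower bound on $\omega(K_m)$, and the larger of the two gives
\[
\omega(K_m) \;\geq\; -\frac{1}{2n-1} \;=\; -\frac{1}{2\lfloor(3-m)/4\rfloor - 1}.
\]
Failure of the uniform thickness property follows immediately: once one verifies $\tbb(K_m) \leq -1$ (for instance by direct computation from a Legendrian diagram, or as used in \cite{Mccullough18}), the displayed bound gives $\omega(K_m) > \tbb(K_m)$, contradicting the equality $\omega = \tbb$ required by UTP. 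I expect the main subtlety to be the Farey-graph bookkeeping in the middle step: one must verify carefully that $0$ is forced as the common vertex of the continued fraction block (ruling out alternatives like $-1/(n\pm 1)$, whose fans are too short on one side of $-1/n$), and also confirm that Yasui's construction genuinely realizes Legendrian large cables throughout the claimed range $2 \leq n \leq \lfloor(3-m)/4\rfloor$.
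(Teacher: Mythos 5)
Your overall route is the same as the paper's: feed Yasui's computation $\tbb((K_m)_{(n,-1)})=-1$ for $n\le\lfloor(3-m)/4\rfloor$ into Theorem~\ref{llc-model} to obtain a balanced continued fraction block of length $2(n-1)$ with center slope $-1/n$, identify the largest dividing slope occurring among the tori of the block, and deduce the width bound. The genuine problem is your middle step. The claim that ``the only Farey neighbor of $-1/n$ whose fan extends at least $n-1$ vertices on each side of $-1/n$ is $0$'' is false: the fan of \emph{every} vertex of the Farey graph is bi-infinite (its neighbors are indexed by $\Z$ after a change of basis), so no neighbor of $-1/n$ is excluded by a ``fan too short'' argument. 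Concretely, take $c=-2/(2n+1)$ and $v_j=-(1+2j)/\bigl(n+j(2n+1)\bigr)$ for $|j|\le n-1$: consecutive $v_j$ are Farey-adjacent, every $v_j$ is adjacent to $c$, $v_0=-1/n$, and $|v_j\bigcdot(-1/n)|=|j|$, so this is a perfectly good abstract candidate for a length $2(n-1)$ continued fraction block with center slope $-1/n$; its largest vertex is $v_{-1}=-1/(n+1)$, which for $n>2$ is strictly smaller than $-1/(2n-1)$, so the stated width bound would not follow from such a block. In other words, the bare conclusion of Theorem~\ref{llc-model} (length and center slope) does not determine the boundary slopes of the block in the knot's framing coordinates, and your Farey bookkeeping does not force the common vertex to be $0$. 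To close this you have to go back into the proof of Theorem~\ref{llc-model}, where the block is built as $N\cup(A\times I)$ with $A$ an annulus of slope $-1/n$ in the cabling torus and the holonomy of the dividing arcs of $A$ is shown to vanish, and compute the boundary slopes in the original coordinates from that description (this is also the step the paper's very terse proof leaves implicit), or else supply some other geometric input that pins down the fan.

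A smaller point: for the failure of uniform thickness you do not need the unverified input $\tbb(K_m)\le-1$. As recalled in Section~\ref{introllc}, Etnyre--Honda proved that a uniformly thick knot type satisfies $\tbb(K_{(p,q)})\le pq$, so the existence of an $(n,-1)$-cable with $\tb=-1>-n=pq$ already contradicts uniform thickness, independently of any computation for $K_m$ itself.
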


In all known examples in $(S^3, \xi_{std})$ Legendrian large cables are always $(n,-1)$-cables and are cables of Lagrangian slice Legendrian knots. So we ask the following questions.
\begin{question}
If $\mathcal{L}(K_{(p,q)})$ contains a Legendrian knot with Thurston-Bennequin invariant greater than $pq$, is $(p,q)=(n,-1)$?
\end{question}
\begin{question}
If $\mathcal{L}(K_{(p,q)})$ contains a Legendrian knot with Thurston-Bennequin invariant greater than $pq$, is there a Legendrian knot in $\mathcal{L}(K)$ that is Lagrangian slice?
\end{question}

\subsection{Classification of negative cables}\label{negcablesec}
We say that a knot type $K$ is \dfn{$s$-minimally thickenable} if any solid torus whose core is in the knot type $K$ that has convex boundary with dividing slope $s$ thickens to a solid torus that has convex boundary with two dividing curves of the same dividing slope.

Let $\lfloor q/p \rfloor=a_0, ..., a_n=q/p$ be the shortest path in the Farey graph from $\lfloor q/p \rfloor$ clockwise to $q/p$, and set $a_i= \lfloor q/p \rfloor + i$ for $i<0$. Let $k$ be the length of the tail of $q/p$ as defined in the last section. Let $a_{n+i}$ be the points that continue the continued fraction block $a_{n-k}, \ldots, a_n$, for $i=1, \ldots, k$. 

\begin{example}
  Suppose $q/p = -12/5$. Then $n=2$, $a_0 = -3$, $a_1 = -5/2$ and $a_2 = -12/5$. Also, $k=1$ and $a_3 = -19/8$. See Figure~\ref{path}.
\end{example}

\begin{figure}[htb]{\tiny
\begin{overpic}[scale=2] 
{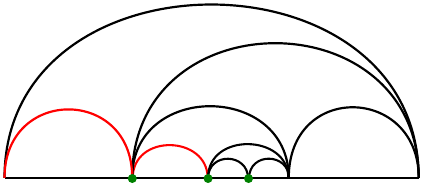}
\put(-5, -7){$-3$}
\put(68, -7){$-\frac{5}{2}$}
\put(110, -7){$-\frac{12}{5}$}
\put(133, -7){$-\frac{19}{8}$}
\put(158, -7){$-\frac{7}{3}$}
\put(235, -7){$-2$}
\end{overpic}}
\vspace{0.1in}
\caption{The clockwise shortest path (red) and the points of the continued fraction block (green) for $-12/5$ in the Farey graph.}
\label{path}
\end{figure}

\noindent
{\bf Tori realizing the knot type $K$:}
For $i< n$, let $d_i$ be the number of solid tori (up to contact isotopy) in the knot type $K$ that have convex boundary with two dividing curves of slope $a_i$ and are not contained in a solid torus that has convex boundary with dividing curves of slope $a_{i+1}$. Denote these tori by $N_i^j,$ for $j=1, \ldots, d_i$. 

For $i=n$, let $d_n$ be the number of solid tori (up to contact isotopy) in the knot type $K$ that have convex boundary with two dividing curves of slope $a_n$. Denote these tori by $N_n^j,$ for $j=1, \ldots, d_n$.

For $i=n+1, \ldots, n+k$, let $d_i$ be the number of solid tori (up to contact isotopy) in the knot type $K$ that have convex boundary with two dividing curves of slope $a_i$ and contain a length $2(i-n)$ balanced continued fraction block with center slope $q/p$ and this block does not thicken to a length $2(i+1-n)$ balanced continued fraction block. Denote these tori by $N_i^j,$ for $j=1, \ldots, d_i$.

\noindent
{\bf Standard Legendrian cables:} For $i<n$, let $L_i^j$ be a ruling curve of slope $q/p$ on the boundary of $N_i^j$. For $i=n$, let $L_n^j$ be a Legendrian divide on the boundary of $N_n^j$. For $i=n+1, \ldots, n+k$, let $L_i^j$ be the Legendrian large cable given by Theorem~\ref{llc-model} and the balanced continued fraction block in $N_i^j$ of length $2(i-n)$.

\noindent
{\bf Commensurating tori:} Given $N_i^j$ and $N_{i'}^{j'}$ we call a torus $N$ with convex boundary and two dividing curves of slope $a_m$ where $m<\min \{i, i'\}$ a \dfn{commensurating torus for $N_i^j$ and $N_{i'}^{j'}$} if, after contact isotopy, $N\subset N_i^j\cap N_{i'}^{j'}$. We say $N$ is a \dfn{maximal commensurating torus} if there is not another commensurating torus $N'$ having dividing slope larger than $N$ and with $N\subset N'\subset N_i^j\cap N_{i'}^{j'}$. Given $i,i', j, j'$ we denote by $C_{i, i' j, j'}$ the set of all maximal commensurating tori for $N_i^j$ and $N_{i'}^{j'}$.

\noindent
{\bf Super commensurating tori:} Given $N_n^j$ and $N_n^{j'}$ we call a torus $N$ with convex boundary and two dividing curves a \dfn{super commensurating tori for $N_n^j$ and $N_n^{j'}$} if, after contact isotopy, $N_n^j\cup N_n^{j'}\subset N$. We say $N$ is a \dfn{minimal super commensurating torus} if there is no other super commensurating tori $N'$ having dividing slope less than $N$ and with $N_n^j\cup N_n^{j'}\subset N'\subset N$. Given $j, j'$ we denote by $C'_{j,j'}$ the set of all minimal super commensurating tori for $N_n^j$ and $N_n^{j'}$.

We define the \dfn{cone of $L$} as 
\[
C(L)=\{S^k_+S^l_- (L) : \text{ for all } k \text{ and } l\}.
\]

In the statement of the theorem we use the notations $a/b\ominus c/d= (a-c)/(b-d)$ and $a/b\bigcdot c/d= ad-bc$. 

\begin{theorem}\label{classification-negative} 
Using the notation established above,  suppose $K$ is a $q/p$-minimally thickenable knot in $(S^3,\xi_{std})$. If $q/p < \omega(K)$ then
\begin{enumerate}
\item\label{neg1} All Legendrian knots in $\mathcal{L}(K_{(p,q)})$ destabilize to a standard Legendrian cable and all the standard Legendrian cables with $i\not= n$ do not destabilize. If $i=n$ then $L_n^j$ destabilizes if and only if $\partial N_n^j$ is contact isotopic to a torus in one of the $N_i^{j'}$ with $i>n$. 
\item\label{neg2} The standard cables $L_i^j$ and $L_{i'}^{j'}$ are Legendrian isotopic if and only if $i=i'$ and $j=j'$.
\item\label{neg3} The Thurston-Bennequin invariants of the standard Legendrian cables are
\[
\tb(L_i^j) = \begin{cases}
 pq - |a_i \bigcdot q/p| & i<n\\
pq& i=n\\
pq+(i-n) & i>n.
\end{cases}
\]
\item\label{neg4} If $i\leq 0$, the rotation number of $L_i^j$ is 
\[
  \rot(L_i^j) = p \rot(L)
\]
where $L \in \mathcal{L}(K)$ is a core of $N_i^j$. If $0\leq i\leq n$, then the rotation number of $L_i^j$ is determined as follows: Factor $N_i^j$ into $N \cup B_1 \cup \dots \cup B_i$ where each $B_l$ is a basic slice with dividing slopes $a_{l-1}$ and $a_l$. Let $\epsilon_l$ be the sign of the basic slice $B_l$ and $L \in \mathcal{L}(K)$ be a core of $N$. Then, 
\[
  \rot(L_i^j) = p \rot(L) - \sum_{l=1}^i {\epsilon_l} |(a_{l}\ominus a_{l-1})\bigcdot q/p|.
\]
If $i>n$, then the rotation number of $L_i^j$ is determined by computing the rotation number of a ruling curve of slope $q/p$ on $\partial N_i^j$.
\item\label{neg5} Given $i, i'\leq n$, then for each maximal commensurating torus $N\in C_{i,i',j,j'}$ with dividing slope $a_m$ we have 
    \[
      L = S_+^{k}S_-^{l}(L_i^j) = S_+^{k'}S_-^{l'}(L_{i'}^{j'}),
    \]
where $k+l = |(a_i \ominus a_m) \bigcdot q/p|$ and  $k'+l' = |(a_{i'} \ominus a_m)\bigcdot q/p|$.
Moreover, decompose $N_i^j \setminus N$ into basic slices $B_{m+1}, \ldots ,B_i$ where $B_u$ is a basic slice with dividing slopes $a_{u-1}$ and $a_{u}$. Now let $c_1, \ldots, c_y$ be a subsequence of indices $m+1,\ldots, i$ corresponding to positive basic slices. Then,
\[
  k = \sum_{u=1}^y |(a_{c_u} \ominus a_{c_u-1})\bigcdot q/p|.
\]
The $l$, $k'$ and $l'$ are determined in a similar way.
\item\label{neg6} Given $j,j'$, then for each minimal super commensurating torus $N\in C'_{j,j'}$ with dividing slope $s$ we have 
    \[
      L = S_+^{k}S_-^{l}(L_n^j) = S_+^{k'}S_-^{l'}(L_{n}^{j'}),
    \]
where $k+l = k'+l' = |s \bigcdot q/p|$. Moreover, decompose $N \setminus N_n^j$ into basic slices $B_1, \ldots ,B_m$ where $B_u$ is a basic slice with dividing slopes $b_{u-1}$ and $b_{u}$ such that $q/p=b_0,...,b_m=s$ is the shortest path in the Farey graph clockwise from $q/p$ to $s$. Now let $c_1, \ldots, c_y$ be a subsequence of indices $1,\ldots, m$ corresponding to negative basic slices. Then,
\[
  k = \sum_{u=1}^y |(b_{c_u}\ominus b_{c_u-1}) \bigcdot q/p|.
\]
The $l$, $k'$ and $l'$ are determined in a similar way.  
\item\label{neg7} Given $i,i'\leq n$, then the stabilizations $S_+^{k}S_-^{l}(L_i^j)$ and $S_+^{k'}S_-^{l'}(L_{i'}^{j'})$ remain distinct unless they are related by a sequence of equivalences from Items~\eqref{neg5} and~\eqref{neg6} (and further stabilizations). 
\item\label{neg8} Given $i>n$, then $N_i^j$ contains $(i-n)$ different $N_n^{j'}$ and each possible way of stabilizing $L^j_i$, $(i-n)$ times will give $L_n^{j'}$ for some $j'$.
\item\label{neg9} Given $i,i'>n$, then a stabilization of $L^j_i$ and $L^{j'}_{i'}$ that keeps their Thurston-Bennequin invariants above $pq$ will be Legendrian isotopic if and only if their associated balanced continued fraction blocks (from Theorem~\ref{llc-model}) are contact isotopic.
\item\label{neg10} The set of Legendrian knots $\mathcal{L}(K_{(p,q)})$ is 
  \[
        \bigcup_{i \leq n+k} C(L_i^j) / \sim,
    \] 
    where the equivalence relation $\sim$ is induced from \eqref{neg5}, \eqref{neg6}, \eqref{neg7}, \eqref{neg8}, and  \eqref{neg9}.
\end{enumerate}
\end{theorem}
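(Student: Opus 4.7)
\emph{Proof proposal.} The plan is to run the standard convex surface program for classifying Legendrian representatives of a cabled knot type, adapted to the negative slope regime and organized around the $q/p$-minimal thickenability hypothesis. Given any $L \in \mathcal{L}(K_{(p,q)})$, first take a standard neighborhood of $L$, isotope $L$ to a Legendrian ruling of slope $q/p$ on its boundary, and thicken to a solid torus $N$ in the knot type $K$ with convex boundary. The classification of tight contact structures on thickened tori and standard edge-rounding along the shortest clockwise path $a_0, \ldots, a_n$ from $\lfloor q/p \rfloor$ to $q/p$ in the Farey graph then either let us enlarge $N$ until its dividing slope is some $a_i$ with $i \leq n$ (so that $L$ becomes a ruling of slope $q/p$ on $\partial N_i^j$, or a Legendrian divide when $i = n$), or force $\tb(L) > pq$, in which case Theorem~\ref{llc-model} applies and $L$ is a Legendrian large cable associated to some $N_i^j$ with $i > n$. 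This destabilization, together with the maximality built into the definition of $N_i^j$, establishes Item~\eqref{neg1}; the second clause for $L_n^j$ is precisely the $q/p$-minimal thickenability criterion applied to the tori enclosing a Legendrian divide.

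The numerical assertions in Items~\eqref{neg2}, \eqref{neg3}, \eqref{neg4} are then bookkeeping. Distinctness of $L_i^j$ for different $(i,j)$ is built into the definitions of the $N_i^j$ (distinct dividing slopes when $i \neq i'$, and distinct contact isotopy types within a fixed $i$). The Thurston--Bennequin formula follows because a ruling of slope $q/p$ on a convex torus with dividing slope $a_i$ has geometric intersection $|a_i \bigcdot q/p|$ with the dividing set; the $i = n$ and $i > n$ cases are the Legendrian divide and Theorem~\ref{llc-model} respectively. The rotation number decomposes along a basic slice decomposition of $N_i^j$ using the relative Euler class, picking up a contribution $\epsilon_l |(a_l \ominus a_{l-1}) \bigcdot q/p|$ from each basic slice of sign $\epsilon_l$, which gives the stated formula.

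The substance of the theorem lies in the equivalence analysis of Items~\eqref{neg5}--\eqref{neg9}. The guiding principle is that two standard cables $L_i^j$ and $L_{i'}^{j'}$ with $i, i' \leq n$ become Legendrian isotopic after stabilization exactly when $\partial N_i^j$ and $\partial N_{i'}^{j'}$ can be contact isotoped around a common commensurating torus $N$. Destabilizing $L_i^j$ across the basic slices of $N_i^j \setminus N$ replaces $L_i^j$ by a ruling on $\partial N$; each basic slice $B_l$ of sign $\epsilon_l$ and slope pair $(a_{l-1}, a_l)$ contributes $|(a_l \ominus a_{l-1}) \bigcdot q/p|$ stabilizations of sign $\epsilon_l$. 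Summing over positive slices gives $k$ and over negative slices gives $l$, yielding Item~\eqref{neg5}. Item~\eqref{neg6} is dual, using minimal super-commensurating tori obtained by thickening $N_n^j$ past dividing slope $q/p$ along the opposite arc of the Farey graph; the signs of basic slices flip in the count because here we are thickening outward rather than destabilizing inward. The large cable cases drop out of earlier results: Item~\eqref{neg8} follows from Theorem~\ref{stabilizellc}, which identifies $(i-n)$-fold stabilizations of $L_i^j$ with Legendrian divides on the parallel slope-$q/p$ convex tori inside the associated balanced continued fraction block, and Item~\eqref{neg9} is Theorem~\ref{llc-isotopy}. Item~\eqref{neg10} is the assembled accounting statement.

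The main obstacle will be Item~\eqref{neg7}: establishing that no coincidences among stabilizations of standard cables occur beyond those forced by commensurating and super-commensurating tori. To prove it, one takes a contact isotopy realizing $S_+^k S_-^l(L_i^j) \simeq S_+^{k'} S_-^{l'}(L_{i'}^{j'})$ and promotes it, via transport of the standard convex tori enclosing each cable, to an ambient configuration furnishing a common commensurating (or super-commensurating) torus; the stabilization counts are then forced by the basic slice structure of the complementary thickened tori. The $q/p$-minimal thickenability hypothesis is essential here, as it prevents unexpected thickening past slope $q/p$ that could otherwise reconnect the stabilization cones in ways not captured by the commensurating torus framework, and Honda's uniqueness of sign distributions in basic slice decompositions converts the torus-level comparison into the precise numerical equalities among $k, l, k', l'$.
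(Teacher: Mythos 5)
Your overall architecture matches the paper's: destabilize an arbitrary cable onto a convex torus, identify the standard cables, compute invariants via basic slice decompositions and relative Euler classes, import the Legendrian large cable results for $i>n$, and reduce the hard uniqueness statement to a commensurating/super-commensurating torus analysis via isotopy discretization. However, there is a genuine gap where you declare Items~\eqref{neg1} (non-destabilizability) and~\eqref{neg2} (distinctness) to be ``built into the definitions'' of the $N_i^j$. They are not. A priori, two solid tori $N_i^j$ and $N_i^{j'}$ that are not contact isotopic could still carry Legendrian isotopic ruling curves, and a ruling curve $L_i^j$ could destabilize after being moved onto some other convex torus unrelated to $N_i^j$; nothing in the definition of the $N_i^j$ rules this out. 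The paper's proofs of these items (Lemmas~\ref{negative-nondestab} and~\ref{negative-nonisotopic}) require real work: one first uses Lemma~\ref{smooth-isotopy} to see that any two cabling tori containing the cable are smoothly isotopic fixing the cable, then runs Colin's isotopy discretization and an induction controlling the dividing slopes $s_l$ of the intermediate tori (using Remark~\ref{remarktori} and intersection estimates with the slope $q/p$) to show that a Legendrian isotopy of the cables would force either a thickening of $N_i^j$ past $a_{i+1}$ or a contact isotopy $\partial N_i^j \simeq \partial N_i^{j'}$, contradicting the definitions. This promotion of a Legendrian isotopy of cables to a contact isotopy of the bounding tori is the central technical mechanism of the negative-cable classification (it also underlies Item~\eqref{neg7} via Lemma~\ref{relation2}), and your proposal as written supplies it only for Item~\eqref{neg7}, not where it is equally needed.

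A second, smaller error: you assert that the destabilization criterion for $L_n^j$ in Item~\eqref{neg1} ``is precisely the $q/p$-minimal thickenability criterion.'' It is not; minimal thickenability is what lets one reduce to tori with two dividing curves in Lemmas~\ref{negative-onboundary} and~\ref{destabtostd}. The criterion for when $L_n^j$ destabilizes is that $\partial N_n^j$ lies inside a balanced continued fraction block, i.e., it comes from Theorem~\ref{stabilizellc}, which identifies exactly which Legendrian divides of slope-$q/p$ tori arise as $m$-fold stabilizations of Legendrian large cables. With these two points repaired, the rest of your outline (the computations in Items~\eqref{neg3} and~\eqref{neg4}, the bypass-counting across basic slices for Items~\eqref{neg5} and~\eqref{neg6}, and the reduction of Items~\eqref{neg8} and~\eqref{neg9} to Theorems~\ref{stabilizellc} and~\ref{llc-isotopy}) follows the paper's route.
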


\begin{remark}
We note that in all prior work studying cables, or more generally satellites, one needed some form of uniform (or partial uniform) thickenability for $K$ to obtain any classification result. However in this theorem we only need to know the $K$ is $q/p$-minimally thickenable, a significantly weaker condition. In addition, instead of needing to know all non-thickenable and partially thickenable tori in the knot type $K$, as was done in the past, we only need to know about certain tori with dividing slope $a_i$ that do not thicken to $a_{i+1}$. 
\end{remark}
\begin{remark}
Notice that the classification of transverse knots in the knot type $K_{(p,q)}$ will follow from the above theorem. 
\end{remark}
\begin{remark}
We note that almost all previous results about cables less than $\omega(K)$ follow from this theorem. For example both \cite{EH04, Tosun13} easily follow. Also the classification of cables of the positive trefoil from \cite{ELT12} follows. In \cite{ELT12} they also classify the cables of the $(r,s)$-torus knot with cabling slope $q/p\in (0, rs-r-s)$. All these results follow from the above theorem (and the understanding of tori from \cite{ELT12}) except for cabling slopes $(rs-r-s)/n$ where $n$ is not relatively prime to $rs-r-s$. It is not hard to see how to adapt Theorem~\ref{classification-negative} to this situation, but its statement would become even more unwieldy.
\end{remark}

If $\omega(K)$ is not an integer, we have an additional classification result.

\begin{theorem}\label{classification-in-between}
Suppose $K$ is a knot in $(S^3,\xi_{std})$ with $\omega(K)\leq q/p<\lceil \omega(K) \rceil$. Let $m$ be the maximum number such that $a_m \leq \omega(K)$. Notice that $m\leq n$. Then, 
\begin{enumerate}
\item\label{btw1} All Legendrian knots in $\mathcal{L}(K_{(p,q)})$ destabilize to a standard Legendrian cable and all the standard Legendrian cables do not destabilize. 
\item\label{btw2} The standard cables $L_i^j$ and $L_{i'}^{j'}$ are Legendrian isotopic if and only if $i=i'$ and $j=j'$. 
\item\label{btw3} The Thurston-Bennequin invariants of the standard Legendrian cables are
\[
\tb(L_i^j) =
  pq - |a_i \bigcdot q/p| 
\]
\item\label{btw4} The rotation number of $L_i^j$ is 
\[
  \rot(L_i^j) = p \rot(L)
\]
if $i\leq 0$. If $0\leq i\leq m$, then the rotation numbers of $L_i^j$ are determined as follows: Factor $N_i^j$ into $N \cup B_1 \cup \dots \cup B_i$ where each $B_l$ is a basic slice with dividing slopes $a_{l-1}$ and $a_l$. Let $\epsilon_l$ be the sign of the basic slice $B_l$ and $L \in \mathcal{L}(K)$ be a core of $N$. Then, 
\[
  \rot(L_i^j) = p \rot(L) - \sum_l{\epsilon_l} |(a_{l}\ominus a_{l-1})\bigcdot q/p|.
\]
\item\label{btw5} Given $i, i'\leq m$, then for each maximal commensurating torus $N\in C_{i,i',j,j'}$ with dividing slope $a_m$ we have 
\[
  L = S_+^{k}S_-^{l}(L_i^j) = S_+^{k'}S_-^{l'}(L_{i'}^{j'}),
\]
where $k+l = |(a_i \ominus a_m) \bigcdot q/p|$ and  $k'+l' = |(a_{i'} \ominus a_m) \bigcdot q/p|$. Moreover, decompose $N_i^j \setminus N$ into basic slices $B_{m+1}, \ldots ,B_i$ where $B_u$ is a basic slice with dividing slopes $a_{u-1}$ and $a_{u}$. Now let $c_1, \ldots, c_y$ be a subsequence of indices $m+1,\ldots, i$ corresponding to positive basic slices. Then,
\[
  k = \sum_{s=1}^y |(a_{c_u} \ominus a_{c_u-1})\bigcdot q/p|.
\]
The $l$, $k'$ and $l'$ are determined in a similar way.
\item\label{btw6}  The stabilizations $S_+^{k}S_-^{l}(L_i^j)$ and $S_+^{k'}S_-^{l'}(L_{i'}^{j'})$ remain distinct unless $(k,j)$ and $(k',j')$ come from (5), or a further stabilization of, the stabilizations from Item~\eqref{btw5}.
\item The mountain range of $\mathcal{L}(K_{(p,q)})$ is
\[
  \bigcup_{i \leq m} C(L_i^j) / \sim,
\] 
where the equivalence relation $\sim$ is induced from \eqref{btw5} and \eqref{btw6}. \label{btw7}
\end{enumerate}
\end{theorem}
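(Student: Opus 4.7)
The plan is to adapt the proof of Theorem~\ref{classification-negative} to the ``in-between'' regime $\omega(K)\leq q/p<\lceil\omega(K)\rceil$. The key simplifying observation is that since $q/p\geq\omega(K)$, no solid torus in the knot type $K$ admits a convex boundary with dividing slope $q/p$ or larger; in particular no balanced continued fraction block with center slope $q/p$ can exist in a neighborhood of $K$. By Theorem~\ref{llc-model}, this immediately rules out Legendrian large cables, and by the same dividing-slope restriction no Legendrian divides of slope $q/p$ appear either. Hence every $L\in\mathcal{L}(K_{(p,q)})$ must arise as a ruling curve of slope $q/p$ on the convex boundary of some solid torus in the knot type $K$ whose dividing slope lies in $\{a_i : i\leq m\}$.

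Given $L\in\mathcal{L}(K_{(p,q)})$, I place $L$ on the boundary of its standard neighborhood, thicken to a convex solid torus containing $L$ as a ruling curve, and then enlarge through basic slices along the clockwise Farey path $a_0,a_1,\ldots$ as far as possible. Since no thickening can exceed $\omega(K)$, this process terminates at some $N_i^j$ with $i\leq m$, giving the existence statement in \eqref{btw1}; non-destabilization of $L_i^j$ follows because $N_i^j$ by definition does not thicken to dividing slope $a_{i+1}$ when $i<m$, and cannot do so when $i=m$ because $a_{m+1}>\omega(K)$. Item \eqref{btw2} is immediate from the distinctness of the $N_i^j$ as solid tori up to contact isotopy. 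The Thurston-Bennequin formula \eqref{btw3} is the standard computation comparing the contact framing on a ruling curve of slope $q/p$ on a torus with dividing slope $a_i$ (differing by $-|a_i\bigcdot q/p|$) to the product framing (differing from the Seifert framing by $pq$). For \eqref{btw4}, the case $i\leq 0$ is the standard calculation for a ruling on a standard neighborhood of a Legendrian $L\in\mathcal{L}(K)$, and the general case follows by induction on $i$, tracking the contribution of each basic slice $B_l$ to the relative Euler class against the class of the ruling curve, which gives the $\pm|(a_l\ominus a_{l-1})\bigcdot q/p|$ summand.

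For \eqref{btw5}, given a maximal commensurating torus $N\in C_{i,i',j,j'}$ of dividing slope $a_m$ sitting inside both $N_i^j$ and $N_{i'}^{j'}$, I place a common ruling curve of slope $q/p$ on $\partial N$ and factor each of $N_i^j\setminus N$ and $N_{i'}^{j'}\setminus N$ into basic slices. Each basic slice increases the Thurston-Bennequin invariant of a ruling of slope $q/p$ by $|(a_u\ominus a_{u-1})\bigcdot q/p|$, and records a positive or negative stabilization according to its sign; this yields both the identification and the stabilization counts. The main obstacle is \eqref{btw6}: showing that these are the \emph{only} Legendrian isotopies between stabilizations of the standard cables. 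The plan is to assume a Legendrian isotopy $S_+^{k}S_-^{l}(L_i^j)\simeq S_+^{k'}S_-^{l'}(L_{i'}^{j'})$, and then, using Honda's uniqueness of tight contact structures on thickened tori and the state-transition technology for basic-slice factorizations, construct a contact isotopy carrying a convex subtorus $N'\subset N_i^j$ onto a convex subtorus of $N_{i'}^{j'}$; replacing $N'$ by the maximal commensurating torus containing it, one recognizes the original isotopy as one arising from \eqref{btw5} together with further stabilizations. Ruling out extraneous identifications hinges on the absence of balanced continued fraction blocks of center slope $q/p$ in this regime, so that no Legendrian large cable identifications (cf.\ Theorem~\ref{llc-isotopy}) can secretly merge stabilizations. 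Once \eqref{btw6} is established, \eqref{btw7} follows at once by taking the quotient $\bigcup_{i\leq m} C(L_i^j)/\sim$.
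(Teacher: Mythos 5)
Your overall route is the paper's own: the paper deduces this theorem by adapting the proof of Theorem~\ref{classification-negative}, and your key observation is the right one --- for $q/p\geq\omega(K)$ no convex torus bounding a solid torus in the knot type $K$ has dividing slope past $q/p$, so there are no balanced continued fraction blocks with center slope $q/p$, hence no Legendrian large cables, and (when $q/p>\omega(K)$) every representative has $\tb<pq$, which is exactly why no $q/p$-minimal-thickenability hypothesis is needed here. The genuine gaps are in how you dispose of the hard items. For Item~\eqref{btw1} you assert that non-destabilizability of $L_i^j$ ``follows because $N_i^j$ by definition does not thicken,'' and for Item~\eqref{btw2} that non-isotopy is ``immediate from the distinctness of the $N_i^j$.'' Neither implication is valid as stated: a destabilization of the ruling curve $L_i^j$ would a priori sit on some other convex cabling torus with no evident containment relation to $N_i^j$, and a Legendrian isotopy between ruling curves on $N_i^j$ and $N_{i}^{j'}$ does not obviously carry one torus to the other. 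These are precisely the points where the paper invests real work: Lemma~\ref{smooth-isotopy} (any two cabling tori containing the cable are smoothly isotopic rel the cable), Colin's isotopy discretization, and the slope-control inductions of Lemmas~\ref{negative-nondestab} and~\ref{negative-nonisotopic}; likewise your plan for Item~\eqref{btw6} is only a gesture (``state-transition technology'') at what is really the argument of Lemma~\ref{relation2}. Since in this regime the standard cables all occur with $i<n$ when $q/p>\omega(K)$, those lemmas apply verbatim and could simply be cited, but as written your justifications replace them with assertions that do not stand on their own.

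A secondary issue: the hypothesis allows $q/p=\omega(K)$, and if the width is realized by a convex torus of slope $q/p$ then your blanket claims that ``no solid torus admits a convex boundary with dividing slope $q/p$'' and ``no Legendrian divides of slope $q/p$ appear'' fail. In that boundary case $m=n$, the cables $L_n^j$ are Legendrian divides with $\tb=pq$ (consistent with Item~\eqref{btw3}), and one must handle tori with more than two dividing curves of slope $q/p$, which is exactly where Lemma~\ref{negative-onboundary} invoked thickenability; so this case needs to be addressed explicitly rather than excluded by the dividing-slope bound. With Lemmas~\ref{negative-nondestab}, \ref{negative-nonisotopic}, and \ref{relation2} cited (or reproved) and the $q/p=\omega(K)$ case treated separately, your outline does become the paper's intended proof.
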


There are more results which were inaccessible with previous work as the knot type contains Legendrian large cables or was not known to be uniformly thick. In work to appear, \cite{BakerEtnyreMinOnaran}, the second and third author together with Baker and Onaran will present a classification of Legendrian and transverse torus knots in tight Lens spaces. Also in \cite{Min20}, the third author will give a classification of negative cables of some twist knots.

\bigskip
\noindent
{\bf Organization:} In Section~\ref{background} we recall some background results and prove a few preliminary lemmas. In Section~\ref{pcc} we give the proofs of the theorems for positive cables, that is Theorems~\ref{classification-positive} and~\ref{postransverse}. All the results about Legendrian large cables in Section~\ref{introllc} are established in Section~\ref{llcsec}. In Section~\ref{negsec} we prove all the results about negative cables, more specifically we prove Theorems~\ref{classification-negative} and~\ref{classification-in-between}. Finally, in Section~\ref{grc} we verify the results in Section~\ref{introgrc} concerning the general behavior of Legendrian knots in the knot type of positive and negative cables. 

\bigskip
\noindent
{\bf Acknowledgements:} We thank the referees for many valuable comments on the first version of the paper. The second and third authors were partially supported by the NSF grants DMS-1608684 and DMS-1906414.

\section{Background}\label{background}

We assume that the reader has a basic understanding of $3$--dimensional contact geometry, including convex surface theory, Legendrian knots, and their invariants. We recall several definitions and theorems about contact structures on $3$--manifolds and Legendrian knots that we will use frequently. For more details, see \cite{Etnyre:convex, Etnyre:LT, Geiges:book, Honda00}. Also, for more details and figures on the Farey graph and continued fractions, see \cite[Section~2]{EtnyreRoy20}.

In Subsection~\ref{farey} we describe the Farey graph and discuss curves on tori. In the following subsection we recall the notation of a bypass and its effects on the dividing curves of tori. 
Then in Subsection~\ref{cfb} we review the classification of contact structures on solid tori and thickened tori. Subsection~\ref{compclass} discusses the computations of classical invariants of cabled knots as well as some results about intersections between curves on tori. Finally, in Subsection~\ref{cabtor} we will show that any two essential annuli in the complement of a cabled knot are smoothly isotopic. 

\subsection{Curves on tori and the Farey graph} \label{farey}
We will keep track of curves on a torus using the Farey graph. First recall that embedded curves on $T^2$ are in one to one correspondence with the rational numbers union $\infty$. We described our convention for this correspondence in the beginning of the introduction.  

Consider the unit disk in $\R^2$ with the hyperbolic metric on its interior. Label the point $(0,1)$ by $0/1$ and $(0,-1)$ by $\infty=1/0$ and connect them with a hyperbolic geodesic. Now consider points on the boundary of the unit disk with positive $x$-coordinates. Given two points that have been labeled already, say by $a/b$ and $c/d$, label the midpoint between them on the boundary of the unit disk by $(a+c)/(b+d)$ and then connect this new point to the two other points by hyperbolic geodesics. We will denote $(a+c)/(b+d)$ by $a/b \oplus c/d$. Iterate this until all of the positive rational numbers appear as a label. Now label the points on the boundary of the unit disk with $x$-coordinate negative in the same manner except now think of the point $(0,-1)$ labeled $\infty$ as $-1/0$. 

Given two vertices $r$ and $s$ in the Farey graph we will denote by $[r,s]$ all the vertices that are clockwise of $r$ and counterclockwise of $s$ (and similarly for $(r,s]$, $(r,s)$, and $[r,s)$).

It is useful to know that two curves on $T^2$ form a basis for the homology of $T^2$ if and only if they are connected by an edge in the Farey graph and this is true if and only if they have representatives that intersect (transversely) exactly once. 

We also note for future use that the minimal number of times curves of slope $a/b$ and $c/d$ will intersect is $|ad-bc|$. We denote $ad-bc$ by $a/b \bigcdot c/d$. 

\subsection{Bypasses} \label{bypasses}
A key tool we will use in our work is bypasses. Everything discussed here comes from \cite{Honda00}. Suppose $\Sigma$ is a convex surface in a contact manifold $(Y,\xi)$. A \dfn{bypass} for $\Sigma$ is a disk $D$ in $Y$ whose boundary consists of two arcs $\alpha_1$ and $\alpha_2$ such that 
\begin{enumerate}
\item $D\cap \Sigma=\alpha_1$ and the intersection of $D$ and $\Sigma$ is transverse.
\item $\alpha_1$ is a Legendrian arc in $\Sigma$ that intersects the dividing curves of $\Sigma$ in its two end points and in one interior point.
\item The characteristic foliation of $D$ has elliptic singularities along $\alpha_1$ where $\alpha_1$ intersects the dividing curves $\Gamma_\Sigma$ and the signs of the singular points alternate. These are the only singularities along $\alpha_1$. 
\item $\alpha_2$ is Legendrian and the singularities of $D$ along $\alpha_2$ all have the same sign. 
\end{enumerate}
If $\Sigma$ is oriented and $D$ is on the positive side of $\Sigma$ then if one pushes $\Sigma$ past $D$ to get a surface $\Sigma'$, we say $\Sigma'$ is the result of \dfn{attaching the bypass} to $\Sigma$. One may also attach a bypass to $\Sigma$ from the negative side. We can think of $\Sigma$ and $\Sigma'$ agreeing except along a small disk neighborhood of $\alpha_1$ or as being disjoint and cobounding a $\Sigma\times [-0,1]$. 

The dividing curves of $\Sigma'$ are related to those on $\Sigma$ in a prescribed way, see \cite[Section~3.4]{Honda00}, but we will only discuss the case when $\Sigma$ is a torus. In this case $\Sigma$ will have an even number of parallel dividing curves of some slope $s$ (we are assuming the contact structure is tight). Now when attaching a bypass to $\Sigma$ one of three things will happen:
\begin{enumerate}
\item the number of dividing curves on $\Sigma$ will increase (and the slope will stay the same),
\item the number of dividing curves on $\Sigma$ will decrease (and the slope will stay the same), or
\item the slope of the dividing curves will change and this can happen only if there are two dividing curves. 
\end{enumerate}

In certain situations we can say more. Given $\Sigma$ with $2n$ dividing curves $\Gamma_\Sigma$ of slope $s$. One can use Giroux flexibility \cite{Giroux91} to arrange that the characteristic foliation has $2n$ lines of singular points called \dfn{Legendrian divides} parallel to $\Gamma_\Sigma$ and there is one such line in each component of $\Sigma\setminus \Gamma_\Sigma$. The rest of the foliation is linear of slope $r$. We can choose the slope $r$ to be any slope except $s$. These curves in this linear foliation are called \dfn{ruling curves}. We say the foliation of $\Sigma$ is in \dfn{standard form} if it is as described above. 

\begin{lemma}\label{bypassattach}
Suppose that $\Sigma$ is a convex torus in standard form with ruling curves of slope $r$ and dividing curves of slope $s$. If a bypass is attached to the front of $\Sigma$ along a ruling curve, then the number of dividing curves will decrease if there are more than two dividing curves and if there are two dividing curves then the slope of the dividing curves will change to $s'$ where $s'$ is the point in $[s,r]$ that is closest to $r$ with an edge in the Farey graph to $s$. If the bypass is attached from the back side of $\Sigma$, then the same thing happens to the dividing curves, except in the case of two dividing curves one must consider the interval $[r,s]$ instead of $[s,r]$. 
\end{lemma}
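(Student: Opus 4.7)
The plan is to apply the local bypass attachment rule for convex surfaces from \cite{Honda00} and specialize it to the convex torus in standard form. First I would observe that, because the foliation is in standard form, the bypass arc $\alpha_1$ is a subarc of a ruling curve of slope $r$, meeting the dividing set $\Gamma_\Sigma$ transversely in exactly three points (two endpoints and one interior intersection). The bypass attachment then modifies $\Gamma_\Sigma$ only inside a disk neighborhood of $\alpha_1$, via the standard reconnection rule: the three arcs of $\Gamma_\Sigma$ that cross $\alpha_1$ get re-glued across $\alpha_1$ in a prescribed way.

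If $\Sigma$ has more than two dividing curves, the three crossings of $\alpha_1$ with $\Gamma_\Sigma$ lie on three distinct parallel dividing curves of slope $s$, since $r\neq s$ forces the ruling arc to be linear between consecutive dividing curves. Applying the reconnection rule locally, the middle dividing curve is absorbed into the two outer ones, so the new dividing set has $2(n-1)$ parallel curves of slope $s$. If $\Sigma$ has exactly two dividing curves, then one of them must be crossed twice by $\alpha_1$, and the reconnection changes the global slope of the dividing curves to some new $s'$.

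To pin down $s'$, I would pass to the universal cover $\R^2$ of $T^2$. The two dividing curves lift to a family of parallel lines in some direction $v_s$, the ruling curve lifts to parallel lines of slope $r$, and the new dividing curves after reconnection lift to a family of parallel lines of some direction $v_{s'}$. Two constraints then determine $s'$. First, tracking the local reconnection shows that each new dividing curve must meet each old one algebraically once, which forces $|s\bigcdot s'|=1$; equivalently, $s$ and $s'$ share an edge of the Farey graph. Second, the side of attachment (front or back) together with the orientation of $\alpha_1$ relative to $r$ determines which side of $s$ the new slope $s'$ lies on, giving $s'\in[s,r]$ for a front bypass and $s'\in[r,s]$ for a back bypass.

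The hard part will be to show the specific $s'$ so obtained is the Farey-neighbor of $s$ closest to $r$ in the allowed interval. For this I would study the universal cover picture more carefully: the three intersections of $\alpha_1$ with the lifts of $\Gamma_\Sigma$ are three consecutive crossings of a slope-$r$ line with the lattice-lines of direction $v_s$, and a direct computation in this local model shows that $v_{s'}$ is forced to point as ``close to $r$'' as the constraint $|s\bigcdot s'|=1$ allows. This singles out exactly the vertex described in the statement. The back case follows by a symmetric argument with the roles of $[s,r]$ and $[r,s]$ reversed.
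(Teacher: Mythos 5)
The paper itself offers no proof of this lemma: it is quoted as background, being Honda's Bypass Attachment Lemma from \cite{Honda00} (together with the standard statement for more than two dividing curves), so the comparison is with the standard proof there. Your outline follows that standard route: reduce to the local reconnection rule for the dividing set in a disk around the attaching arc, and treat the cases $\#\Gamma_\Sigma>2$ and $\#\Gamma_\Sigma=2$ separately. The first case is fine as you describe it: three consecutive intersections along a ruling curve lie on three distinct parallel dividing curves when there are at least four of them, and the reconnection merges those three curves into one, dropping the count by two while keeping the slope $s$.

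In the decisive $\#\Gamma_\Sigma=2$ case, however, there is a genuine gap. The step that actually identifies $s'$ --- that it is the Farey neighbor of $s$ in $[s,r]$ (respectively $[r,s]$) closest to $r$, with the front/back attachment deciding which interval --- is precisely the content of the lemma, and you defer it to an unspecified ``direct computation in the local model.'' The intermediate claim you do offer, that ``each new dividing curve must meet each old one algebraically once,'' forcing $|s\bigcdot s'|=1$, is not a valid argument as stated: the old dividing curves are destroyed by the modification, and knowing only that the two dividing sets agree outside a disk yields no such intersection statement (a mod $2$ homology comparison gives nothing, and with orientations the classes genuinely change). You also need to rule out that the reconnection, when the attaching arc meets one of the two curves twice, produces a contractible component or fails to yield exactly two parallel essential curves. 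The standard way to close all of these gaps at once is the computation you postponed: normalize by an element of $SL(2,\Z)$ so that $s=\infty$, draw the two vertical dividing curves and the attaching arc along a slope-$r$ ruling curve crossing three consecutive lifts in the universal cover, and trace the reconnected dividing set explicitly; one finds exactly two curves, of slope $\lfloor r\rfloor$ for a front attachment and $\lceil r\rceil$ for a back attachment, which translates into the Farey-graph statement by equivariance. Without carrying out that model computation (or simply citing \cite{Honda00}), the proposal asserts rather than proves the lemma.
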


We also recall a standard way to find bypasses. If $\Sigma$ and $\Sigma'$ are disjoint convex surfaces and $A$ is a convex annulus with one boundary component a Legendrian curve on $\Sigma$ and the other boundary component a Legendrian curve $L$ on $\Sigma'$ but otherwise disjoint from $\Sigma\cup \Sigma'$, then if the boundary of $A$ intersects the dividing curves of $\Sigma$ more than those of $\Sigma'$, then there will be a dividing curve on $A$ that cobounds a disk with a portion of $L$. One may use Giroux flexibility to use this disk to build a bypass for $\Sigma$. 

\subsection{Contact structures on \texorpdfstring{$S^1\times D^2$}{S1xD2} and \texorpdfstring{$T^2\times [0,1]$}{T2x[0,1]}} \label{cfb}

Here we briefly recall the classification of contact structures on solid tori and thickened tori due to Giroux \cite{Giroux00} and Honda \cite{Honda00}. 

Consider a contact structure $\xi$ on $T^2\times[0,1]$ that has convex boundary with dividing curves of slope $s_0$ on $T^2\times \{0\}$ and slope $s_1$ on $T^2\times \{1\}$. We also assume that each boundary component has two dividing curves. We say $\xi$ is \dfn{minimally twisting} if any convex torus in $T^2\times [0,1]$ that is parallel to $T^2\times \{0\}$ has dividing slope in $[s_0,s_1]$. Now consider a minimal path in the Farey graph from $s_0$ clockwise to $s_1$. We call this a decorated path if each edge has been assigned a $+$ sign or a $-$ sign. We call a path in the Farey graph a \dfn{continued fraction block} if there is a change of basis such that the path goes from $0$ clockwise to $n$ for some positive integer $n$. We say two choices of signs on a continued fraction block are related by \dfn{shuffling} if they have the same number of $+$ signs (and hence the same number of $-$ signs too). 

\begin{theorem}[Giroux and Honda, 2000 \cite{Giroux00, Honda00}] \label{basic-slice}
Each decorated minimal path in the Farey graph from $s_0$ clockwise to $s_1$ describes a minimally twisting contact structures on $T^2 \times [0,1]$ with two dividing curves on each boundary component of slopes $s_0$ and $s_1$. Two such decorated paths will describe the same contact structure if and only if the decorations differ only by shuffling in continued fraction blocks. 
\end{theorem}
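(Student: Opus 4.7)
The plan is to reduce the classification to the case of a \emph{basic slice} --- a minimally twisting $T^2\times[0,1]$ whose boundary slopes $s_0,s_1$ span a single edge of the Farey graph and each carry two dividing curves --- and then assemble basic slices along a minimal Farey path. After an $SL(2,\Z)$ change of basis one may normalize a basic slice so that $s_0=0$ and $s_1=-1$. Collapsing the dividing curves on $T^2\times\{1\}$ to a point converts the thickened torus into a tight contact structure on a solid torus whose meridional convex disk meets the dividing set in two arcs, so Eliashberg's classification of tight contact structures on $B^3$ forces it to admit exactly two such contact structures up to isotopy rel boundary, distinguished by the sign of the relative half Euler class. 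We call this the \emph{sign} of the edge, and it assigns a $\pm$ to every edge in any decorated path.

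For existence, given a decorated minimal path $s_0=a_0,a_1,\ldots,a_n=s_1$, glue the corresponding signed basic slices along convex tori of slopes $a_1,\ldots,a_{n-1}$; since a convex torus with prescribed dividing data has a well-defined tight collar, the gluing produces a tight minimally twisting contact structure with the prescribed boundary. Conversely, given a minimally twisting $\xi$ with boundary slopes $s_0,s_1$, repeated use of the Imbalance Principle together with Legendrian realization produces a convex torus parallel to the boundary realizing any prescribed Farey vertex in $[s_0,s_1]$. Iterating at each $a_i$ factors $\xi$ into signed basic slices and so recovers a decorated minimal path.

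For uniqueness, one must show that two decorated paths describe isotopic contact structures exactly when they differ by shuffling within continued fraction blocks. The ``if'' direction is the \emph{shuffling lemma}: inside a continued fraction block any adjacent pair of basic slices of opposite signs can be reorganized with their signs swapped. After change of basis this reduces to a $T^2\times[0,1]$ with boundary slopes $0$ and $2$ factored by the middle slope-$1$ torus; one constructs a second convex slope-$1$ torus by attaching a bypass along a ruling curve whose slope lies in the appropriate Farey interval (as allowed by Lemma~\ref{bypassattach}), and then verifies that the two new basic slices cobounded by this second torus carry the opposite sign pattern while the ambient contact structure on the whole block is unchanged. Iterating this two-slice swap generates the full shuffling equivalence. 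The ``only if'' direction follows from a relative Euler class computation on a natural section over the boundary: within a continued fraction block the relative Euler class depends only on the total number of positive basic slices, while between blocks it records the sign of each individual edge separately. Consequently, two decorations that are not shuffling-equivalent produce distinct relative Euler classes and hence non-isotopic contact structures.

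The main obstacle is the shuffling lemma. The technical core is the explicit construction of the alternate slope-$1$ torus and the verification, via the standard computation of the relative Euler class on a convex disk, that the swap really preserves the tight contact structure. The rigidity outside continued fraction blocks is ultimately an obstruction theorem in disguise: if $a_i$ and $a_{i+1}$ do not lie in a common continued fraction block then no bypass inside the thickened torus between them can change the dividing slope past $a_i$, which is precisely the constraint encoded by Lemma~\ref{bypassattach}. Once both halves are in place, the theorem follows.
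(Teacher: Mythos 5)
You should first note that the paper does not prove this statement at all: Theorem~\ref{basic-slice} is quoted as background and attributed to Giroux and Honda \cite{Giroux00, Honda00}, so your sketch has to be measured against the original proofs. Your overall architecture does follow theirs (classify basic slices, factor a minimally twisting structure along convex tori realizing the vertices of the minimal Farey path, then handle the shuffling ambiguity), but at each of the three places where the real content lies there is a genuine gap.

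First, the basic-slice step: ``collapsing the dividing curves on $T^2\times\{1\}$ to a point'' is not a contact operation on a convex boundary (that collapsing trick requires a pre-Lagrangian boundary torus; the standard move is instead to glue a standard neighborhood of a Legendrian curve along an integer boundary slope after a change of basis), and even granting it, Eliashberg's uniqueness on $B^3$ yields at most one tight structure for each configuration of dividing arcs on the cutting disk --- it cannot by itself produce ``exactly two.'' The upper bound of two comes from analyzing the possible dividing sets on a vertical convex annulus after cutting and edge-rounding, and the lower bound requires showing that both candidates are actually tight (in the original proofs this is done by embedding them into explicit tight models) and then distinguishing them by the relative Euler class. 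Second, existence: saying the gluing of signed basic slices along convex tori ``produces a tight minimally twisting contact structure'' because a convex torus has a well-defined collar is not an argument --- tightness is not preserved by gluing along convex surfaces in general, and in \cite{Giroux00, Honda00} the model structures are shown tight by realizing them inside known tight manifolds, not by abstract gluing. Third, the ``only if'' direction: you assert that decorations which are not shuffle-equivalent have distinct relative Euler classes. The relative Euler class is the single class Poincar\'e dual to $\sum_i \epsilon_i\,(a_i\ominus a_{i-1})$, and whether this one sum separates all non-shuffle-equivalent decorations is a nontrivial arithmetic claim about geodesic Farey paths that you neither formulate precisely nor prove; the original proofs do not deduce distinctness from this total class alone, but pin down the basic-slice factorization up to shuffling by a more involved inductive argument interwoven with the solid-torus and lens-space classifications. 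Finally, the shuffling lemma itself --- which you rightly call the technical core --- is only named: the verification that the alternative middle torus produces the swapped signs while leaving the ambient structure unchanged is exactly the content that must be proved, and it does not follow from Lemma~\ref{bypassattach} alone.
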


A particular case of Theorem~\ref{basic-slice} is when $s_0$ and $s_1$ share an edge in the Farey graph. In this case the theorem says that there are exactly two minimally twisting contact structures. These are called \dfn{basic slices} and the above theorem says that all minimally twisting contact structures can be thought of as stacking several basic slices together. The two different contact structures on a basic slice can be distinguished by their relative Euler class, and after picking an orientation, we call them \dfn{positive} and \dfn{negative basic slices}. We also define $\epsilon(B)$ to be $1$ if $B=(T^2\times [0,1], \xi)$ is a positive basic slice, $-1$ if $B$ is a negative basic slice. We note that the result of attaching a bypass to a torus $T$ in standard form with two dividing curves is another torus $T'$ that cobounds with $T$ a basic slice. 

One can compute the relative Euler class of a minimally twisting contact structure on $T^2\times [0,1]$ with dividing slopes $s_0$ and $s_1$ as follows. Let $s_0=a_1,\ldots, a_n=s_1$ be the vertices in a minimal path from $s_0$ to $s_1$ and let $\epsilon_i$ be the sign on the basic slice corresponding to $s_{i-1}$ and $s_i$. Then the relative Euler class of the contact structure corresponding to this path is Poincar\'e dual to the curve
\[
\sum_{i=1}^n \epsilon_i(a_i\ominus a_{i-1}) 
\]
where $a/b\ominus c/d= (a-c)/(b-d)$. 

The following two lemmas are direct consequences of the classification of tight contact structures on $T^2\times I$ in \cite{Giroux00, Honda00}.

\begin{lemma}\label{realizeslopes}
If $\xi$ is a minimally twisting contact structure on $T^2\times I$ with boundary slopes $s_0$ and $s_1$, then any slope $s\in[s_0,s_1]$ can be realized by a convex torus parallel to the boundary with two dividing curves.
\end{lemma}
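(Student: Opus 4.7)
The plan is to use the decomposition into basic slices provided by Theorem~\ref{basic-slice} and then perform a local construction inside a single basic slice when $s$ does not happen to be a vertex of the chosen minimal path.

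First, let $s_0=a_0, a_1, \ldots, a_n=s_1$ be a minimal clockwise path in the Farey graph from $s_0$ to $s_1$ and apply Theorem~\ref{basic-slice} to write $(T^2 \times I, \xi)$ as a contact-stacked union of basic slices $B_1,\ldots, B_n$, where $B_i$ is bounded by convex tori with dividing slopes $a_{i-1}$ and $a_i$ and with exactly two dividing curves on each boundary component. If $s=a_i$ for some $i$, then the convex torus separating $B_i$ and $B_{i+1}$ (or one of the two boundary tori when $i\in\{0,n\}$) is a convex torus parallel to the boundary with two dividing curves of slope $s$, and the lemma is established in this case.

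If $s$ is not a vertex on this minimal path, then there is a unique index $i$ such that $s$ lies strictly inside the Farey interval $(a_{i-1},a_i)$. I would like to realize $s$ by a convex torus contained in the basic slice $B_i$. After an orientation-preserving change of basis on $T^2$ carrying the edge $(a_{i-1},a_i)$ to the edge $(0,\infty)$, I would identify $B_i$ with the standard $T^2$-invariant model $(T^2\times[0,1], \ker(\cos g(t)\, dx + \sin g(t)\, dy))$ for an appropriate strictly monotonic smooth function $g$. In this model each slice $T^2\times\{t\}$ is pre-Lagrangian, and as $t$ ranges over $[0,1]$ the slope of its linear characteristic foliation sweeps continuously through $[0,\infty]$, hence through the full Farey interval $[a_{i-1},a_i]$ after undoing the change of basis. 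Picking $t_0$ so that the pre-Lagrangian slope at $T^2\times\{t_0\}$ equals $s$, a $C^0$-small $T^2$-invariant perturbation of the contact form localized near $T^2\times\{t_0\}$ produces a convex torus parallel to the boundary with exactly two dividing curves of slope $s$.

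The main obstacle is the local step inside the basic slice: one must verify both that the standard $T^2$-invariant model genuinely realizes every rational slope in $[a_{i-1},a_i]$ as a pre-Lagrangian leaf, and that the convex perturbation can be arranged so that the resulting convex torus has exactly two dividing curves rather than a larger even number. Both facts are standard features of the $T^2$-invariant theory developed in \cite{Honda00} (and are closely tied to the uniqueness portion of Theorem~\ref{basic-slice} for a single basic slice). Once they are in hand, Case~1 and Case~2 together cover every $s\in[s_0,s_1]$, completing the proof.
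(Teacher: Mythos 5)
Your proposal is correct and is essentially the argument the paper intends: the paper gives no independent proof of Lemma~\ref{realizeslopes}, stating only that it is a direct consequence of the classification in \cite{Giroux00, Honda00}, and your two steps---factoring $\xi$ into basic slices along the minimal Farey path via Theorem~\ref{basic-slice}, then realizing an intermediate slope inside a single basic slice through the rotative $T^2$-invariant model (equivalently, the standard fact that a basic slice factors along a convex torus of any intermediate slope)---are exactly how that consequence is extracted there. The two facts you flag at the end (every intermediate rational slope appears as a pre-Lagrangian leaf of the model, and a pre-Lagrangian torus of rational slope can be perturbed to a convex torus with exactly two dividing curves of that slope) are indeed standard parts of that theory, so no gap remains.
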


\begin{lemma}\label{anyslope}
If $\xi$ is not a minimally twisting contact structure on $T^2\times I$ then any slope may be realized as dividing curves on a convex torus in $T^2\times I$ parallel to the boundary.
\end{lemma}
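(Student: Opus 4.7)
The plan is to use the failure of minimal twisting to split $T^2\times[0,1]$ along an ``exotic'' convex torus into two minimally twisting pieces and then apply Lemma~\ref{realizeslopes} to each piece.

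First, by the non-minimally twisting hypothesis, I would produce a convex torus $T'$ parallel to the boundary with dividing slope $s\notin[s_0,s_1]$; equivalently, $s$ lies strictly on the counterclockwise arc from $s_0$ to $s_1$ in the Farey graph. Cutting along $T'$ splits $T^2\times[0,1]$ into two thickened tori $R_1$ and $R_2$ with boundary slope pairs $(s_0,s)$ and $(s,s_1)$ respectively, each still equipped with a tight contact structure and a convex boundary with two dividing curves on each component.

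Next I would argue that $T'$ can be chosen so that $R_1$ is minimally twisting with respect to the clockwise arc $[s_0,s]$ and $R_2$ is minimally twisting with respect to $[s,s_1]$. The idea is to take $T'$ ``maximally far'' from $[s_0,s_1]$: if $R_1$ contains a convex parallel torus with slope in the open clockwise arc $(s,s_0)$, then its slope lies even deeper in the counterclockwise arc from $s_0$ to $s_1$, and one replaces $T'$ by this new torus, and likewise on the $R_2$ side. Each such replacement strictly enlarges the counterclockwise arc separating the slope of $T'$ from $[s_0,s_1]$. The main obstacle is making this termination argument precise; I would resolve it by appealing to Honda's classification underlying Theorem~\ref{basic-slice}: a tight contact structure on $T^2\times I$ with two dividing curves on each boundary is built from finitely many stacked basic slices, so the collection of dividing slopes realized on parallel convex tori is discrete and the replacement process halts after finitely many steps.

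Finally, with $R_1$ and $R_2$ both minimally twisting, Lemma~\ref{realizeslopes} realizes every slope in the clockwise arc $[s_0,s]$ via a convex torus in $R_1$ and every slope in the clockwise arc $[s,s_1]$ via a convex torus in $R_2$. Because $s\notin[s_0,s_1]$, the union of these two clockwise arcs is all of $\mathbb{Q}\cup\{\infty\}$, so every slope is realized as the dividing slope of some convex torus in $T^2\times[0,1]$ parallel to the boundary, as claimed.
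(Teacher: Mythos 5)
There is a genuine gap, and it sits exactly where you flagged the "main obstacle." Your skeleton (cut along a convex torus $T'$ whose dividing slope $s$ lies outside $[s_0,s_1]$, realize slopes on each side, and note that the two clockwise arcs $[s_0,s]$ and $[s,s_1]$ cover all of $\mathbb{Q}\cup\{\infty\}$) is the right picture, but the step where you arrange both $R_1$ and $R_2$ to be minimally twisting does not work. First, the termination argument is circular: the claim that the dividing slopes of parallel convex tori form a discrete set because the slab is "built from finitely many stacked basic slices" is exactly the content of the \emph{minimally twisting} classification (Theorem~\ref{basic-slice} is stated only for minimally twisting structures). In a non-minimally twisting slab the set of realized slopes is all of $\mathbb{Q}\cup\{\infty\}$ --- which is precisely the lemma you are trying to prove --- so it is dense, and your replacement process has no reason to halt. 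Second, the desired $T'$ need not exist at all: a tight slab with enough Giroux torsion (e.g.\ a perturbation of $\ker(\sin(2\pi t)\,dx+\cos(2\pi t)\,dy)$ on $T^2\times[0,1]$, where the slopes of parallel convex tori wrap around $\mathbb{RP}^1$ twice) is non-minimally twisting, yet no single parallel convex torus splits it into two minimally twisting pieces, since stacking two minimally twisting slabs along one torus produces a structure whose slopes wrap around strictly less than twice. So the reduction to Lemma~\ref{realizeslopes} as stated cannot be forced.

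The repair is to drop the requirement that the pieces be minimally twisting. What one actually uses is the stronger realization statement, valid for \emph{every} tight slab with convex boundary: every slope in the clockwise arc from the back boundary slope to the front boundary slope is realized by a parallel convex torus. This follows from Honda's factorization into basic slices, proved with the imbalance principle and the bypass-attachment behavior recorded in Lemma~\ref{bypassattach}, with no minimal twisting hypothesis; applying it to $R_1$ and $R_2$ then finishes your argument. Note that the paper itself gives no proof of this lemma --- it records it as a direct consequence of the Giroux--Honda classification --- so the intended justification is that citation rather than a reduction to Lemma~\ref{realizeslopes}. A smaller point to tidy up in any case: the torus witnessing non-minimal twisting, and hence the new boundary components of $R_1$ and $R_2$, may have more than two dividing curves, which you assert but do not arrange.
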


A contact structure $\xi$ on $T^2\times [0,1]$ described by a continued fraction block of edges in the Farey graph will also be called a \dfn{continued fraction block}. We say that it is \dfn{balanced} if it has the same number of positive and negative signs in the decorated path describing the contact structure. Suppose the length of the path describing the contact structure is $2n$, then after a coordinate change we can assume that the slope of the dividing curves on the back face is $-n$ and on the front face is $n$. 
Let $A$ be an annulus with slope $0$ in this balanced continued fraction block. Then the relative Euler class of the contact structure evaluated on $A$ is $0$.

Now turning to contact structures on solid tori $S^1\times D^2$ with our slope convention, recall that the slope of the meridian is $\infty$. Call a path in the Farey graph \dfn{almost decorated} if a sign has been assigned to all but the counterclockwise most edge. 

\begin{theorem}[Giroux and Honda, 2000 \cite{Giroux00, Honda00}] 
Each almost decorated minimal path in the Farey graph from $-\infty=\infty$ clockwise to $s$ describes a contact structures on $S^1\times D^2$ with two dividing curves on the boundary  of slopes $s$. Two such decorated paths will describe the same contact structure if and only if the decorations differ only by shuffling in continued fraction blocks. 
\end{theorem}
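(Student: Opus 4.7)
The plan is to bootstrap the solid-torus classification from the thickened-torus classification in Theorem~\ref{basic-slice}, combined with an analysis of how the innermost basic slice can be absorbed into a standard neighborhood of the core Legendrian knot.

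For existence, given an almost decorated minimal path $\infty = a_0, a_1, \ldots, a_n = s$ with signs assigned to all edges except the one from $a_0$ to $a_1$, I first build a minimally twisting contact structure on $T^2 \times [0,1]$ with boundary slopes $a_1$ and $s$ according to the decorated subpath $a_1, \ldots, a_n$ using Theorem~\ref{basic-slice}. Then I glue onto the $a_1$-boundary a standard neighborhood of a Legendrian knot whose meridional disk has slope $\infty$ and whose boundary has dividing slope $a_1$ (possible since $a_1$ shares an edge with $\infty$ in the Farey graph). The result is a tight contact structure on $S^1 \times D^2$ with boundary slope $s$ and two dividing curves.

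For the classification, given any such tight contact structure $\xi$ on $S^1 \times D^2$, the strategy is to Legendrian realize the core, take a standard neighborhood $N$ whose boundary has dividing slope $a_1$ adjacent to $\infty$ in the Farey graph, and apply Theorem~\ref{basic-slice} to the $T^2 \times [0,1]$ complement to obtain a minimal decorated path from $a_1$ to $s$. Prepending the (unsigned) edge from $\infty$ to $a_1$ yields an almost decorated path describing $\xi$. Two such paths describe contact isotopic structures whenever the corresponding $T^2 \times [0,1]$ pieces are contact isotopic rel boundary, which by Theorem~\ref{basic-slice} happens exactly when the signs differ by shuffling within continued fraction blocks.

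The main obstacle is showing that the sign of the innermost edge is genuinely irrelevant, equivalently, that a standard neighborhood of the core $L$ together with a positive basic slice from slope $\infty$ to slope $a_1$ is contact isotopic to a standard neighborhood of the positive stabilization of $L$, and similarly for a negative basic slice and negative stabilization. This is a local computation near the core: the two possible signs on the innermost basic slice correspond to the two possible stabilizations of $L$, both of which yield Legendrian representatives of the same smooth core that sit inside any larger tight neighborhood. Hence either sign choice gives a contact isotopic structure on $S^1 \times D^2$. Once this absorption property is established, the shuffling ambiguity for the remaining edges is inherited directly from Theorem~\ref{basic-slice}, and the boundary-sign ambiguity is absorbed into the core, giving precisely the stated classification.
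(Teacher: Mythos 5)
This statement is quoted in the paper as a background theorem of Giroux and Honda; the paper gives no proof, so your proposal has to be measured against the arguments in \cite{Giroux00, Honda00}, whose broad skeleton (peel off a standard neighborhood of a Legendrian realization of the core and reduce to the thickened-torus classification, Theorem~\ref{basic-slice}) you do follow. The first genuine gap is in your existence step: you glue the standard neighborhood of slope $a_1$ to a minimally twisting layer with slopes $a_1$ and $s$ and simply assert that ``the result is a tight contact structure.'' Tightness is not preserved by gluing along a convex torus, and here the torus is compressible in the solid torus, so no soft gluing criterion applies; moreover many of the structures being constructed are not universally tight, so universal-tightness arguments cannot certify them either. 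In \cite{Honda00} the existence half is the substantial part: the candidate structures are realized inside explicitly constructed tight (Stein fillable) manifolds via Legendrian surgery presentations, not by abstract gluing. As written, your existence argument proves nothing.

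The second gap is in the classification step. To attach a well-defined equivalence class of decorated paths to a given tight structure you need (i) that the core can be Legendrian realized with twisting exactly $a_1=\lfloor s\rfloor$, i.e.\ that the solid torus contains a convex torus of slope $a_1$ with two dividing curves, and (ii) that this standard neighborhood $N$ is unique up to contact isotopy inside the solid torus; without (ii) the ``only if'' direction is open, since two non-shuffle-equivalent layers could a priori become isotopic through an isotopy not preserving $N$. This is not a hypothetical worry: if one peels off a neighborhood of smaller integer slope, distinct layers really do glue to the same solid torus (every decoration of the path $-1,-2,-3$ glues to the unique tight solid torus of boundary slope $-3$), which is exactly the phenomenon your ``absorption'' paragraph is meant to handle. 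But the argument you give there --- that $S_+(L)$ and $S_-(L)$ are both Legendrian representatives of the core sitting in any larger tight neighborhood, ``hence either sign choice gives a contact isotopic structure'' --- is circular: it presupposes uniqueness of the tight solid torus with integer boundary slope, which is the base case of this very theorem (due to Kanda, reproved in \cite{Honda00} from Eliashberg's uniqueness on the ball) and is not a trivial ``local computation near the core.'' A smaller but symptomatic slip: there is no ``basic slice from slope $\infty$ to slope $a_1$'' inside a tight solid torus (a convex torus of meridional dividing slope would bound an overtwisted piece); the object playing that role is the standard neighborhood itself, and one must also verify that the complementary layer $S\setminus N$ is minimally twisting before invoking Theorem~\ref{basic-slice}. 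Until the tightness of the constructed structures, the realization and uniqueness of the slope-$\lfloor s\rfloor$ neighborhood, and the integer-slope base case are supplied, the proposal is an outline of the known strategy rather than a proof.
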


We will also need the following result. 

\begin{lemma}\label{anyslopeless}
If $\xi$ is a tight contact structure on the solid torus $S^1\times D^1$ so that the boundary is convex with dividing slope $s$, then any slope less than or equal to $s$ may be realized as the dividing slope on a convex torus parallel to the boundary (and we may assume the torus has two dividing curves). 
\end{lemma}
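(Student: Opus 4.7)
The plan is to realize slope $s'$ inside $N$ as the dividing slope of a convex torus parallel to $\partial N$ by shrinking $N$ along a stabilized Legendrian core and then applying Lemma~\ref{realizeslopes} (or Lemma~\ref{anyslope}) to the resulting thickened torus. As a preliminary, attaching bypasses from the inside of $\partial N$ lets me reduce to the case that $\partial N$ has exactly two dividing curves of slope $s$. The case $s' = s$ is handled by a convex pushoff of $\partial N$, so from here on I assume $s' < s$.

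Let $L$ be a Legendrian core of $N$ with maximal Thurston-Bennequin number. Because stabilization lowers $\tb$ by one and can be iterated arbitrarily many times, I can pick an integer $n_0$ with $n_0 \leq s'$ and $n_0 \leq \tb(L)$, and then stabilize $L$ enough times to produce a Legendrian core $L'$ with $\tb(L') = n_0$. A standard neighborhood $N_{L'}$ of $L'$ is then a solid torus inside $N$ whose convex boundary carries two dividing curves of slope $n_0$, and the complement $R := N \setminus \operatorname{int}(N_{L'})$ is a thickened torus with convex boundary of slopes $n_0$ and $s$, tight as a subregion of $(N,\xi)$.

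Because $n_0, s', s$ are finite real numbers with $n_0 \leq s' \leq s$, the real interval $[n_0, s]$ coincides with the clockwise Farey-graph interval from $n_0$ to $s$, and so contains $s'$. If $R$ is minimally twisting, Lemma~\ref{realizeslopes} yields a parallel convex torus inside $R$ with dividing slope $s'$ and two dividing curves. If $R$ is not minimally twisting, Lemma~\ref{anyslope} yields such a torus with slope $s'$, and if needed a further bypass attachment arranges two dividing curves. In either case the resulting torus is parallel to $\partial N$ inside $N$.

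The main subtlety worth highlighting is that I do not need to determine which of the two cases for $R$ actually occurs; both are handled uniformly by the corresponding lemma. The one genuine input is the standard fact that stabilization can drive $\tb$ arbitrarily negative, producing inner standard neighborhoods with arbitrarily small integer dividing slope.
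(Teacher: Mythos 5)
Your argument is correct, and it is worth noting that the paper itself offers no proof of this lemma: it is stated as a consequence of the Giroux--Honda classification recalled in Theorem~\ref{basic-slice}, so there is nothing to match line by line. Your route---reduce to two dividing curves on the boundary, peel off a standard neighborhood of a sufficiently stabilized Legendrian core with twisting $n_0\leq s'$, and then realize $s'$ in the resulting thickened torus via Lemma~\ref{realizeslopes}---is exactly the standard justification, and your observation that the clockwise Farey interval $[n_0,s]$ is the real interval (so it contains $s'$ but not $\infty$) is the right point to check. Two small refinements would tighten it. First, the non-minimally-twisting branch is in fact vacuous: by Lemma~\ref{anyslope} it would produce a parallel convex torus of meridional slope $\infty$ inside the solid torus, whose Legendrian divide bounds a meridian disk with twisting zero, violating tightness (this is the same argument the paper uses in the proof of Lemma~\ref{positive-onboundary}); noting this also removes the need for your unjustified ``further bypass attachment'' to get two dividing curves in that case. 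Second, the opening reduction to two dividing curves on $\partial N$ should be justified by the usual imbalance argument---take a convex annulus from a ruling curve on the boundary of the inner standard neighborhood to a ruling curve on $\partial N$ and attach the resulting bypasses until only two dividing curves remain, as is done in the proof of Lemma~\ref{mainlem}. Also, taking the core with \emph{maximal} twisting is unnecessary (any Legendrian core suffices after stabilization), and since the solid torus here is abstract, ``$\tb$'' should be read as twisting relative to the product framing; neither point affects the argument.
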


\subsection{Knots in contact manifolds}\label{kicm}
All facts in this section can be found in \cite{EtnyreHonda01b}.

A null-homologous Legendrian knot $L$ in a contact manifold $(Y,\xi)$ has a neighborhood $N$ with convex boundary having two dividing curves of slope $\tb(L)$. If $\partial N$ is in standard form, then we say that $N$ is a \dfn{standard neighborhood} of $L$. 

Using a model for the standard neighborhood of $L$ we can take a vector field $v$ along $L$ that is tangent to $\xi$ and transverse to $L$. Pushing $L$ along $\pm v$ will result in a transverse knot. If these knots are oriented by $\xi$ and $L$ has an orientation, then one of the transverse knots will have an orientation that agrees with $L$. We call this the \dfn{positive transverse push-off} and denote it $L_+$. The other transverse knot is the \dfn{negative transverse push-off} and is denoted $L_-$. One may easily compute that $sl(L_+)= \tb(L)-r(L)$. 

We also have the following useful fact, which is an immediate consequence of Theorem~2.4.2 in \cite{Eliashberg92}. 
\begin{theorem} \label{Legendrian-isotopy}
Legendrian knots in $(S^3,\xi_{std})$ are Legendrian isotopic if and only if there is a contactomorphism of $S^3$ taking one of the knots to the other. 
\end{theorem}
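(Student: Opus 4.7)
The plan is to deduce both implications from two standard inputs: the Legendrian isotopy extension theorem on the one hand, and the cited Eliashberg connectedness result for the contactomorphism group of $(S^3,\xi_{std})$ on the other.

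For the forward implication, I would suppose $L_0$ and $L_1$ are Legendrian isotopic via a smooth family $\{L_t\}_{t\in[0,1]}$ of Legendrian knots. By the Legendrian isotopy extension theorem, which follows from the existence of standard contact neighborhoods of Legendrian knots together with a cutoff of the naturally arising contact vector field along $L_t$, the family $\{L_t\}$ extends to an ambient contact isotopy $\{\phi_t\}$ of $(S^3,\xi_{std})$ with $\phi_0=\mathrm{id}$ and $\phi_t(L_0)=L_t$. The time-one map $\phi_1$ is then a contactomorphism of $(S^3,\xi_{std})$ taking $L_0$ to $L_1$.

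For the reverse implication, I would suppose $\phi\colon(S^3,\xi_{std})\to(S^3,\xi_{std})$ is a contactomorphism with $\phi(L_0)=L_1$. Theorem~2.4.2 of \cite{Eliashberg92} implies that the contactomorphism group of $(S^3,\xi_{std})$ is path-connected, so one can choose a smooth path $\{\phi_t\}_{t\in[0,1]}$ of contactomorphisms with $\phi_0=\phi$ and $\phi_1=\mathrm{id}$. The map $t\mapsto\phi_t(L_0)$ is then a smooth family of Legendrian embeddings of $S^1$ in $(S^3,\xi_{std})$ realizing a Legendrian isotopy from $\phi(L_0)=L_1$ to $L_0$. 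Note that since every contactomorphism of $(S^3,\xi_{std})$ preserves the ambient orientation (one computes $\phi^*(\alpha_{std}\wedge d\alpha_{std})=f^2(\alpha_{std}\wedge d\alpha_{std})$ whenever $\phi^*\alpha_{std}=f\,\alpha_{std}$), there is no orientation subtlety to address.

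The only nontrivial ingredient is Eliashberg's connectedness result; it is genuinely a deep statement, equivalent to the uniqueness of the tight contact structure on $S^3$ up to isotopy. Granted this input the argument is essentially formal, which is why the paper presents the theorem as an immediate consequence of \cite{Eliashberg92}. The forward direction uses nothing special about $S^3$, but the reverse direction crucially depends on the ambient manifold: the analogue fails on a general tight contact $3$--manifold whose contactomorphism group has more than one path component.
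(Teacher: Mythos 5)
Your argument is exactly the intended one: the paper offers no proof beyond the citation of Eliashberg, and your two steps (Legendrian isotopy extension for the forward implication, connectedness of the contactomorphism group via Theorem~2.4.2 of \cite{Eliashberg92} for the reverse) are the standard way to make that citation precise. One caveat worth noting: Eliashberg's theorem yields connectedness of the group of \emph{co-orientation preserving} contactomorphisms of $(S^3,\xi_{std})$ -- maps such as complex conjugation preserve the plane field $\xi_{std}$ but reverse its co-orientation and cannot be joined to the identity by contactomorphisms (the sign of $f$ in $\phi^*\alpha_{std}=f\,\alpha_{std}$ is constant along a path), so your "path-connected" claim, and hence the statement itself, should be read with that convention, which is also how the theorem is applied later in the paper.
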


If $T$ is a transverse knot in $(Y,\xi)$ then it has a neighborhood $N$ that is contactomorphic to $(S^1\times D_a^2, \ker(d\phi + r^2\, d\theta))$ for some small $a$ where $D^2_a$ is the disk of radius $a$ in $\R^2$. One may easily check that for any integer $n$ large enough there will be a torus parallel to $\partial N$ inside of $N$ with linear characteristic foliation of slope $-n$. Let $T_n$ be one of the leaves in the characteristic foliation. Clearly $T_n$ is a Legendrian knot that is smoothly isotopic to $T$. We call $T_n$ a \dfn{Legendrian approximation} to $T$. Notice that there are infinitely many Legendrian approximations of $T$. One may show that $(T_n)_+$ is transversely isotopic to $T$. It is also easy to check that $\tb(T_n)=-n$ and $r(T_n)=-n-sl(T)$. We also know that $T_{n+1}=S_-(T_n)$ where $S_-(L)$ is the negative stabilization of $L$. 

\begin{theorem}[Etnyre and Honda 2001, \cite{EtnyreHonda01b}]\label{traniso}
Two transverse knots $T$ and $T'$ are transversely isotopic if and only if they have Legendrian approximations that become Legendrian isotopic after a suitable number of negative stabilizations. 
\end{theorem}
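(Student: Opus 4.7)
The plan is to prove both directions by relating the two constructions (Legendrian approximation and transverse push-off) and checking that negative stabilization is invisible on the transverse side.

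For the ($\Leftarrow$) direction, I would use two facts already in hand: first, the positive transverse push-off $L_+$ of a Legendrian knot is well-defined up to transverse isotopy, depending only on the Legendrian isotopy class of $L$; second, $S_-(L)_+$ is transversely isotopic to $L_+$. The second fact is the standard ``zig-zag cancels with push-off'' observation: in a model standard neighborhood of $L$, a negative stabilization inserts a zig-zag whose cusp is compatible with pushing off along $+v$, so the positive transverse push-off is unchanged. Granting these, if $S_-^a(L)$ is Legendrian isotopic to $S_-^b(L')$ then taking positive transverse push-offs gives $T = L_+ \simeq (S_-^a(L))_+ \simeq (S_-^b(L'))_+ \simeq L'_+ = T'$ transversely, which is what we want.

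For the ($\Rightarrow$) direction, the key input is the isotopy extension theorem for transverse knots: a transverse isotopy from $T$ to $T'$ extends to a global contact isotopy $\varphi_t$ of $(Y,\xi)$. Let $L$ be any chosen Legendrian approximation of $T$ and $L'$ any chosen Legendrian approximation of $T'$. Then $\widetilde L := \varphi_1(L)$ is a Legendrian approximation of $T'$ and is Legendrian isotopic to $L$. So it remains to compare the two Legendrian approximations $\widetilde L$ and $L'$ of the single transverse knot $T'$, and to show that any two such approximations become Legendrian isotopic after enough negative stabilizations.

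The main technical step, and the spot I expect to be the actual obstacle, is this uniqueness of Legendrian approximations up to negative stabilization. I would use the normal form for a neighborhood of $T$ contactomorphic to $(S^1 \times D_a^2, \ker(d\phi + r^2\, d\theta))$ and foliate this neighborhood by standard-form convex tori $T_n$ with dividing slope $-n$. Any Legendrian approximation sits as a ruling curve on some such $T_n$, and any two ruling curves of the same slope on the same convex torus are Legendrian isotopic within a collar. Moving from $T_n$ to $T_{n+1}$ corresponds to a basic slice, and an explicit computation in the model shows that this basic slice is negative and that a ruling curve on $T_{n+1}$ is Legendrian isotopic to the negative stabilization of a ruling curve on $T_n$ (this is equivalent to identifying $T_{n+1} = S_-(T_n)$, already recorded above). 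Consequently if $L_1$ sits on $T_{n_1}$ and $L_2$ on $T_{n_2}$, then after $\max(n_1,n_2) - n_i$ negative stabilizations both land on a common $T_N$ as Legendrian isotopic ruling curves. Applying this to $\widetilde L$ and $L'$ completes the argument. Once this lemma is established, both implications follow with minimal additional bookkeeping.
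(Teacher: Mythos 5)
First, a remark on the comparison: the paper does not prove this statement at all — it is quoted from \cite{EtnyreHonda01b} as background — so your argument has to stand on its own. Your ($\Leftarrow$) direction is correct and uses exactly the facts recorded in Section~2.4: the positive push-off depends only on the Legendrian isotopy class, is unchanged by negative stabilization, and $(T_n)_+$ is transversely isotopic to $T$. The reduction in ($\Rightarrow$) via the contact isotopy extension theorem is also correct, and you have correctly identified that everything rests on your ``main technical step'': any two Legendrian approximations of a single transverse knot admit a common negative stabilization. (Note that this stronger statement is indeed what is needed, e.g.\ for the way the theorem is used in the proof of Theorem~\ref{postransverse}; the literal ``they have Legendrian approximations\dots'' would already follow from your $\varphi_1(L)$ trick with no stabilizations.)

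That technical step, as you sketch it, is where the genuine gap lies, and the sketch essentially assumes what must be proved. A Legendrian approximation of $T'$ is a leaf on a torus inside \emph{some} model neighborhood contactomorphic to $(S^1\times D_a^2,\ker(d\phi+r^2\,d\theta))$, and the two approximations you must compare, $\widetilde L=\varphi_1(L)$ and $L'$, arise from two \emph{different} model neighborhoods, $\varphi_1(N)$ and $N'$, which share only the core $T'$. Your argument fixes one foliated model neighborhood and asserts that ``any Legendrian approximation sits as a ruling curve on some such $T_n$,'' but there is no reason a leaf taken inside $N'$ lies on any torus of the foliation of the other neighborhood. Inside one fixed neighborhood your observations are fine (consecutive tori cobound a negative basic slice, $T_{n+1}=S_-(T_n)$, parallel Legendrian curves on one torus are isotopic), but they only give uniqueness of the approximation \emph{relative to that one neighborhood}, which is not the theorem. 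The real content of the Etnyre--Honda result is exactly the comparison across different neighborhoods and tori: one must, for instance, push sufficiently deep approximating tori of one neighborhood inside the other and control the intermediate thickened tori with convex surface theory (the imbalance principle, the classification of tight structures on $T^2\times I$, or an isotopy-discretization argument in the spirit of the paper's Lemma~\ref{mainlem}) to conclude that the inner leaves are negative stabilizations of the outer ones. That argument is absent from your proposal. A smaller slip: the approximations are Legendrian divides on convex tori of dividing slope $-n$ (or leaves of a linear foliation), not ruling curves — a ruling curve by definition has slope different from the dividing slope — but this is cosmetic compared with the missing comparison step.
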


The \dfn{contact width} $\omega(K)$ of a knot is the supremum of dividing slopes of convex tori representing the knot type $K$. We say that $K$ has the \dfn{uniform thickness property} if
\begin{itemize}
  \item any solid torus representing the knot type $K$ can be thickened to a standard neighborhood of $L$, a Legendrian representative of $K$ with $\tb(L)=\tbb(K)$, and
  \item $\omega(K)$ is equal to $\tbb(K)$.
\end{itemize}  

We end this section by discussing the relation between bypasses and stabilization. Suppose that $N$ is a standard neighborhood of a Legendrian knot $L$. If the ruling slope of $N$ is larger than $\tb(L)+1$ and there is a bypass for $\partial N$ along the ruling curve, then attaching it will result in a new solid torus that bounds a solid torus $N'$ and $\partial N'$ will have dividing slope $\tb(L)+1$. The torus $N'$ is a neighborhood of a unique Legendrian knot $L'$ and it is not hard to show that $L$ is a stabilization of $L'$. The sign of the stabilization corresponds to the sign of the bypass.  

We have a similar way to destabilize a Legendrian knot that will be used frequently in the work below. Let $L$ be a Legendrian knot sitting on a convex surface $\Sigma$. If there is a disk embedded in $\Sigma$ with boundary an arc on $L$ and an arc on the dividing set of $\Sigma$ that is otherwise disjoint from $L$ and $\Gamma_\Sigma$, then one may use the disk to isotope $L$ to a curve $L'$ on $\Sigma$ that may be Legendrian realized. The knot $L'$ is a destabilization of $L$. In fact the disk on $\Sigma$ can be turned into a bypass for a neighborhood of $L$.

\subsection{Computation of classical invariants}\label{compclass}
When writing a rational number $q/p$, we will always take $p$ to be positive. (Recall in the introduction we mentioned that when considering cables we always assume that $p$ is positive since if $p$ is negative then we would consider this to be a cable of the knot with the reversed orientation.)

\begin{lemma}[Etnyre and Honda 2004, \cite{EH04}]\label{crot}
Let $N$ be a solid torus with convex boundary having two dividing curves of slope $s$. Suppose that $L$ is a $(p,q)$-curve on $\partial N$ that is either a Legendrian divide or a ruling curve. In an $I$-invariant neighborhood of $\partial N$ we can arrange that the ruling curves are meridional. Let $\mu$ be one of these curves. We can also arrange that the Seifert longitude $\lambda$ is a ruling curve or Legendrian divide on another copy of $\partial N$ in the $I$-invariant neighborhood. The rotation number of $L$ is computed by
\[
r(L)=p\, r(\lambda) + q\, r(\mu). 
\]
\end{lemma}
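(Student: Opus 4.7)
The plan is to exploit the $I$-invariance of $\xi$ near $\partial N$ to view the rotation number as a homomorphism on $H_1(T^2)$; the formula then follows immediately from the homological relation $[L]=p[\lambda]+q[\mu]$ on $\partial N$.

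I would work in the $I$-invariant neighborhood $V\cong T^2\times I$ of $\partial N$, where the contact structure has a local model $\xi=\ker(\cos\theta(t)\,dx+\sin\theta(t)\,dy)$. By the hypothesis of the lemma, $\mu$, $\lambda$, and $L$ are realized as Legendrian rulings or divides on parallel convex tori $T_\mu,T_\lambda,T_L\subset V$. Consider the nowhere-vanishing section $\tau(t)=-\sin\theta(t)\,\partial_x+\cos\theta(t)\,\partial_y$ of $\xi|_V$, which at each slice $T^2\times\{t\}$ is tangent to the Legendrian foliation direction. For any Legendrian ruling or divide $\gamma\subset V$, the tangent vector $T\gamma$ is a positive scalar multiple of $\tau$, so the winding of $T\gamma$ relative to $\tau$ is zero. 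Consequently $r(\gamma)$ equals the obstruction to extending $\tau|_\gamma$ to a nonvanishing section of $\xi$ over a Seifert surface $\Sigma_\gamma\subset Y$ for $\gamma$; this obstruction is computed as the pairing $\langle c_1(\xi,\tau),\Sigma_\gamma\rangle$, where $c_1(\xi,\tau)\in H^2(Y,V)$ is the relative first Chern class.

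Since $c_1(\xi,\tau)$ is a fixed cohomology class and $\tau$ is already defined on all of $V$, the pairing $\Phi([\gamma]):=\langle c_1(\xi,\tau),\Sigma_\gamma\rangle$ depends only on the boundary class $[\gamma]\in H_1(V)\cong H_1(T^2)$, and so defines a homomorphism $\Phi\colon H_1(T^2)\to\Z$. By the preceding paragraph, $\Phi([\gamma])=r(\gamma)$ for each of $\gamma\in\{L,\lambda,\mu\}$. Applying $\Phi$ to $[L]=p[\lambda]+q[\mu]$ yields the desired formula $r(L)=p\,r(\lambda)+q\,r(\mu)$.

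The main technical point to verify is the well-definedness and additivity of $\Phi$, i.e., that two Seifert surfaces for homologous Legendrian curves in $V$ differ by closed surfaces on which $c_1(\xi,\tau)$ evaluates trivially. In the setting of the paper, where the ambient manifold is a rational homology sphere (and in particular $S^3$), the connecting map $H_2(Y,V)\to H_1(V)$ is surjective and any two chains realizing the same boundary class differ by a cycle supported in $V$; since $\tau$ is globally defined on $V$, such a difference contributes zero to the obstruction. This is the standard relative obstruction-theoretic argument, and once it is in place the additivity of $\Phi$ is automatic from the linearity of the cohomology pairing.
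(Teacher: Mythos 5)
Your overall strategy---interpret $r$ as the obstruction to extending the tangent vector over a Seifert surface, fix a reference section of $\xi$ over the neighborhood $V$ of $\partial N$, and let the homological relation $[L]=p[\lambda]+q[\mu]$ do the work---is the standard route to this formula (the paper itself gives no proof and simply cites \cite{EH04}). But the step you treat as immediate, namely that a single nonvanishing section $\tau$ of $\xi|_V$ is positively proportional to the tangent vector of every Legendrian ruling curve and Legendrian divide involved, is both unjustified and is exactly where the content of the lemma sits. The model you write down, $\xi=\ker(\cos\theta(t)\,dx+\sin\theta(t)\,dy)$, is not an $I$-invariant neighborhood of a convex torus: if $\theta$ is nonconstant the slices are pre-Lagrangian tori with rotating linear foliations (so the thickened torus carries twisting, which is precisely what must be excluded from the computation), while if $\theta$ is constant all slices have the same linear foliation and curves of the three different slopes $q/p$, $0$, $\infty$ cannot all be leaves. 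In the actual situation $V$ is $I$-invariant, so every parallel copy of $\partial N$ carries the same characteristic foliation; to make the rulings meridional on one copy and to make $\lambda$ a ruling or divide on another one must modify those copies by Giroux flexibility, after which there is no single ``foliation direction'' section positively tangent to $L$, $\mu$, and $\lambda$ simultaneously, and along the Legendrian divides the natural candidate (the characteristic direction field) vanishes.

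This is not a removable technicality. Once the (routine) additivity of your $\Phi$ is in place, the existence of a nonvanishing section with zero winding along all three curves is equivalent to the relation $w(L)=p\,w(\lambda)+q\,w(\mu)$ among the windings of the tangent vectors relative to any one fixed trivialization of $\xi|_V$, because $[\lambda]$ and $[\mu]$ generate $H_1(V)$ and changing the section only shifts $w$ by a class in $H^1(V;\Z)$; that relation is essentially the lemma. So as written the argument assumes what it needs to prove, while the part you flag as the main technical point (well-definedness of $\Phi$ in $S^3$) is the easy part. To complete the proof you must actually control the winding of the tangent of a ruling curve of arbitrary slope $r\neq s$, and of a Legendrian divide, against one fixed section over the invariant neighborhood---for instance by working with the standard-form foliation ($2n$ singular circles of slope $s$, rulings crossing them) and checking the tangent direction never makes a full turn relative to a $t$-invariant reference section, including where the ruling crosses the divides---or else replace the winding argument altogether by evaluating the relative Euler class on a convex surface in $V$ with boundary $L\cup(-p\lambda)\cup(-q\mu)$ and showing that contribution vanishes using $I$-invariance.
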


\begin{lemma}[Etnyre and Honda 2004, \cite{EH04}]\label{ctb}
Suppose that $N$ is a solid torus with convex boundary having two dividing curves of slope $s$. Let $L$ be a $(p,q)$-curve on $\partial N$ that is either a Legendrian divide or a ruling curve, then
\[
\tb(L)= pq-|s\bigcdot q/p|.
\]
\end{lemma}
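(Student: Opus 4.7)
The plan is to decompose $\tb(L)$ as the contact twisting of $\xi$ along $L$ measured against the framing from the convex surface $\partial N$, plus the integer difference between that surface framing and the Seifert framing of $L$:
\[
\tb(L) = \tw(L, \operatorname{Fr}_{\partial N}) + \bigl(\operatorname{Fr}_{\partial N} - \operatorname{Fr}_{\text{Seifert}}\bigr).
\]

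First I would compute the contact twisting. Put $\partial N$ in standard form, taking the ruling slope to be $q/p$ in the case that $L$ is a ruling curve (so in particular $q/p \neq s$), while in the Legendrian divide case $L$ is automatically parallel to $\Gamma_{\partial N}$ and $q/p = s$. Since $L$ is Legendrian on a convex torus, the standard formula from convex surface theory yields
\[
\tw(L, \operatorname{Fr}_{\partial N}) = -\tfrac{1}{2}|L \cap \Gamma_{\partial N}|.
\]
For a Legendrian divide this is $0 = -|s \bigcdot q/p|$, and for a ruling curve of slope $q/p \neq s$ each of the two dividing curves meets $L$ minimally in $|s \bigcdot q/p|$ points, contributing $-|s \bigcdot q/p|$. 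In both cases the contribution equals $-|s \bigcdot q/p|$.

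Next I would show $\operatorname{Fr}_{\partial N} - \operatorname{Fr}_{\text{Seifert}} = pq$. Let $L'$ be a parallel push-off of $L$ along $\partial N$ (realized on a nearby convex torus in an $I$-invariant collar); then the framing difference equals $\operatorname{lk}(L, L')$. Since $K$ is null-homologous it admits a Seifert surface $\Sigma_K$, and I would build a Seifert surface $F$ for $L$ by taking $p$ parallel copies of $\Sigma_K$ sitting outside $N$ (boundaries giving $p$ copies of $\lambda$), band-summed along $\partial N$ with $q$ meridional disks inside $N$ (boundaries giving $q$ copies of $\mu$). After pushing $L'$ off $\partial N$ to the exterior of $N$, it is disjoint from the meridional disks and, after a careful placement of the bands, from the band-sum region; while each of the $p$ exterior Seifert pieces meets $L'$ algebraically in $|[L'] \cdot [\lambda]| = q$ points on $\partial N$, giving $\operatorname{lk}(L, L') = pq$ in total.

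Combining these two steps yields $\tb(L) = pq - |s \bigcdot q/p|$, as claimed. The main obstacle is the linking computation of step (ii): while intuitively clear, the sign and orientation bookkeeping in the intersection count, especially in the band-sum region, is delicate, and one must verify that the result is independent of the Seifert surface chosen. A slick alternative is to reduce the framing-difference calculation to the local unknotted model, in which $L$ and $L'$ become parallel $(p,q)$-torus knots on a standard Heegaard torus in $S^3$ and the linking is $pq$ by a direct diagrammatic count; this reduction is justified because both framings are determined locally in a tubular neighborhood of $\partial N$ once the Seifert data of $L$ has been fixed.
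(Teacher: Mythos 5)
Your proof is correct and is essentially the standard argument: the paper states this lemma without proof, citing \cite{EH04}, and the computation there is exactly your decomposition $\tb(L)=\tw(L,\partial N)+pq$, where $\tw(L,\partial N)=-\tfrac12\,\#(L\cap\Gamma_{\partial N})=-|s\bigcdot q/p|$ for a ruling curve or Legendrian divide, and $pq$ is the difference between the torus (cabling-annulus) framing and the Seifert framing. The only slightly loose point is your closing ``local model'' justification---the Seifert framing is not local data---but the framing difference $\operatorname{lk}(L,L')=pq$ follows directly from the fact that $[\lambda]=0$ and $[\mu]$ generates $H_1(S^3\setminus N)$ (so the push-off $L'$ meets each of the $p$ copies of the Seifert surface of $K$ algebraically $q$ times and misses the meridian disks), so nothing essential is missing.
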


From above we see that it will be useful to compute the number of intersections between two curves. We discuss bounds on such intersections in the next few lemmas. 

\begin{lemma}\label{signs}
  When considering rational numbers we will assume their denominators are positive. 
  Given any rational number $q/p$, then we have $s/r \bigcdot q/p \geq 0$ if $s/r \geq q/p$, and $s/r \bigcdot q/p< 0$ if $s/r < q/p$.
\end{lemma}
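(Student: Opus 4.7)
The plan is to unwind the definition of $\bigcdot$ and then use elementary manipulations of inequalities, keeping careful track of the sign conventions. By definition $s/r \bigcdot q/p = sp - rq$, and by the standing assumption both denominators $r$ and $p$ are positive, so in particular $rp > 0$.

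First I would handle the weak inequality case. Suppose $s/r \geq q/p$. Since $r > 0$ and $p > 0$, we may multiply both sides of this inequality by the positive quantity $rp$ without reversing it. This yields $sp \geq rq$, which is exactly the statement $s/r \bigcdot q/p = sp - rq \geq 0$.

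For the strict inequality case, suppose $s/r < q/p$. The same multiplication by $rp > 0$ preserves strictness and gives $sp < rq$, that is, $s/r \bigcdot q/p = sp - rq < 0$.

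There is no real obstacle here; the only subtlety to be careful about is the convention that denominators are positive, since without that stipulation multiplying an inequality by $rp$ could flip its direction. The lemma is thus a direct consequence of the standing convention together with the definition of $\bigcdot$.
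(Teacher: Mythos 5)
Your proof is correct and follows essentially the same route as the paper: clear denominators using the convention that $r,p>0$ to pass from $s/r \geq q/p$ (resp.\ $<$) to $sp \geq rq$ (resp.\ $<$), then conclude from the definition $s/r \bigcdot q/p = sp - rq$. You merely make explicit the multiplication by $rp>0$ that the paper's proof leaves implicit.
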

\begin{proof}
If $s/r \geq q/p$, we have $sp \geq rq$. Then
\[
  s/r \bigcdot q/p = sp - rq \geq 0.
\]
Similarly, If $s/r < q/p$, we have $sp < rq$. Then
\[
  s/r \bigcdot q/p = sp - rq < 0.
\]
\end{proof}

\begin{lemma}\label{intersections}
If given two integers $k,l$ such that $k<l< q/p$, then 
\[
|k\bigcdot q/p| > |l\bigcdot q/p|.
\]
Moreover, if $s/r\in (k-1,k)$ and $k<q/p$, then 
\[
|s/r\bigcdot q/p| > |(k-1)\bigcdot q/p|>|k\bigcdot q/p|.
\]
\end{lemma}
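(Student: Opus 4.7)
The plan is to prove both assertions by direct computation from the definition $a/b \bigcdot c/d = ad-bc$, writing each integer $n$ as $n/1$.

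First I would dispatch the first inequality. Since $k < q/p$ and $p > 0$, we have $kp < q$, and similarly $lp < q$, so both $k \bigcdot q/p = kp-q$ and $l \bigcdot q/p = lp - q$ are negative. Thus
\[
|k \bigcdot q/p| - |l \bigcdot q/p| = (q-kp) - (q-lp) = (l-k)p > 0,
\]
which gives the first inequality. The same calculation applied to the pair $k-1 < k$ immediately gives the right-most inequality $|(k-1) \bigcdot q/p| > |k \bigcdot q/p|$ of the ``moreover'' statement.

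The substantive step is the remaining inequality $|s/r \bigcdot q/p| > |(k-1) \bigcdot q/p|$. Since $s/r \in (k-1,k)$ is not an integer, its reduced denominator satisfies $r \geq 2$. The condition $k-1 < s/r < k$ is equivalent to $0 < s-(k-1)r < r$, so I would write $s = (k-1)r + s'$ with $s' \in \{1,\dots,r-1\}$. Because $s/r < k < q/p$, we have $sp < rq$, hence
\[
|s/r \bigcdot q/p| = rq - sp = r\bigl(q-(k-1)p\bigr) - s' p.
\]
The target inequality $rq - sp > q - (k-1)p$ is therefore equivalent to
\[
(r-1)\bigl(q - (k-1)p\bigr) > s' p.
\]
Now $q - (k-1)p = (q-kp) + p$ and $q - kp \geq 1$ since $k < q/p$ forces $kp < q$ and both sides are integers; consequently $q - (k-1)p > p$ strictly. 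Combined with $s' \leq r-1$, this yields
\[
(r-1)\bigl(q-(k-1)p\bigr) > (r-1)p \geq s' p,
\]
completing the proof.

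The only subtlety is the need for $r \geq 2$, which is where the hypothesis that $s/r$ lies in an open interval between consecutive integers is used; this is the step I would flag as requiring explicit comment, although no part of the argument is genuinely difficult since everything reduces to elementary inequalities among the integers $p, q, r, s, k$.
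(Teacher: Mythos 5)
Your proof is correct, and it is essentially the paper's argument in different clothing. For the first inequality (and the right-most inequality of the ``moreover'' part) both you and the paper just compute $k\bigcdot q/p = kp-q$ and use negativity from the sign lemma. For the substantive inequality $|s/r\bigcdot q/p| > |(k-1)\bigcdot q/p|$, the paper appeals to the Farey tessellation: since $s/r$ lies strictly between $k-1$ and $k$, one can write $s/r = \frac{a(k-1)}{a}\oplus\frac{bk}{b}$ with $a,b\geq 1$, whence $|s/r\bigcdot q/p| = a\,|(k-1)\bigcdot q/p| + b\,|k\bigcdot q/p|$, and positivity of $a,b$ finishes it. Your division-with-remainder step $s=(k-1)r+s'$ with $0<s'<r$ is exactly this decomposition in disguise (take $b=s'$, $a=r-s'$), so the two arguments are equivalent; yours trades the one-line bilinearity identity for explicit integer inequalities, making it slightly more elementary but a bit longer, while the paper's phrasing makes the generalization in Remark~\ref{genintersect} (replacing $k-1,k$ by consecutive vertices of the Farey path to $q/p$) immediate. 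Your flagged use of $r\geq 2$ is fine and corresponds to $a,b\geq 1$ in the paper's version.
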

\begin{proof}
Write $q=pm+r$ with $r\in [0,p-1]$. Now 
\[
k\bigcdot q/p= kp-mp -r= (k-m)p -r< (l-m)p-r=l\bigcdot q/p,
\]
and from Lemma~\ref{signs} these numbers are negative. Thus the first inequality is established. 

Since $s/r$ is strictly between $k-1$ and $k$, we know by the construction of the Farey tessellation that there are positive integers $a$ and $b$ such that 
\[
  s/r = \frac{a(k-1)}{a} \oplus \frac{bk}{b}.
\]
Now we have 
\begin{align*}
  |s/r\bigcdot q/p| &= a|(k-1) \bigcdot  q/p| + b |k \bigcdot q/p|\\
  & > |(k-1) \bigcdot q/p|\\
  & > |k \bigcdot q/p|.
\end{align*}
The second line follows from the fact that $a$ and $b$ are both positive, and the third line follows from the first inequality established above.
\end{proof}

\begin{remark}\label{genintersect}
By changing coordinates on the torus one can generalize the lemma as follows. Given $q/p$ let $\lfloor q/p \rfloor=a_0, a_1, \ldots, a_n=q/p$ be the shortest path in the Farey graph from $\lfloor q/p \rfloor$ clockwise to $q/p$. We have $|a_{i-1}\bigcdot q/p|>|a_i\bigcdot q/p|$ and if $s\in (a_{i-1},a_i)$ then $|s\bigcdot q/p|\geq|a_{i-1}\bigcdot q/p|>|a_i\bigcdot q/p|$.
\end{remark}


\subsection{Cabling tori}\label{cabtor}
We will need to understand annuli in the complement of knots in $S^3$. To this end we first recall the following well-known result.
\begin{lemma}\label{annuli}
Let $K$ be a knot in $S^3$ and let $X_K$ denote the complement of a neighborhood $N$ of $K$. If $A$ is an essential annulus in $X_K$ then either
\begin{enumerate}
\item $K$ is a connected sum $K_1\# K_2$ and $A$ is the the intersection of the connect sum sphere with $X_K$ and $A\cap N$ has slope $\infty$ on $\partial N$, or
\item $K$ is a $(p,q)$-cable of some knot $K'$ and $A$ is the intersection of the boundary of a neighborhood of $K'$ containing $K$ with $X_K$ and $A\cap N$ has slope $pq$ on $\partial N$. 
\end{enumerate} 
\end{lemma}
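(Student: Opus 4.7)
Since $A$ is incompressible in $X_K$, its two boundary components are essential and parallel on $\partial N$, with a common well-defined slope $\rho$ (an inessential boundary component would bound a disk in $N$ and yield a compressing disk for $A$, contradicting incompressibility). I will split into cases based on whether $\rho = \infty$.

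Suppose first that $\rho = \infty$. Then the two meridional circles $\partial A$ bound disjoint meridional disks $D_1, D_2 \subset N$, and capping off produces an embedded 2-sphere $S := A \cup D_1 \cup D_2 \subset S^3$ meeting $K$ transversely in exactly two points. By Alexander's theorem, $S$ bounds 3-balls $B_1, B_2$, decomposing $K$ into two properly embedded arcs $\alpha_i = K \cap B_i$. If some $(B_i, \alpha_i)$ is a trivial arc-in-ball pair, the witnessing boundary-parallelism produces a boundary-compressing disk for $A$ in $X_K$ (equivalently, exhibits $A$ as boundary-parallel), contradicting the essentiality hypothesis. Hence both arcs are non-trivially knotted, giving a nontrivial connected sum decomposition $K = K_1 \# K_2$ with $A = S \cap X_K$ the decomposing annulus of slope $\infty$.

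Suppose instead that $\rho \neq \infty$. Let $A' \subset \partial N$ be one of the two annuli with $\partial A' = \partial A$, and form the closed embedded torus $T := A \cup A' \subset S^3$. Every embedded torus in $S^3$ bounds a solid torus on at least one side; let $V$ be such a side and $K' \subset V$ its core. Writing $\rho = q/p$ with $\gcd(p,q) = 1$ in the Seifert framing, the core of $A'$ is a $(p,q)$-curve on $\partial N$ representing $p[K]$ in $H_1(N) = \Z\langle[K]\rangle$. On $T = \partial V$ this core is a primitive simple closed curve whose class in $H_1(V) = \Z\langle[K']\rangle$ is some multiple of $[K']$, and comparing the two expressions constrains the primitive slope on $T$ in the $(\lambda_{K'}, \mu_{K'})$-framing.

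The final step is to identify $K$ as a $(p,q)$-cable of $K'$. Essentiality of $A$ excludes the degenerate case in which $K$ is isotopic to $K'$ (this would force $V \setminus \text{int}(N) \cong T^2 \times I$ and $A$ to become boundary-parallel), so $K$ genuinely winds around $K'$ in $V$; the homological comparison above then forces the cabling parameters to be exactly $(p,q)$ and shows $K$ is isotopic in $S^3$ onto $T$ as a $(p,q)$-curve. The slope-$pq$ assertion for $A \cap \partial N$ in Seifert framing is the standard linking-number computation relating the cabling-torus framing to the Seifert framing of a $(p,q)$-cable. The main obstacle I anticipate is this last identification: showing $K$ is actually isotopic to a specific $(p,q)$-curve on $\partial V$, which will require combining the classification of essential surfaces in solid tori (where essential annuli are boundary-parallel) with the essentiality of $A$ on both sides of $T$ in $X_K$.
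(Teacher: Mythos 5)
Your meridional case is fine and is essentially the paper's first case (with the useful extra observation that essentiality forces both ball--arc pairs, hence both summands, to be nontrivial). The genuine gap is in the case $\rho\neq\infty$. You treat all non-meridional slopes $q/p$ uniformly and assert that a homological comparison ``forces the cabling parameters to be exactly $(p,q)$ and shows $K$ is isotopic in $S^3$ onto $T$.'' That is precisely the point requiring proof, and when $p\geq 2$ it is not even the statement one should be aiming for: in that case the components of $\partial A$ are $(p,q)$-curves on $\partial N$, not longitudes, so the core $K$ of $N$ is not isotopic onto $T=A\cup A'$ at all; what must be shown is that an essential annulus with such a (non-integral, non-meridional) boundary slope cannot exist. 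The homology count you propose (the core of $A'$ represents $p[K]$ in $H_1(N)$ and $pw[K']$ in $H_1(V)$, with $w$ the winding number of $K$ in $V$) gives only a divisibility relation; it neither rules out $p\geq 2$ nor identifies the curve type of $K$ on $\partial V$ in the integral case. In addition, you never address which side of $T$ is the solid torus $V$ --- the lemma requires a neighborhood of $K'$ \emph{containing} $K$ --- and the claim that ``$K$ isotopic to $K'$'' would force $V\setminus \mathrm{int}(N)\cong T^2\times I$ is asserted rather than argued. You flag the final identification as the main obstacle yourself, and indeed it is exactly the missing content.

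For comparison, the paper does not use the single torus $T=A\cup A'$ but the piece $P=N\cup(\text{an } I\text{-invariant neighborhood of } A)$, and splits by boundary slope. If the slope is integral, $P\cong T^2\times[0,1]$, and since every torus in $S^3$ bounds a solid torus one obtains the solid torus with core $K'$ and the cable structure directly. If the slope is neither integral nor meridional, $P$ is Seifert fibered over the annulus with one exceptional fiber of order $p\geq 2$; filling in the complementary solid tori would exhibit $S^3$ as Seifert fibered over $S^2$ with three exceptional fibers, which is impossible (a fiber-trivial filling makes $A$ boundary-parallel, and otherwise the fundamental group is nontrivial). Some argument of this kind is needed in your approach to exclude the fractional slopes, and in the integral case to show that $K$ really is the $(p,q)$-curve on the cabling torus with $\partial A$ of slope $pq$; as written, your proposal assumes these conclusions rather than deriving them.
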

While well-known, we cannot find a reference that contains all the details in the statement above, so we provide a sketch of the proof.
\begin{proof}
  Let $A_1$ and $A_2$ be the annuli that $\partial A$ breaks $\partial N$ into and suppose $A \cap N$ is a pair of meridians on $\partial N$. Then we can construct a sphere in $S^3$ by gluing two meridional disks and $A$ together. This sphere separates $A_1$ and $A_2$ so it is clear that $K$ is a composite knot. 

  Now suppose $A \cap N$ is a pair of longitudes on $\partial N$. Since any torus in $S^3$ bounds a solid torus, we know that the union of $N$ and a neighborhood $A$ is a thickened torus and bounds a solid torus. Let $K'$ be a core of this solid torus. Clearly $K$ is a cable of $K'$ and $A$ is as claimed in the lemma. 

  Suppose $A \cap N$ is a pair of non-trivial cables on $\partial N$. In this case, the union of $N$ and a neighborhood of $A$ is not a thickened torus. Instead, it is a Seifert fibration over an annulus with a singular fiber. Since each boundary component of this fibration bounds a solid torus, $S^3$ admits a Seifert fibration over a sphere with three singular fibers. If one of these singular fibers is trivial, then $A$ becomes boundary-parallel, which is a contradiction. If all three singular fibers are non-trivial however, the fundamental group of the Seifert fibration is non-trivial (\textit{cf.}~\cite{Brin07}), which implies that the manifold cannot be $S^3$.  


\end{proof}

\begin{lemma}\label{smooth-isotopy}
  Let $T_1$ and $T_2$ be two tori in $S^3$ that bound solid tori whose core is in the knot type $K$. Suppose that $K_{(p,q)}$ is a $(p,q)$-cable of $K$ that lies on both $T_1$ and $T_2$ with $p\not= 1$ (that is the cable is not isotopic to $K$). Then there is a smooth isotopy from $T_1$ to $T_2$ fixing $K_{(p,q)}$.
\end{lemma}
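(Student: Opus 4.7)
The plan is to reduce the statement to a uniqueness result for essential annuli in the exterior of $K_{(p,q)}$. Let $X = S^3 \setminus \nu(K_{(p,q)})$ and $A_i = T_i \cap X$, which is an annulus because $K_{(p,q)}$ is an essential simple closed curve on $T_i$. After a small isotopy near $K_{(p,q)}$ that fixes $K_{(p,q)}$ pointwise, we may assume each $A_i$ is properly embedded with $\partial A_i$ consisting of two parallel copies of $K_{(p,q)}$ on $\partial \nu(K_{(p,q)})$; because $A_i$ sits inside $T_i$, both boundary components have slope $pq$ (the surface framing induced by $T_i$).

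First I would verify that each $A_i$ is essential in $X$. Incompressibility: any compressing disk for $A_i$ in $X$ would yield a compressing disk for $T_i$ in $S^3 \setminus K_{(p,q)}$. Since $p\neq 1$, the class $p[\lambda] + q[\mu] \in H_1(T_i)$ is not a meridian of $V_i$, so $K_{(p,q)}$ represents a non-trivial element of $\pi_1(V_i)$ and $T_i$ does not compress to the $V_i$ side; a compression to the other side of $T_i$ would exhibit $K_{(p,q)}$ as a curve bounding a disk in $S^3 \setminus V_i^\circ$, contradicting that it represents a non-trivial class in $H_1(T_i)$. Boundary-incompressibility of $A_i$ follows in the same way, since any boundary-compression would produce a compressing disk for $T_i$ after surgering. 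Hence $A_i$ is essential.

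Next, both $A_1$ and $A_2$ are essential annuli in the irreducible, boundary-irreducible manifold $X$ with the same boundary slope $pq$. By Lemma~\ref{annuli} each is the cabling annulus recording that $K_{(p,q)}$ is a $(p,q)$-cable of the core of $V_i$, which is $K$ by hypothesis. I would now invoke the characteristic submanifold theorem of Jaco--Shalen and Johannson (equivalently, uniqueness of the JSJ decomposition): the Seifert-fibered cable-space piece of $X$ adjacent to $\partial X$ is unique up to ambient isotopy of $X$ fixing $\partial X$ pointwise, and $A_i$ is precisely the frontier of this piece. Consequently there is an ambient isotopy $\Phi_t$ of $X$, fixing $\partial X$ pointwise, with $\Phi_1(A_1) = A_2$. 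Extending $\Phi_t$ by the identity on $\nu(K_{(p,q)})$ yields an ambient isotopy of $S^3$ fixing $K_{(p,q)}$ pointwise that carries $T_1$ to $T_2$.

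The main obstacle is making step three rigorous when the ambient JSJ decomposition is degenerate, namely the case $K = $ unknot, where $K_{(p,q)}$ is a torus knot and $X$ is itself Seifert fibered (no splitting tori). In that situation I would not appeal to JSJ at all but instead argue directly inside the Seifert fibration of $X$: essential annuli in a Seifert fibered space with non-empty boundary can be isotoped to be vertical (a union of fibers) or horizontal, and both $A_i$ must be vertical because their boundary slopes equal the regular-fiber slope $pq$ on $\partial \nu(K_{(p,q)})$. Two vertical essential annuli whose boundaries are isotopic in $\partial X$ are isotopic in $X$ through vertical annuli, giving the required isotopy fixing $\partial X$, which then extends to the desired isotopy of $S^3$ fixing $K_{(p,q)}$.
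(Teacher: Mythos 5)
Your overall strategy (reduce to uniqueness of the cabling annulus in the exterior $X$ of $K_{(p,q)}$, using Seifert-fibered structure/verticality) is in the right spirit, but the key step in your main case does not hold as stated. The annulus $A_i$ is \emph{not} the frontier of the characteristic Seifert-fibered (cable space) piece of $X$: that piece meets $\partial X$ in the entire boundary torus, so its frontier is a torus in the interior of $X$ (a pushoff of $T_i$), not an annulus with boundary on $\partial X$. Consequently, uniqueness of the JSJ/characteristic submanifold only tells you that the decomposing tori (or the pieces) match up after isotopy; it does not, by itself, produce an ambient isotopy of $X$ --- let alone one fixing $\partial X$ pointwise --- carrying the specific annulus $A_1$ to $A_2$. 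To extract that from the characteristic submanifold machinery you would still need to (a) isotope each $A_i$ into the cable-space piece, which requires ruling out essential sub-annuli of $A_i$ lying in the companion exterior; this is exactly where the hypothesis $p\neq 1$ must enter, since such sub-annuli would be essential annuli in a knot exterior with non-integral boundary slope $q/p$, impossible by Lemma~\ref{annuli} (in your write-up, $p\neq 1$ is only used for incompressibility, which is not where the real work is); (b) classify essential annuli with both boundary components on $\partial X$ inside the cable space (they are vertical and unique up to isotopy); and (c) upgrade ``isotopic'' to an isotopy fixing $K_{(p,q)}$, since annuli with the same boundary need not be isotopic rel boundary just because they are isotopic.

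Note that steps (a) and (b) are precisely what the paper does, without JSJ language: it pushes $A_2$ into a slightly enlarged solid torus $N'$ around the $T_1$-side torus by removing intersection curves with $\partial N'$ (trivial circles by irreducibility, essential circles because an essential annulus outside $N'$ would violate Lemma~\ref{annuli} due to the slope $q/p\notin\Z$), and then observes that $N'\setminus N$ is Seifert fibered over an annulus with one exceptional fiber, where both $A_1$ and $A_2$ are vertical and the essential vertical annulus with boundary on the given boundary component is unique. You carry out the verticality argument only in your ``degenerate case'' ($K$ the unknot), but it --- together with the slope argument in (a) --- is what is needed in every case; as written, the general case of your proof rests on an identification and an appeal to JSJ uniqueness that do not deliver the conclusion.
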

\begin{proof}
We can isotope $T_1$ and $T_2$ so they agree in a neighbourhood of $K_{(p,q)}$ and take a small neighbourhood $N$ of $K_{(p,q)}$ that intersects $T_1$ and $T_2$ where they agree. Now let $A_1$ and $A_2$ be the part of $T_1$ and $T_2$, respectively, outside of the interior of $N$. We see that $A_1$ and $A_2$ agree near $\partial N$ and we will show that we can isotope them to agree everywhere. 

Let $N'$ be a solid torus that is slightly larger than the one bounded by $T_1$, so that $N'$ contains $N$. We can assume that $A_2$ is transverse to $\partial N'$. So $A_2\cap \partial N'$ consists of simple closed curves that consist of parallel homologically essential curves and some homologically non-essential curves on $\partial N'$. Since $A_2$ is essential in $S^3\setminus N$ we know that the homologically non-essential curves on $\partial N'$ are also non-essential on $A_2$. So any homologically non-essential curve bounds a disk on both $\partial N'$ and $A_2$. The union of these disks bounds a ball in $S^3$ (that is contained in $S^3\setminus N$). So using a standard inner most disk argument we can use this ball to guide an isotopy to remove the non-essential intersection curves.  

We now have the homologically essential intersections of $\partial N'$ and $A_2$ to consider. Suppose two such curves cobound an annulus $A'$ in the complement of $N'$. Notice that the intersections of $A_2$ with $\partial N'$ are parallel to $K_{(p,q)}$ and thus have slope $q/p\not\in\Z$ on $\partial N'$. If $A'$ was essential we see that Lemma~\ref{annuli} implies that the boundary of $A'$ has an integral slope on $\partial N'$. Since this is not the case we see that $A'$ is not a homologically essential annulus. Thus it is boundary parallel in the complement of $N'$ ({\em cf.\ }\cite[Proposition~9.3.9]{Martelli:book}) and can be isotoped to $\partial N'$ and then into $N'$, thus removing the two curves on $\partial A'$ from $A_2\cap \partial N'$. Continuing with this we can remove all intersections between $A_2$ and $\partial N'$. Thus $A_2$ is contained in $N'$. 

Now one can easily see that $N'\setminus N$ is a Seifert fibered space over the annulus with one singular fiber (just note that there is a Seifert fibration of $N'$ with regular fibers parallel to $K_{(p,q)}$ and then remove a neighborhood of a regular fiber to get $N'\setminus N$). Now $A_1$ and $A_2$ are both essential annuli in $N'\setminus N$ that include fibers of the fibration at their boundary. Thus they must both be vertical annuli (that is unions of fibers) since any incompressible surface in a Seifert fibered space must be vertical or horizontal. They are both vertical annuli with boundary on the same boundary component of $N'\setminus N$. There are two such annuli (up to isotopy), one whose projection to the base annulus contains the projection of the singular fiber and the one that does not. The one that does not contain the projection of the singular fiber is not essential in $N'\setminus N$. So they are isotopic in $N'\setminus N$. 
\end{proof}

\section{Positive cables}\label{pcc}
In this section we prove our theorems about positive cables. We begin by putting such cables on boundaries of neighborhoods of Legendrian knots. 

\begin{lemma}\label{positive-onboundary}
If $q/p> \lceil \omega(K) \rceil$ then any Legendrian $L\in \mathcal{L}(K_{(p,q)})$ can be placed on a convex torus $T$ that bounds a solid torus whose core is in the knot type $K$. Moreover, we may assume that $T$  is the boundary of a standard neighborhood of a Legendrian $L'\in \mathcal{L}(K)$. (We note that when we do this, the characteristic foliation on $T$ might not be standard and in particular $L$ might not be a ruling curve.)
\end{lemma}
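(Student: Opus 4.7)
The plan is to take $L\in\mathcal{L}(K_{(p,q)})$ and realize it on the boundary of a standard neighborhood of a Legendrian representative $L'\in\mathcal{L}(K)$, by first placing $L$ on a convex torus bounding a solid torus whose core is in the knot type $K$, and then shrinking that solid torus so its boundary has integer dividing slope.

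First, since $L$ is smoothly a $(p,q)$-cable of $K$, take a smoothly embedded torus $\tilde T$ bounding a tubular neighborhood of a smooth representative of $K$ with $L$ lying on $\tilde T$ as a $(p,q)$-curve. A $C^\infty$-small perturbation combined with the Legendrian realization principle (applicable because $L$ is essential on $\tilde T$) yields a convex torus $T_0$ with $L$ Legendrian on it. Let $V$ be the solid torus bounded by $T_0$; by definition of the contact width, the dividing slope $s$ of $T_0$ satisfies $s\leq\omega(K)$, and the hypothesis $q/p>\lceil\omega(K)\rceil$ gives $s<q/p$.

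If $s$ is already an integer, $T_0$ is itself the boundary of a standard neighborhood of a Legendrian in $\mathcal{L}(K)$ and we are done. Otherwise set $n=\lfloor s\rfloor$. By Lemma~\ref{anyslopeless}, inside $V$ there is a convex torus $T$ parallel to $T_0$ with two dividing curves of integer slope $n$, bounding a solid torus $V'\subset V$ that is a standard neighborhood of some Legendrian $L'\in\mathcal{L}(K)$ with $\tb(L')=n$. The region $R$ between $T$ and $T_0$ is then a minimally twisting $T^2\times I$ with boundary slopes $n<s<q/p$.

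The remaining and main step is to exhibit $L$ on $T$ via a Legendrian isotopy through $R$. Put $T_0$ in standard form with ruling slope $q/p$ by Giroux flexibility so that $L$ becomes a ruling curve, and take a convex annulus $A\subset R$ with one boundary component $L$ on $T_0$ and the other a ruling curve of slope $q/p$ on $T$. By Lemma~\ref{intersections} and Remark~\ref{genintersect} applied to $n<s<q/p$, we have $|n\bigcdot q/p|>|s\bigcdot q/p|$, so $\partial A$ intersects the dividing set of $T$ strictly more times than that of $T_0$. Decomposing $R$ into basic slices via the classification of Section~\ref{cfb}, the bypass-finding principle at the end of Section~\ref{bypasses} produces in each basic slice a bypass that, by Giroux flexibility, can be positioned disjoint from $L$. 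Attaching these bypasses successively transports $L$ across $R$ onto $T$, perturbing the characteristic foliation on $T$ into a non-standard form and thereby placing $L$ on $T$ as a Legendrian curve that is in general not a ruling curve, as the note in the lemma indicates. The main obstacle throughout is ensuring the bypasses can be positioned away from $L$ so that its Legendrian isotopy class is preserved; the intersection inequality together with Giroux flexibility is what supplies this freedom.
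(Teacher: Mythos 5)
Your strategy---first make the cabling torus convex with $L$ on it, then transport $L$ across the region $R$ onto the inner integer-slope torus by bypass attachments---has two genuine gaps, and the second one is fatal. For the first step, to make the cabling torus convex \emph{without moving $L$} you need the twisting condition $\tw(L,\tilde T)\le 0$, i.e.\ $\tb(L)\le pq$, and this is not free: it is exactly the first thing that must be proved (if the twisting were nonnegative, then after stabilizing, $L$ would sit on a convex torus whose dividing curves are parallel to $L$, hence of slope $q/p$, contradicting $q/p>\lceil \omega(K)\rceil$). The Legendrian realization principle produces \emph{some} Legendrian curve on a perturbed convex torus; it does not place the given Legendrian $L$ on it. For the same reason you cannot ``put $T_0$ in standard form so that $L$ becomes a ruling curve'': $L$ may meet $\Gamma_{T_0}$ non-minimally (this is precisely the caveat in the statement of the lemma), in which case $\tb(L)$ is strictly smaller than that of a ruling curve and no isotopy of the characteristic foliation rel $\Gamma_{T_0}$ can make $L$ one.

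The main step then fails on several counts. The intersection inequality is backwards: for non-integer $s$ with $n=\lfloor s\rfloor<q/p$, Lemma~\ref{intersections} gives $|s\bigcdot q/p|>|n\bigcdot q/p|$ (for example $s=3/2$, $q/p=5/2$ gives $4>3$), not $|n\bigcdot q/p|>|s\bigcdot q/p|$; moreover the $L$-side intersection count is $2(pq-\tb(L))$, which can be arbitrarily large since $L$ may be highly stabilized. Consequently the boundary-parallel dividing curves you can hope to find on $A$ are bypasses \emph{for $L$}, and attaching those destabilizes $L$, destroying the Legendrian isotopy class you must preserve; bypasses for the inner torus, on the other hand, must be shown \emph{not to exist} at all (attaching one would produce, by Lemma~\ref{bypassattach}, a convex torus of slope $\lfloor s\rfloor +1$ in the region between the tori, so the solid torus would contain a non-minimally twisting thickened torus and hence an overtwisted disk), and ruling them out is where the hypothesis $q/p>\lceil\omega(K)\rceil$ and the comparison of $q/p$ with $\tbb(K)+1$ actually enter. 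In any case, attaching bypasses disjoint from $L$ modifies tori, not $L$, so ``transporting $L$ across $R$ onto $T$'' has no mechanism. The paper's proof goes the opposite way: it keeps $L$ fixed, shows every dividing arc of $A$ that starts on the inner torus runs across to $L$, and then enlarges the inner solid torus $S'$ by an $I$-invariant neighborhood of $A$; after edge-rounding, the boundary of $S''=S'\cup N(A)$ is convex with two dividing curves of integer slope $\lfloor s\rfloor$ and contains $L$ in its characteristic foliation---which is exactly why the resulting foliation need not be standard and $L$ need not be a ruling curve.
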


\begin{proof}
We first notice that $\tb(L)<pq$. If this were not the case then the twisting of $\xi$ with respect to a torus $T$ bounding a solid torus in the knot type of $K$ would be greater than or equal to 0, that is $\tw(L,T)\geq 0$. Thus after possibly stabilizing $L$ we can assume that it sits on $T$ with 0 contact twisting along $L$. Thus we can make $T$ convex without moving $L$. So $L$ sits on a convex torus $T$. Since $\tw(L,T)=-\frac 12 (\Gamma_T\bigcdot L)$ we see that the dividing curves of $T$ are parallel to $L$, that is the slope of $\Gamma_T$ is $q/p$. This contradicts the fact that $q/p$ was chosen larger than $ \lceil \omega(K) \rceil$ and so is larger than the width of $K$. Thus we know $\tb(L)<pq$ and hence $\tw(L,T)<0$ and we can always find a convex torus $T$ which contains $L$. 

Let $s$ be the slope of the dividing curves $\Gamma_T$ and $n$ the number of dividing curves. Let $S$ be the solid torus $T$ bounds. If $s\in \Z$ and $n=2$ then $S$ is a regular neighborhood of a Legendrian knot in $\mathcal{L}(K)$ and we are done. For the other cases, by Lemma~\ref{anyslopeless} we know that inside of $S$ there is another solid torus $S'$ that has convex boundary with 2 dividing curves of slope $m$ where $m = \lfloor s \rfloor$. We can take the ruling slope of $\partial S'$ to be $q/p$. Now consider an annulus $A$ from a ruling curve on $\partial S'$ to $L$. We claim that there are no bypasses for $\partial S'$ on $A$. To see this we consider two cases. If $q/p\geq \tbb(K)+1$ and there were such a bypass, we could attach it to $\partial S'$ to get a torus $T'$ in $S-S'$ with dividing slope $m+1$ (since there is an edge in the Farey tessellation from $m$ to $m+1$ and $m+1$ is the closest point to $q/p$ with this property, so Lemma~\ref{bypassattach} says that the bypass attachment will produce a torus of this slope). But $m\leq s<m+1$ and thus $S-S'$ is not minimally twisting. This implies that the contact structure on $S$ is overtwisted because we can realize the slope $\infty$ by a convex torus parallel to the boundary by Lemma~\ref{anyslope}, thus such a bypass does not exist. In the other case we have $ \omega(K)  <q/p< \tbb(K)+1$. Notice that in this case $\omega(K)=\tbb(K)$ since otherwise $q/p$ would need to be larger than $\lceil \omega(K) \rceil=\tbb(K)+1$. Now if $m<\tbb(K)$ then we would get the same contradiction as above. If $m=\tbb(K)$ then we must also have $s=m$ since $\omega(K)=\tbb(K)=m$. Now there are no bypasses in this case since they would increase the slope of the torus $\partial S'$ which is clearly not possible. 

We can now see that any dividing curve on $A$ that starts on a ruling curve of $\partial S'$ ends on $L$. However, there might be bypasses of $L$ on $A$. 
Now let $S''$ be $S'$ union a small $I$-invariant neighborhood of $A$. After edge-rounding, one may check that $\partial S''$ is convex with 2 dividing curves of slope $m$ and that $L$ is a part of its characteristic foliation.
\end{proof}

We notice that given any Legendrian $L$, it has a standard neighborhood $N$ with convex boundary having dividing slope $\tb(L)$. Moreover we can assume the slope of the ruling curves on $\partial N$ are any rational number not equal to $\tb(L)$. Thus we can take them to have slope $q/p$. Denote by $L_{(p,q)}$ one of these ruling curves. This will be called the \dfn{Legendrian $(p,q)$-cable of $L$}.
\begin{lemma}[Etnyre--Honda, 2004 \cite{EH04}]\label{tbandr}
If $L_{(p,q)}$ is the Legendrian $(p,q)$-cable of $L$, then 
\[
\tb(L_{(p,q)}) = pq- |p\, \tb(L)-q|
\]
and 
\[
\rot(L_{(p,q)}) = p\, \rot(L).
\]
\end{lemma}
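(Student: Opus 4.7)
The plan is to derive both formulas as direct applications of Lemmas~\ref{ctb} and~\ref{crot}, after identifying the rotation numbers of a meridian and a longitude on the boundary of a standard neighborhood of $L$. Let $N$ be a standard neighborhood of $L$, so that $\partial N$ is convex with two dividing curves of slope $s = \tb(L)$, and by definition $L_{(p,q)}$ is a ruling curve of slope $q/p$ on $\partial N$.

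For the Thurston--Bennequin invariant, Lemma~\ref{ctb} applies directly with $s=\tb(L)$, giving $\tb(L_{(p,q)}) = pq - |\tb(L)\bigcdot q/p|$. Since the integer $\tb(L)$ corresponds to the slope $\tb(L)/1$, we have $\tb(L)/1 \bigcdot q/p = p\,\tb(L)-q$, yielding the claimed formula.

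For the rotation number, Lemma~\ref{crot} gives $\rot(L_{(p,q)}) = p\,\rot(\lambda) + q\,\rot(\mu)$, where $\lambda$ and $\mu$ are realized as a longitudinal and a meridional ruling curve, respectively, on copies of $\partial N$ in an $I$-invariant neighborhood. It suffices to show (i) $\rot(\mu)=0$ and (ii) $\rot(\lambda)=\rot(L)$. For (i), the meridian $\mu$ bounds the meridional disk $D \subset N$, which is also a Seifert surface for $\mu$ viewed as an unknot in $S^3$, so the Seifert framing agrees with the disk framing and no correction is needed. Making $D$ convex, $\mu$ meets each of the two dividing curves of $\partial N$ exactly once, so $\Gamma_D$ consists of a single properly embedded arc; hence $D_+$ and $D_-$ are half-disks with $\chi(D_\pm)=1$, and the standard formula gives $\rot(\mu) = \chi(D_+) - \chi(D_-) = 0$. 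For (ii), $L$ is Legendrian isotopic through $N$ to a Legendrian divide on $\partial N$, which represents the homology class $\lambda + \tb(L)\,\mu$; applying Lemma~\ref{crot} to $L$ gives $\rot(L) = \rot(\lambda) + \tb(L)\,\rot(\mu) = \rot(\lambda)$ by (i). Substituting these values into Lemma~\ref{crot} applied to $L_{(p,q)}$ yields $\rot(L_{(p,q)}) = p\,\rot(L)$.

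The main subtlety is the framing bookkeeping in step (i); the crucial observation is that the meridian of a standard neighborhood is an unknot in $S^3$ whose bounding meridional disk is its Seifert surface, so the Euler characteristic formula for the rotation number applies without correction. Everything else is a routine linearity argument in the homology of $\partial N$.
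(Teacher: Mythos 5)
Your proposal is correct, and it is essentially the standard argument: the paper does not prove Lemma~\ref{tbandr} itself but quotes it from \cite{EH04}, where the computation is exactly this application of Lemmas~\ref{ctb} and~\ref{crot} together with $\rot(\mu)=0$ and $\rot(\lambda)=\rot(L)$ for a standard neighborhood. Two minor remarks: the fact that the core $L$ is Legendrian isotopic to a Legendrian divide on the boundary of its standard neighborhood is standard but should be cited or checked in the model neighborhood, and the key input $\rot(\lambda)=\rot(L)$ is also obtained directly in the paper's proof of Lemma~\ref{llcrot} by trivializing the contact planes along $L$, which gives an alternative to your step (ii).
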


Recall the cone of $L$ is 
\[
C(L)=\{S^k_+S^l_- (L) : \text{ for all } k \text{ and } l\}.
\]

We define the \dfn{$(p,q)$ cone of $L$} as 
\[
C_{(p,q)} (L)= C(L_{(p,q)}).
\]
\begin{remark}
Since stabilization is well-defined we see that two elements in $C(L)$ are isotopic if and only if they have the same rotation numbers and Thurston-Bennequin invariants. 
\end{remark}
\begin{lemma}\label{cones}
The $(p,q)$ cone of $L$ is the union of the $(p,q)$ diamonds for all $L'$ in the cone of $L$. That is
\[
C_{(p,q)}(L)= \bigcup_{L'\in C(L)} D_{(p,q)}(L').
\]
Moreover, if $L'$ and $L''$ are distinct in $C(L)$ then the diamonds $D_{(p,q)}(L')$ and $D_{(p,q)}(L'')$ are disjoint.
\end{lemma}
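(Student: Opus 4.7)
The core of the argument is the identity
\[
(S_\pm L)_{(p,q)} = S_\pm^p\bigl(L_{(p,q)}\bigr)
\]
in $C(L_{(p,q)})$, valid because in this section $q/p > \lceil \omega(K) \rceil \geq \tb(L)$. My plan is to first establish this identity, then use it to match arbitrary stabilizations of $L_{(p,q)}$ with stabilizations that first go via the underlying knot and then cable, and finally to verify disjointness from the classical invariants.

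To prove the identity, I would take standard neighborhoods $N_\pm \subset N$ of $S_\pm L$ and $L$ so that $N \setminus N_\pm$ is a basic slice of sign $\pm$ with boundary slopes $\tb(L)-1$ and $\tb(L)$. A $(p,q)$-ruling on $\partial N_\pm$ is by definition $(S_\pm L)_{(p,q)}$; since it sits on a convex torus inside $N$, it is also a Legendrian $(p,q)$-cable of $L$ lying in the cone $C(L_{(p,q)})$. By Lemma~\ref{tbandr} together with $q/p > \tb(L)$, its classical invariants are $\rot(L_{(p,q)}) \pm p$ and $\tb(L_{(p,q)}) - p$, matching those of $S_\pm^p(L_{(p,q)})$. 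Since isotopy in the cone $C(L_{(p,q)})$ is determined by classical invariants (by the remark preceding this lemma), the two knots agree. The main obstacle here is verifying that a $(p,q)$-ruling on $\partial N_\pm$ really belongs to $C(L_{(p,q)})$ rather than being some a priori unrelated Legendrian $(p,q)$-cable; this is handled by the standard annulus argument sketched in Subsection~\ref{bypasses}: choose a convex annulus between ruling curves on $\partial N_\pm$ and $\partial N$, analyze its dividing curves in the basic slice $N \setminus N_\pm$, and use them to produce a sequence of bypasses exhibiting a $(p,q)$-ruling on $\partial N_\pm$ as a stabilization of one on $\partial N$.

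Given this identity, the equality $C_{(p,q)}(L) = \bigcup_{L' \in C(L)} D_{(p,q)}(L')$ follows by division with remainder. Every element of $C_{(p,q)}(L)$ has the form $S_+^a S_-^b(L_{(p,q)})$; writing $a = kp + a_0$ and $b = lp + b_0$ with $0 \leq a_0, b_0 < p$, repeated application of the identity yields
\[
S_+^a S_-^b\bigl(L_{(p,q)}\bigr) = S_+^{a_0} S_-^{b_0}\bigl((S_+^k S_-^l L)_{(p,q)}\bigr) \in D_{(p,q)}(S_+^k S_-^l L),
\]
and the reverse inclusion is the same computation read backwards.

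For disjointness, fix distinct $L' = S_+^k S_-^l L$ and $L'' = S_+^{k'} S_-^{l'} L$ in $C(L)$. Using Lemma~\ref{tbandr}, elements of $D_{(p,q)}(L')$ have classical invariants
\[
\bigl(\rot(L_{(p,q)}) + p(k-l) + (a_0 - b_0),\ \tb(L_{(p,q)}) - p(k+l) - (a_0 + b_0)\bigr),\quad 0 \leq a_0, b_0 < p,
\]
and analogously for $D_{(p,q)}(L'')$ with parameters $a_0', b_0'$. If these coincide, adding and subtracting the two resulting equations yields $pk + a_0 = pk' + a_0'$ and $pl + b_0 = pl' + b_0'$; by uniqueness of division with remainder this forces $k = k'$, $l = l'$, $a_0 = a_0'$, and $b_0 = b_0'$. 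Since isotopy within $C(L_{(p,q)})$ is determined by classical invariants, the diamonds $D_{(p,q)}(L')$ and $D_{(p,q)}(L'')$ are therefore disjoint as sets of Legendrian isotopy classes.
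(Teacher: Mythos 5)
Your proposal is correct and follows essentially the same route as the paper: the paper also uses a convex annulus between $q/p$-ruling curves on nested standard neighborhoods (with no bypasses possible on the outer torus by tightness) to show that the cable of a stabilization destabilizes to the cable of the original knot, and then pins everything down with the invariant formulas of Lemma~\ref{tbandr} and the fact that isotopy within a cone is determined by the classical invariants. The only difference is organizational: the paper runs the annulus argument once for an arbitrary $L'\in C(L)$ and leaves the identification $S_+^kS_-^l(L_{(p,q)})\in D_{(p,q)}\bigl(S_+^{\lfloor k/p\rfloor}S_-^{\lfloor l/p\rfloor}(L)\bigr)$ as a routine check, whereas you isolate the single-stabilization identity $(S_\pm L)_{(p,q)}=S_\pm^p(L_{(p,q)})$ and iterate it by division with remainder.
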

\begin{proof}
Suppose that $L'$ is an element of $C(L)$, that is $L'$ is a stabilization of $L$. If $N$ is a standard neighborhood of $L$, then we can stabilize $L$ inside of $N$ to obtain $L'$ and then take a standard neighborhood $N'$ of $L'$ inside of $N$. We now have by definition that $L_{(p,q)}$ and $L'_{(p,q)}$ are ruling curves on the boundary of $N$ and $N'$ respectively. Let $A$ be a convex annulus connecting them in $N\setminus N'$. Notice that there are no bypasses for $\partial N$ on $A$ because if there were we could attach a bypass to $\partial N$ to get a convex torus $T$ in $N-N'$ with boundary slope $\infty$, thus contradicting the tightness of the contact structure on $N$. 
 
The imbalance principle shows that there will be bypasses for $L'_{(p,q)}$ along $A$.  We can use this to destabilize $L'_{(p,q)}$. If we keep destabilizing using bypasses on $A$ until there there are no more, then the destabilized $L'_{(p,q)}$ co-bounds a sub-annulus $A'$ of $A$ with $L_{(p,q)}$. Now the dividing curves on $A'$ all run from one boundary component to the other (since there are no boundary parallel dividing curves for either boundary component of $A'$). Thus we can foliate $A'$ by ``ruling curves". And these ruling curves give a Legendrian isotopy from our destabilized $L'_{(p,q)}$ and $L_{(p,q)}$. That is $L'_{(p,q)}$ is in $C_{(p,q)}(L)$. 

Since $D_{(p,q)}(L')$ is defined to be some stabilizations of $L'_{(p,q)}$ they will also be stabilizations of $L_{(p,q)}$. In other words $D_{(p,q)}(L')\subset C_{(p,q)}(L)$ and we have
\[
  \bigcup_{L' \in C(L)}D_{(p,q)}(L') \subset C_{(p,q)}(L).  
\] 
From the formula in Lemma~\ref{tbandr} it is clear that all the diamonds $D_{(p,q)}(L')$ are disjoint and it is not hard to check that $S^k_+S^l_- (L_{(p,q)}) \in D_{(p,q)}(S^{\lfloor k/p \rfloor}_+S^{\lfloor l/p \rfloor}_-(L))$. Thus we obtain
\[
  C_{(p,q)}(L) \subset \bigcup_{L' \in C(L)}D_{(p,q)}(L')
\]
to complete the proof. 
 \end{proof}

\begin{lemma}\label{mainlem}
If $q/p>\lceil \omega(K) \rceil$ is not an integer and $L\in \mathcal{L}(K_{(p,q)})$, then there is a unique $L'\in \mathcal{L}(K)$ with $L\in D_{(p,q)}(L')$ 
\end{lemma}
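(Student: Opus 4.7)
The plan is to combine Lemma~\ref{positive-onboundary} with a bypass analysis on an annulus, and then invoke Lemma~\ref{cones} to locate a unique $(p,q)$-diamond containing $L$.

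First I would apply Lemma~\ref{positive-onboundary} to place $L$ on the boundary of a standard neighborhood $N_0 = N(L_0)$ of some $L_0 \in \mathcal{L}(K)$. The characteristic foliation on $\partial N_0$ may fail to be standard, so a priori $L$ need not agree with $(L_0)_{(p,q)}$. To remedy this I would enlarge the neighborhood by destabilizing $L_0$ repeatedly, using that each destabilization $L_0'$ of $L_0$ satisfies $N(L_0) \subset N(L_0')$ and keeps $L$ inside. Since $\tb$ is bounded above by $\tbb(K)$, this process terminates at some $L' \in \mathcal{L}(K)$ that cannot be destabilized further, with $L$ now in the interior of $N(L')$.

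Next I would show that $L$ lies in the cone $C_{(p,q)}(L') = C((L')_{(p,q)})$. Choose a convex annulus $A$ in $N(L')$ with one boundary on $L$ and the other a ruling curve of slope $q/p$ on $\partial N(L')$ placed in standard form (which is Legendrian isotopic to $(L')_{(p,q)}$). Any bypass on $A$ attached along the $\partial N(L')$-boundary would, by Lemma~\ref{bypassattach} and the classification of tight contact structures on $T^2\times I$, allow one to thicken $N(L')$ to a standard neighborhood of a destabilization of $L'$, contradicting the maximality of $L'$. Consequently every bypass on $A$ is attached along $L$, and each one corresponds to a Legendrian destabilization of $L$. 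Iterating these bypass attachments destabilizes $L$ along $A$ until it is Legendrian isotopic to $(L')_{(p,q)}$, so $L \in C_{(p,q)}(L')$.

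Existence of the desired underlying knot now follows from Lemma~\ref{cones}: writing $C_{(p,q)}(L') = \bigcup_{\widetilde L \in C(L')} D_{(p,q)}(\widetilde L)$ produces $\widetilde L \in C(L')$ with $L \in D_{(p,q)}(\widetilde L)$. Uniqueness of $\widetilde L$ among all of $\mathcal{L}(K)$ is immediate from the disjointness of the diamonds $D_{(p,q)}(L'')$ for $L'' \in \mathcal{L}(K)$, which follows directly from the $\tb$ and $\rot$ formulas of Lemma~\ref{tbandr}. The main obstacle is the bypass analysis in the second paragraph: one must carefully rule out bypasses along the $\partial N(L')$-boundary of $A$ using the maximal destabilization together with the hypothesis $q/p > \lceil \omega(K) \rceil$, and verify that the iterated destabilization of $L$ genuinely terminates at the ruling curve $(L')_{(p,q)}$ rather than at some other Legendrian.
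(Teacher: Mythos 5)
Your existence argument is essentially a workable variant of the paper's (the paper simply destabilizes $L$ along the torus from Lemma~\ref{positive-onboundary} until it is a ruling curve, so that $L\in C_{(p,q)}(\overline L)$, and then applies Lemma~\ref{cones}; your detour through a maximally destabilized companion adds complications, e.g.\ when $\tb(L')=\tbb(K)=\omega(K)$ and $q/p\in(\tb(L'),\tb(L')+1)$ the excluded bypass would produce a non-integer dividing slope, so the contradiction is with the definition of $\omega(K)$ rather than with non-destabilizability of $L'$ --- a case split the paper's Lemma~\ref{positive-onboundary} handles explicitly). But the serious problem is your uniqueness step. You claim uniqueness of the underlying knot "is immediate from the disjointness of the diamonds, which follows directly from the $\tb$ and $\rot$ formulas of Lemma~\ref{tbandr}." Classical invariants cannot give this. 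The knot type $K$ is not assumed Legendrian simple, so there may be non-isotopic $L',L''\in\mathcal{L}(K)$ with the \emph{same} rotation number and Thurston--Bennequin invariant; their diamonds $D_{(p,q)}(L')$ and $D_{(p,q)}(L'')$ then consist of cables with identical classical invariants, and nothing in Lemma~\ref{tbandr} (or in Lemma~\ref{cones}, whose disjointness statement is only for diamonds inside a single cone $C(L)$) prevents some $S_+^kS_-^l(L'_{(p,q)})$ from being Legendrian isotopic to $S_+^kS_-^l(L''_{(p,q)})$. Indeed such identifications do occur once one stabilizes out of a diamond, so the issue is real, and ruling it out inside the diamond is exactly the content of the lemma.

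This uniqueness is the bulk of the paper's proof and requires genuinely more machinery: assuming $L$ lies on the boundaries of standard neighborhoods $N'$ of $L'$ and $N''$ of $L''$ with the same invariants, one first arranges the two tori to agree near $L$ (after trimming to two dividing curves), then uses Lemma~\ref{smooth-isotopy} to get a smooth isotopy rel $L$ between the complementary annuli, and Colin's isotopy discretization to realize it as a chain of bypass attachments. One then shows inductively that every intermediate solid torus has dividing slope in $[\tb(L'),\infty)$ --- the key point being that $L$, lying in the fixed diamond, is at most a $(2p-2)$-fold stabilization of $L'_{(p,q)}$, so intersection-number estimates as in Lemma~\ref{intersections} forbid the slope from dropping below $\tb(L')-1$, and a cone/diamond disjointness argument forbids slopes in $[\tb(L')-1,\tb(L'))$ --- and that each intermediate torus contains a standard neighborhood of a Legendrian that stabilizes to $L'$. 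This forces $L''$ to be Legendrian isotopic to $L'$. Without an argument of this kind your proof establishes only that the underlying knot is well defined up to classical invariants, which is strictly weaker than the lemma.
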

\begin{remark}
If $q/p$ is an integer, then $K_{(p,q)}$ is isotopic to $K$ so the excluded cases from this lemma are trivial. 
\end{remark}
\begin{remark}
Notice that this lemma says that the knot $L'$ is an invariant of $L$. We call this the \dfn{underlying knot} of $L$ and denote it by $\under(L)$. 
\end{remark}
\begin{proof}
From Lemma~\ref{positive-onboundary} we know that $L$ sits on the boundary of some solid torus $N$ that is a standard neighborhood of some $\overline{L}\in \mathcal{L}(K)$. If $L$ is not a ruling curve on $\partial N$ then $L$ does not intersect the dividing curves of $\partial N$ minimally and we can use the bypass that results to destabilize $L$ to another Legendrian on $\partial N$. We may continue to do this until $L$ is a ruling curve on $\partial N$. That is, $L$ destabilizes to $\overline{L}_{(p,q)}$ and in particular is in the $(p,q)$ cone $C_{(p,q)}(\overline{L})$. Now from Lemma~\ref{cones} we see that there is a unique $L'\in C(\overline{L})$ such that $L\in D_{(p,q)}(L')$. 

We are left to show that $L'$ is uniquely determined ($L'$ is unique among Legendrian knots in $C(\overline{L})$ but we need to see it is the unique Legendrian knot in $\mathcal{L}(K)$). Suppose that $L$ sits on the boundary of a standard neighborhood $N'$ of $L'$ and on the boundary of a standard neighborhood of $N''$ of $L''$ so that $L'$ and $L''$ both have the same rotation number and Thurston-Bennequin invariant (so that $L$ is in the $(p,q)$ diamond of both $L'$ and $L''$). We show that $L'$ and $L''$ are Legendrian isotopic, and thus the knot underlying $L$ is well-defined. 

Let $N$ be a small neighborhood of $L$. We claim that we can isotope $T'=\partial N'$ fixing $L$ to agree with $T''=\partial N''$ in $N$. To do so, take a small I-invariant neighborhood $B'$ of $T'$ and smoothly isotope $T'$ in this I-invariant neighborhood fixing $L$ to a torus $\widetilde{T'}$ that agrees with $T''$ in $N$. Now $\widetilde{T'}$ is a convex torus in $B'$ and so has the same dividing slope as $T'$ but might have more than two dividing curves. We will show that we can further isotope $\widetilde{T'}$ so that it has only two dividing curves and still agrees with $T''$ in $N$. 

If $\widetilde{T'}$ already has only two dividing curves then we are done. So we suppose it does not. We notice that we could arrange the characteristic foliation on $\partial B'$ so that it is standard with ruling curves of the same slope as $L$. Let $\widetilde{C}$ be a Legendrian curve in $\widetilde{T'}$ that is disjoint from $L$. Take an annulus $A$ in $B'$ between a ruling curve on one boundary component of $B'$ and $\widetilde{C}$ on $\widetilde{T'}$. Since the ruling curves on $\partial B'$ intersect the dividing curves on $\partial B'$ minimally we see that there is a bypass for $\widetilde{T'}$ along $A$ that is disjoint from $L$. This bypass will either reduce the number of dividing curves of $\widetilde{T'}$, increase the number, or do nothing (it cannot change the slope of the dividing curves since we are in an $I$-invariant neighborhood and we are assuming there are more than $2$ dividing curves). Notice that we can continue to attach bypasses to $\widetilde{T'}$ along $A$ until the number of dividing curves is reduced to $2$ (we note that the number will eventually have to reduce to $2$ since $A$ intersects the dividing curves on $\partial B'$ minimally and there are two dividing curves there of the same slope as on $\widetilde{T'}$). This will result in the desired torus, which we rename $T'$ for convenience. 

Now denote two annuli $T' \setminus N$ and $T'' \setminus N$ by $A'$ and $A''$ respectively. Lemma~\ref{smooth-isotopy} says that $A'$ is smoothly isotopic, rel boundary, to $A''$ in $S^3\setminus N$. 
Given the topological isotopy from $A'$ to $A''$ we can use Colin's isotopy discretization \cite[Lemma~3.10]{Honda02} to find a sequence of annuli $A_0=A', A_1, \ldots, A_k=A''$ such that $A_{i+1}$ is obtained from $A_i$ by attaching a bypass from either the ``inside" or ``outside". Note the $A_i$ together with $T'\cap N$ form a torus $T_i$ that bounds a solid torus $N_i$; the bypass is attached from the inside if the bypass is inside of $N_i$ and attached from the outside otherwise. 

We will inductively show that each $N_i$ contains a standard neighborhood of some Legendrian knot $L_i$ that is either $L'$ or stabilizes to $L'$ (in other words $L'$ destabilizes to $L_i$). Thus $N_k$ will be a standard neighborhood of $L'$ and hence $L'$ and $L''$ will be isotopic.

We first show that the slope of dividing curves of $N_i$ is always in $[\tb(L'), \infty)$ for any $i$. This is true for $N_1$, so inductively assume it is true for $N_i$. Suppose the slope of $N_{i+1}$ is in $[-\infty, \tb(L')).$ If the slope were less than $\tb(L')-1$ then we claim that $L$ would have to intersect the dividing curves of $T_{i+1}$ more times than it intersects the dividing curves of $T_i$, but since the bypass was attached in the complement of $L$ this is not possible.

To verify the claim notice that since $L$ is in the diamond $D_{(p,q)}(L')$ it is at most a $2p-2$ stabilization of $L'_{(p,q)}$. That is, the contact twisting of $L$ relative to $T_{i+1}$ is greater than or equal to 
\[
  -|\tb(L') \bigcdot q/p|-2p+2,  
\] 
and hence the intersection of $L$ with one of the dividing curves of $T_{i+1}$ is less than or equal to $|p\tb(L')-q|+2p-2$. However, we see that 
\[
  |(\tb(L')-2) \bigcdot q/p| = |p\tb(L') - q| + 2p,
\]
so the curve of slope $\tb(L') - 2$ intersects the slope $q/p$ too many times and hence by Lemma~\ref{intersections}, all slopes less than $n-1$ intersect $q/p$ too many times.

If the slope of $T_{i+1}$ is in $[\tb(L')-1, \tb(L'))$, then $N_{i+1}$ contains a standard neighborhood $\widehat{N}$ of some Legendrian knot $\widehat{L}$ that is a stabilization of $L'$. Take an annulus $A$ between $\partial N_{i+1}$ and $\partial \widehat{N}$ with one boundary component being $L$ and the other being a $q/p$ ruling curve on $\partial \widehat{N}$. There can be no bypasses for $\widehat{N}$ along $A$ as attaching one would create a torus of slope $\tb(L')$ contained in $N_{i+1}$, which is not possible. Thus we can destabilize $L$ until it is a ruling curve on $\widehat{N}$. Thus $L$ would have to be in the $(p,q)$ cone of $\widehat{L}$, but this cone is disjoint from $D_{(p,q)}(L')$, and thus the slope of $T_{i+1}$ cannot be in $[\tb(L')-1, \tb(L'))$. Hence the slope of $N_{i+1}$ cannot be in $[-\infty, \tb(L'))$ as claimed. 

Now each $N_i$ contains a standard neighborhood of a Legendrian knot $L_i$ in $\mathcal{L}(K)$ and has dividing curves with slope in $[\tb(L_i),\tb(L_i)+1)$ (possibly with more than 2 dividing curves). We inductively assume that $L_i$ is $L'$ or stabilizes to $L'$. If the bypass is attached from the outside, then there are three possible cases: it does nothing, it changes the number of dividing curves, or it increases the dividing slope. For the first two cases, $L_{i+1}$ is $L_i$. In the last case $L_{i+1}$ is $L_i$ or a destabilization of $L_i$, but in either case $L_{i+1}$ stabilizes to $L'$. If the bypass is attached from the inside then there are again three cases. The first two are the same as before and $L_{i+1}=L_i$. In the last case the slope of the dividing curves on $N_{i+1}$ decreases. If that slope is still in $[\tb(L_i),\tb(L_i)+1)$ then $L_{i+1}=L_i$. If not, then $L_{i+1}$ is a stabilization of $L_i$. If $L_{i+1}$ does not stabilize to $L'$ then the $(p,q)$ cone of $L_{i+1}$ would not contain the $(p,q)$ diamond $D_{(p,q)}(L')$ and thus $L$ could not sit on a convex torus neighborhood of $L_{i+1}$. Therefore, $L_{i+1}$ must stabilize to $L'$, thus completing our argument.
\end{proof}

\begin{proof}[Proof of Theorem~\ref{classification-positive}]
Lemmas~\ref{cones} and~\ref{mainlem} show that if $q/p>\lceil \omega(K) \rceil$ is not an integer, then two knots $L_0, L_1\in \mathcal{L}(K_{(p,q)})$ are Legendrian isotopic if and only if they have the same rotation numbers, Thurston-Bennequin invariants, and underlying knots. 
\end{proof}

\begin{proof}[Proof of Theorem~\ref{postransverse}]
Recall that by Theorem~\ref{traniso} the transverse push-offs of two Legendrian knots $L$ and $L'$ are transversely isotopic if and only if $L$ and $L'$ have a common negative stabilization \cite{EtnyreHonda01b}. Given this, the result clearly follows from the classification of Legendrian knots in positive cables, Theorem~\ref{classification-positive}.
\end{proof}

\section{Legendrian large cables}\label{llcsec}
The proof of Theorem~\ref{llc-model} can be thought of as a generalization of the proof of \cite[Theorem~1.2]{EH04}. Recall that Theorem~\ref{llc-model} says that any Legendrian large $(p,q)$-cable is contained in a balanced continued fraction block with center slope $q/p$ and any such continued fraction block gives a Legendrian large cable. 

\begin{proof}[Proof of Theorem~\ref{llc-model}]
We start with Item~(\ref{llc-model:existence}). Let $L \in \mathcal{L}(K_{(p,q)})$ be a Legendrian large cable with $\tb(L) = pq + m$ and $N$ a standard neighborhood of $L$. Take an essential annulus $A$ in $\overline{Y \setminus \partial N}$ so that $\partial N \setminus A$ is a disjoint union of two annuli $A_0, A_1$ and for $i=0,1$, $A \cup A_i$ is a torus that bounds a solid torus whose core is in the knot type $K$. We can assume $\partial N$ is convex with ruling curves of slope $pq$. We can now take $A$ to have Legendrian boundary being two of these ruling curves. Perturb $A$ to be convex and take a small I-invariant neighborhood $A \times [0,1]$ (that intersects $\partial N$ in annuli foliated by ruling curves). Then $N \cup A \times [0,1]$ is a thickened torus $T^2 \times [0,1]$. 

Now consider the dividing curves on $A$. If there is a boundary parallel dividing curve on $A$, then this gives a bypass for $N$ with slope $pq$. Since $\tb(L)$ is bigger than $pq$, attaching this bypass results in dividing curves with meridional slope so the contact structure on the solid torus bounded by this torus is overtwisted. This contradicts the fact that $L$ lives in a tight contact manifold. Thus the dividing curves on $A$ consist of $2m$ properly embedded non-separating arcs. Let $T_i = T^2 \times \{i\}$ for $i=0,1$. Now change the coordinates on $T_i$ so that the slope of the boundary of $A$ becomes $0$. The dividing curves on $A$ may have holonomy, that is a non-zero slope in this coordinate system, see Figure~\ref{holonomy}. 
    
\begin{figure}[htb]{\tiny
\begin{overpic}
{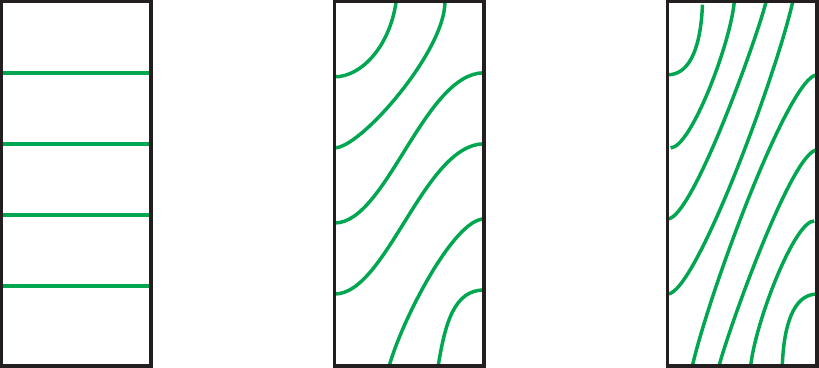}
\end{overpic}}
\caption{Possible holonomy on the annulus $A$. The annuli have holonomy $0$, $1/2$, and $1$, respectively.}
\label{holonomy}
\end{figure}

We can modify the holonomy by $\pm 1$ by changing the coordinate of $T^2$ again. Thus we can assume that there exists only $k/m$-holonomy for $0 \leq k < m$. Then after edge rounding, one may check that we obtain $T^2 \times [0,1]$ with dividing slope $s_0= m/(-1+k)$ on $T_0$ and $s_1=m/(1+k)$ on $T_1$. This follows since a $0$ sloped curve will intersect the dividing curves on $T_i$, $2m$ times and an $\infty$ sloped curve will intersect the dividing curves on $T_0$, $-2+2k$ and on $T_1$, $2+2k$ times. 

Below we will see that $k$ must be $0$. Assuming that for the moment, we complete the proof that a neighborhood of $K$ is contained in length $2m$ balanced continued fraction blocks of which the center slope is $q/p$ (recall in the coordinates chosen on $T$ above, $q/p$ is the slope $0$). In this case, we have that the contact structure on $T^2 \times [0,1]$ has boundary slopes $s_0=-m$ and $s_1=m$. 
Let $C$ be the core curve of $A$ and notice that the relative Euler class evaluated on $C\times [0,1]$ is 0 since the contact structure on $A\times [0,1]$ is $[0,1]$-invariant. We can factor $T \times [0,1]$ into $2m$ basic slices $B_i$ where $B_i$ has boundary slopes $-m+i$ and $-m+i+1$ for $i=0, \ldots, 2m-1$. The possible relative Euler classes evaluated on $C\times [0,1]$ can be $[-m,m] \cap \Z$ and they are the difference between the number of positive and negative basic slices. Thus the number of positive and negative basic slices are the same and we have that $T\times [0,1]$ is a balanced continued fraction block.

We will now show that $k$ must be $0$. Suppose not, then we must have $m>1$ and $0<k<m$. Stabilize $L$ to be $L'$ with $\tb(L') = pq+1$. Take a standard neighborhood of $L'$ inside of $N$ and denote it by $N'$. Also take a convex annulus $A'$ in $T \times [0,1]$ which is smoothly isotopic to $A$ and whose boundary consists of ruling curves on $\partial N'$. Take a small I-invariant neighborhood of $A'$ and by gluing this to $N'$ as before, we obtain a thickened torus $T^2 \times I$ contained in $T \times [0,1]$ for $I \subset [0,1]$. After edge rounding, we obtain balanced continued fraction blocks $B=T^2 \times I$ with boundary slopes $1/(-1+l)$ and $1/(1+l))$ according to the argument above. Notice that since we cannot change coordinates on $T^2$ as we did above, there is no restriction on $l$. Recall we can realize any slope in $[1/(-1+l), 1/(1+l)]$ by a convex torus parallel to the boundary of $B$. In particular, notice that we can always find a convex torus in $B$ with dividing slope $m/k$ if $l\not= 0,1$. Since $B$ is contained in $T \times [0,1]$ which has boundary slopes $m/(-1+k)$ and $m/(1+k)$ and the interval $[m/(-1+k), m/(1+k)]$ does not contain $m/k$, we see that $T\times [0,1]$ is not minimally twisting unless $l=0$ or $1$. If $T\times [0,1]$ is not minimally twisting then the ambient contact manifold is not tight, thus we must have $l=0$ or $1$. 
Notice that when $m>1$, the interval of slopes $[m/(-1+k), m/(1+k)]$ contains $[-\infty, 1]$ and we also see that $[1/(-1+l), 1/(1+l)]$ is contained in $[-\infty, 1]$. Thus there is $J\subset [0,1]$ that contains $I$ such that $T\times J$ has boundary slopes $-\infty$ and $1$ and contains $B$. But $T\times J$ is a basic slice, which cannot contain continued fraction blocks with mixed signs unless it is overtwisted. Thus we must have $k=0$. 

To prove the converse, we first observe that for every $m$ there is a length $2m$ balanced continued fraction block $B_m$ that decomposes as described above. For example, take the $(m+1,-1)$-cable $L$ of the unknot shown in Figure~\ref{lc} for $m=1$. 
    
\begin{figure}[htb]{\tiny
\begin{overpic}
{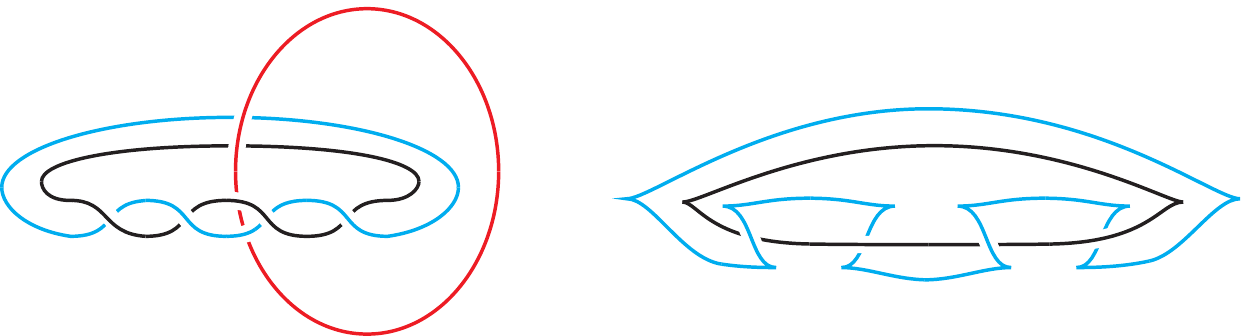}
\end{overpic}}
\caption{A $(2,-1)$-cable of the unknot. On the left we see the blue and red curves form a Hopf link and the black curve is a $(-1,2)$-cable of the blue curve but a $(2,-1)$-cable of the red curve. On the right a Legendrian realization of the black curve, that is the $(2,-1)$-cable of the red curve, with Thurston-Bennequin invariant $-1$ and it is sitting in the complement of realization of the blue unknot that has been stabilized twice positively and twice negatively. That is, we see that there is a balanced continued fractions block of length four in the neighborhood of the red unknot with central slope $-2$.}
\label{lc}
\end{figure}

It clearly has Thurston-Bennequin invariant $-1$ and so sitting on the cable torus its twisting relative to the torus is $m$. Now the argument above says we can take a neighborhood $N$ of $L$ and let $A$ be the part of the cable torus outside of $N$. We can make $A$ convex and as above will have dividing curves that run from one boundary to the other. So we can let $B_m$ be a neighborhood of $A$ together with $N$. The argument above says that this is a balanced continued fraction block with center slope $-1/(m+1)$ and using a diffeomorphism of $B_m$ we can arrange any center slope we like. 

Now assume that the knot $K$ has a neighborhood that contains a balanced continued fraction block $B$ with length $2m$ whose center slope is $q/p$. Observe that there is a contactomorphism from $B_m$ to $B$. But inside $B_m$ we have a cable with twisting $m$ larger than the torus framing. So its image $L$ in $B$ will be $(p,q)$-cable of $K$ with $\tb(L)=pq+m$. This completes the proof Item~(\ref{llc-model:existence}).

For Item~(\ref{llc-model:unique}), let $B$ be a length $2m$ balanced continued fraction block with the center slope $q/p$ containing $L$. Let $N$ be a standard neighborhood of $L$ and $A$ an essential annulus in $\overline{B \setminus N}$ whose boundary components are Legendrian ruling curves of $\partial N$. In the proof of Item~(\ref{llc-model:existence}), we showed that the union of $N$ and an $I$-invariant neighborhood of $A$ is a length $2m$ balanced continued fraction block. Since it is contained in $B$, they are clearly contact isotopic. Now suppose that there exist two length $2m$ balanced continued fraction blocks $B$ and $B'$ containing $L$. As discussed above, we can consider $B$ and $B'$ as a union of $N$ and an $I$-invariant neighborhood of essential annuli $A$ and $A'$, respectively. By Lemma~\ref{smooth-isotopy}, there is a smooth isotopy from $A$ to $A'$ fixing $N$. By isotopy discretization \cite[Lemma~3.10]{Honda02}, there is a sequence of annuli related by bypass attachments that go from $A$ to $A'$. These bypasses can have one of three effects on the dividing curves on the annulus. 

First, the bypass might produces a boundary-parallel dividing curve on the annulus, which gives a bypass on $N$ with slope $pq$. After attaching the bypass, $N$ thickens to a solid torus that contains a dividing curve with meridional slope, which implies overtwistedness. 

Second, the bypass might change the slope of the diving curves. This only happens when the number of dividing curves on $A$ is $2$ and hence the length of the continued fraction block is $2$. Then attaching the bypass increases or decreases the holonomy by $1$. However, we will see as in the proof of Item~(\ref{llc-model:existence}), changing the holonomy produces an overtwisted contact structure. Indeed, suppose a bypass is attached to $A$ from the front to obtain an annulus $A'$ and the holonomy went from $l$ to $l+1$ (here we have chosen coordinates on the torus as in the proof of Item~(\ref{llc-model:existence})). Then the $N$ union the neighborhood of $A$ gives a thickened torus $T\times I$ with boundary having dividing slopes $1/(-1+l)$ and $1/(1+l)$. Doing the same for $A'$ yields a thickened torus $T'\times I$ with boundary having dividing slopes $1/l$ and $1/(2+l)$. Since the bypass was attached from the front of $A$, the back face of $T\times I$ and the front face of $T'\times I$ are disjoint and cobound a thickened torus whose boundary has dividing curves of slope $1/(-1+l)$ and $1/(2+l)$ that contains $T'\times I$. In particular, it contains a torus of slope $1/l$.  But as $1/l$ is not in the interval $[1/(-1+l), 1/(2+l)]$ (recall this is the interval on the boundary of the Farey graph that begins at $1/(-1+l)$ and goes clockwise until it gets to $1/(2+l)$) and thus the region bounded by the back face of $T\times I$ and front face of $T'\times I$ is not minimally twisting, but is contained in a solid torus and thus the contact structure on this torus must be overtwisted. 

Thus we see the only possible bypasses that can be attached to $A$ are trivial ones and this implies that there is a contact isotopy taking $B$ to $B'$. This completes the proof of Item~(\ref{llc-model:unique}).
\end{proof}

We now turn to the proof of Lemma~\ref{llcrot} that shows how to compute the rotation number of a Legendrian large cable from a ruling curve on the boundary of the associated continued fraction block. 

\begin{proof}[Proof of Lemma~\ref{llcrot}]
It is well-known that if $N$ is a standard neighborhood of a Legendrian knot $L$ with longitudinal ruling curves (that is having any integer slope different from $\tb(L)$) then the rotation number of $L$ and a ruling curve (oriented in the same direction as $L$) are the same. To see this notice that one can trivialize the contact planes on $N$ by taking a tangent vector field to $L$ and extending it to all of $N$. Now the tangent vectors to the ruling curve and to $L$ will not rotate with respect to this trivialization, this implies they have the same rotation number. 

The continued fraction block $B$ associated to $L$ is obtained from $N$ by attaching an $I$-invariant neighborhood of an annulus with boundary $pq$-ruling curves on $\partial N$. So the ruling curves on $\partial N$ can also be taken to be ruling curves on $\partial B$. In the coordinates on $\partial B$ coming from the underlying knot that is being cabled, these ruling curves will have slope $q/p$. Thus the rotation number of such curves agrees with the rotation number of $L$.  
\end{proof}

Recall Theorem~\ref{llc-isotopy} says that a Legendrian large cable is uniquely determined by and determines the continued fraction block associated to it by Theorem~\ref{llc-model}.
\begin{proof}[Proof of Theorem~\ref{llc-isotopy}]
Suppose $L$ and $L'$ are Legendrian isotopic. Denote their standard neighborhoods by $N$ and $N'$, respectively and let $B$ and $B'$ be the balanced continued fraction blocks associated to $L$ and $L'$, respectively. Since $L$ and $L'$ are isotopic, there is an ambient contact isotopy $\phi_t$ of $(S^3, \xi_{std})$ taking $N$ to $N'$. Relabel $\phi_1(B)$ as $B$. Then $B$ and $B'$ agree on $N'$ and they are contact isotopic by Item~(\ref{llc-model:unique}) of Theorem~\ref{llc-model}. 
  
Conversely, suppose $B$ and $B'$ are contact isotopic. By contact isotopy extension theorem ({\em cf.\ }\cite{Geiges:book}), there is an ambient contact isotopy $\phi_t:(S^3,\xi_{std}) \rightarrow (S^3,\xi_{std})$ taking $B$ to $B'$. Thus, $S^3 \setminus B$ and $S^3 \setminus B'$ are contactomorphic. Since both $B \setminus N$ and $B' \setminus N'$ are I-invariant neighborhoods of convex annuli, $(S^3 \setminus B) \cup (B \setminus N)$ and $(S^3 \setminus B') \cup (B' \setminus N')$ are equivalent to gluing neighborhood of a convex annulus to $(S^3 \setminus B)$ and $(S^3 \setminus B')$, respectively. Thus, $S^3 \setminus N$ and $S^3 \setminus N'$ are contactomorphic and this implies that $L$ and $L'$ are Legendrian isotopic by Lemma~\ref{Legendrian-isotopy}.
\end{proof}

We now establish the relation between a Legendrian large cables and its stabilizations. 
\begin{proof}[Proof of Theorem~\ref{stabilizellc}]
Let $N$ be the standard neighborhood of $L$. Recall from the proof of Theorem~\ref{llc-model} the continued fraction block $B$ of length $2m$ is obtained from $N$ by adding an $I$-invariant neighborhood of an annulus $A$ with boundary $pq$ sloped ruling curves on $\partial N$. Now inside $N$ we can stabilize $L$ to get $S_\pm(L)$ with a neighborhood $N_\pm\subset N$. We can extend $A$ to an annulus $A_\pm$ with boundary on $\partial N_\pm$. We can now form a continued fraction block $B_\pm$ from $N_\pm$ by adding an invariant neighborhood of $A_\pm$ so that $B_\pm\subset B$. From the proof of Theorem~\ref{llc-model} we see that $B_\pm$ has length $2(m-1)$. Since it is centered about $q/p$ we see that $B\setminus B_\pm$ is the union of two basic slices of opposite sign as claimed. Since the $B_\pm$ are different and there are only two ways to remove outermost basic slices and still have a balanced continued fraction block, we see that one way corresponds to $B_+$ and the other to $B_-$. Which one corresponds to the positive stabilization and which to the negative one can easily be computed as follows. Let $T^2\times I$ be the basic slice in $B\setminus B_\pm$ that contains the front face of $B$. Let $A'$ be an annulus of slope $q/p$ in the basic slice with boundary a ruling curve on each boundary component of $T^2\times I$.
One may compute that the relative Euler class of the contact structure evaluated on $A'$ is $\pm 1$. This is also the difference between the rotation numbers of the ruling curves on the front and back face of $T^2\times I$. Since by Lemma~\ref{llcrot} we know this corresponds to the rotation number of $S_\pm(L)$, we are done.  

Now given $L$ contained in $B$, stabilize the knot $m$ times to get $L'$. We know $\tb(L')=pq$ so we may put it on a convex torus $T$ inside of $B$ as one of the Legendrian divides. As argued below in the proof of Lemma~\ref{negative-onboundary} we may assume that $T$ has only two dividing curves. Thus there is a contactomorphism of $B$ that takes $T$ to one of the $T_i$, and hence a contactomorphism of $S^3$ taking $T$ to $T_i$. So we see that up to contactomorphism $L'$ is a Legendrian divide on $T_i$, and by  Lemma~\ref{Legendrian-isotopy} there will be a Legendrian isotopy from $L'$ to this divide. 
\end{proof}

We now move to find an upper bound on the Thurston-Bennequin invariant of any $(p,q)$-cable. 
\begin{proof}[Proof of Theorem~\ref{tbbound}]
Suppose that $q/p\leq\lceil\omega(K)\rceil$ is not an integer.
By Theorem~\ref{llc-model} we know if there exists $L\in \mathcal{L}(K_{(p,q)})$ with $\tb(L)=pq+m$ then there is a balanced continued fraction block of length $2m$ with center slope $q/p$. That means that there is a solid torus with boundary slope $q/p$ that contains half of this continued fraction block. In particular it will have to be part of the tail of the path from $\lfloor q/p \rfloor$ to $q/p$ in the Farey graph. So $m$ must be less than or equal to the tail. 

When $q/p>\lceil\omega(K)\rceil$ is not an integer, we get the bound from Theorem~\ref{classification-positive}.
\end{proof}

We end this section by showing that the width of Yasui's example $K_m$ discussed above has contact width at least  $-\frac{1}{2\left\lfloor \frac{3-m}{4} \right\rfloor - 1}$.
\begin{proof}[Proof of Theorem~\ref{UTP-Yasui}]
  According to \cite[Proposition~4.2]{Yasui16}, $K_m$ contains a Legendrian large cable for $m \leq -5$, more precisely, $\tbb((K_m)_{(-n,1)}) = -1$ for $n \leq \left\lfloor \frac{3-m}{4} \right\rfloor$. This implies that there are length $2(\left\lfloor \frac{3-m}{4} \right\rfloor - 1)$ balanced continued fraction blocks with center slope $-\frac{1}{\left\lfloor \frac{3-m}{4} \right\rfloor}$. Thus the dividing slope of the neighborhood of $K_m$ containing the continued fraction blocks is $-\frac{1}{2\left\lfloor \frac{3-m}{4} \right\rfloor - 1}$. 
\end{proof}

\section{Negative cables}\label{negsec}
In this section we prove Theorems~\ref{classification-negative} and~\ref{classification-in-between} concerning the classification of $(p,q)$-cables of $K$ with $q/p<\omega(K)$ and $q/p\in[\omega(K), \tb(K)+1)$, respectively. We begin with a series of lemmas that establish most of the claims in those theorems. 

\begin{lemma}\label{negative-onboundary}
  Assume $K$ is a $q/p$-minimally thickenable knot. If a Legendrian knot $L \in \mathcal{L}(K_{(p,q)})$ is not a Legendrian large cable, then it can be placed on a convex torus $T$ with two dividing curves that bounds a solid torus whose core is in the knot type $K$. (As in Lemma~\ref{positive-onboundary}, the characteristic foliation on $T$ might not be standard)
\end{lemma}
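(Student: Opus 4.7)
Since $L$ is not a Legendrian large cable, we have $\tb(L)\leq pq$. My plan is to first place $L$ on some convex torus in the knot type $K$, and then to iteratively reduce the number of dividing curves down to two by bypass attachments that are disjoint from $L$.

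For the first step I would mimic the strategy of Lemma~\ref{positive-onboundary}. Since $L$ is a $(p,q)$-cable of $K$ there is a smooth cabling torus $T_{sm}$ containing $L$, and $\tw(L,T_{sm})=\tb(L)-pq\leq 0$. A standard convex perturbation argument then produces a convex torus $T_0$ smoothly isotopic to $T_{sm}$ that still contains $L$ and bounds a solid torus $N_0$ whose core is in the knot type $K$. Let $s$ denote the dividing slope of $T_0$ and $2n$ the number of its dividing curves. If $n=1$ we are done, so assume $n\geq 2$ and split into two cases according to whether $L$ is a ruling curve or a Legendrian divide on $T_0$.

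Case 1: $\tb(L)<pq$. Then $\tw(L,T_0)<0$, so $L$ is a ruling curve of slope $q/p$ on $T_0$ and $s\neq q/p$. By Lemma~\ref{anyslopeless} $N_0$ contains a convex torus $T_0'$ parallel to $T_0$ with two dividing curves of slope $s$. Put the ruling slope on both tori equal to $q/p$ and take a convex annulus $A$ running from a ruling curve on $T_0'$ to a ruling curve on $T_0$ chosen disjoint from $L$ (possible because the ruling foliation on $T_0$ has a one-parameter family of parallel Legendrian curves). The boundary of $A$ meets the dividing set $2|s\bigcdot q/p|$ times on $T_0'$ and $2n|s\bigcdot q/p|$ times on $T_0$, so the imbalance principle provides a bypass for $T_0$ along a ruling curve disjoint from $L$. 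By Lemma~\ref{bypassattach}, attaching this bypass decreases the number of dividing curves of $T_0$ by two while preserving the slope $s$, and does not disturb $L$. Iterating finishes this case.

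Case 2: $\tb(L)=pq$, so $L$ is a Legendrian divide on $T_0$ and $s=q/p$. By $q/p$-minimal thickenability, $N_0$ thickens to a solid torus $N_1$ whose boundary is convex with only two dividing curves of slope $q/p$. The intermediate region $N_1\setminus N_0$ is a tight $T^2\times I$ with dividing slope $q/p$ on each face but $2n$ dividing curves on the inner side and $2$ on the outer side, so Honda's classification expresses its contact structure as a chain of layers, each of which corresponds to attaching a bypass to its inner boundary that reduces the number of dividing curves by two. Such a bypass's Legendrian arc meets only a few of the $2n\geq 4$ Legendrian divides on $T_0$, and using the shuffling freedom in Honda's classification one may arrange the arc to avoid the specific divide $L$. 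Attaching such a bypass then produces a new convex torus containing $L$ with two fewer dividing curves of slope $q/p$, still bounding a solid torus of knot type $K$. Iterating completes the proof. The main obstacle is Case 2: unlike Case 1, where the bypass annulus $A$ is built explicitly and gives direct control over placement, here the bypasses are supplied abstractly by Honda's classification of the thickening, and one must use the flexibility in decomposing the thickened region into bypass layers to ensure each successive bypass can be chosen away from the particular Legendrian divide $L$.
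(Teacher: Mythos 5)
Your reduction to the two cases $\tb(L)<pq$ and $\tb(L)=pq$ matches the paper, and Case~1 is essentially the situation already handled in Lemma~\ref{positive-onboundary} (though note two glossed points there: the lemma explicitly warns that $L$ need not be a ruling curve, so producing a Legendrian curve on $T_0$ parallel to and disjoint from $L$ requires a relative Legendrian realization/Giroux flexibility argument keeping $L$ fixed; the paper's own construction sidesteps this by running the annulus from a ruling curve on the inner torus to $L$ itself and then taking the union of the inner solid torus with an $I$-invariant neighborhood of that annulus and rounding edges). The genuine gap is in Case~2, which is exactly the case where $q/p$-minimal thickenability is needed. There you assert that the bypass layers of $N_1\setminus N_0$ can be attached along arcs in $T_0$ avoiding the Legendrian divide $L$, ``using the shuffling freedom in Honda's classification.'' Shuffling freedom concerns reordering the signs of basic slices inside a continued fraction block; it says nothing about where the attaching arc of a bypass sits relative to a chosen Legendrian divide, so it cannot deliver the positional control you need. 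Moreover, the usual way to force a bypass into a prescribed location --- the imbalance principle applied to an annulus whose boundary on $T_0$ avoids $L$ --- is unavailable here: any essential closed curve on $T_0$ disjoint from $L$ has slope $q/p$, the dividing slope, and such curves cannot produce dividing-curve-reducing bypasses. So the step ``one may arrange the arc to avoid the specific divide $L$'' is precisely the unproved crux, not a technicality.

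The paper resolves this case by a different mechanism that avoids choosing bypasses at all. Inside $N_0$ one finds a solid torus $N'$ with convex boundary having two dividing curves of slope $q/p$, and $q/p$-minimal thickenability gives $N''\supset N_0$ with the same property; the region $N''\setminus N'$ is then an $I$-invariant neighborhood of a convex torus (non-invariant possibilities would force a meridional-slope torus inside the tight solid torus $N''$). One takes a convex annulus $A=\alpha\times[-1,1]$ in this collar, where $\alpha$ has slope meeting $q/p$ once and $\partial A$ meets the dividing sets of $\partial N'$ and $\partial N''$ exactly twice. Since $T_0$ has $2n>2$ dividing curves, the curve $T_0\cap A$ meets $\Gamma_A$ non-minimally, and one isotopes $T_0$, fixing $L$ (which meets $A$ in a single point), to remove the excess intersections; afterwards $T_0$ is convex with exactly two dividing curves and $L$ is a Legendrian divide on it. If you want to salvage your layering approach you would need an actual argument (for instance via bypass rotation techniques) that a dividing-curve-reducing bypass can be attached along an arc disjoint from $L$; as written, that existence claim is unsupported.
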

\begin{proof}
  If $L$ is not a Legendrian large cable, then $\tb(L) \leq pq$. Thus any torus $T$ on which $L$ sits can be made  convex without moving $L$ and as in Lemma~\ref{positive-onboundary} if $\tw(L, T) < 0$, then we can assume $T$ has two dividing curves. Suppose $\tw(L, T) = 0$ and $T$ has $2n$ dividing curves for $n > 1$. Let $N$ be a solid torus which is bounded by $T$. Then inside of $N$, there exist a solid torus $N'$ that has convex boundary $T'$ with $2$ dividing curves of the same dividing slope of $T$. Since $K$ is $q/p$-minimally thickenable, there exists another solid torus $N''$ containing $N$ that also has convex boundary $T''$ with $2$ dividing curves of the same dividing slope of $T$. Then the contact structure restricted on $N'' \setminus N' \cong T^2 \times [-1,1]$ is contactomorphic to an I-invariant neighborhood of $T'$. Let $\alpha$ be a simple closed Legendrian curve on $T$ with slope $s/r$ such that $|q/p \bigcdot s/r| = 1$. Take a smooth annulus $A = \alpha \times [-1,1]$. Perturb $\alpha \times \{i\}$ for $i = -1, 1$ so that they become Legendrian and intersect the dividing curves on $T'$ and $T''$ exactly twice on each boundary component and perturb the annulus to be convex. Since there are more than $2$ dividing curves on $T$, $\tw(\alpha, T) < -1$, which implies that $T$ does not intersect the dividing curves on $A$ minimally. We can perturb $T$ fixing $L$ so that it intersects dividing curves exactly twice. See Figure~\ref{decreasing-intersections} for example. After $C^\infty$ perturbation, $T$ becomes a convex torus with $2$ dividing curves and $L$ is a Legendrian divide on $T$.
\end{proof}

\begin{figure}[htb]{\tiny
\begin{overpic}
{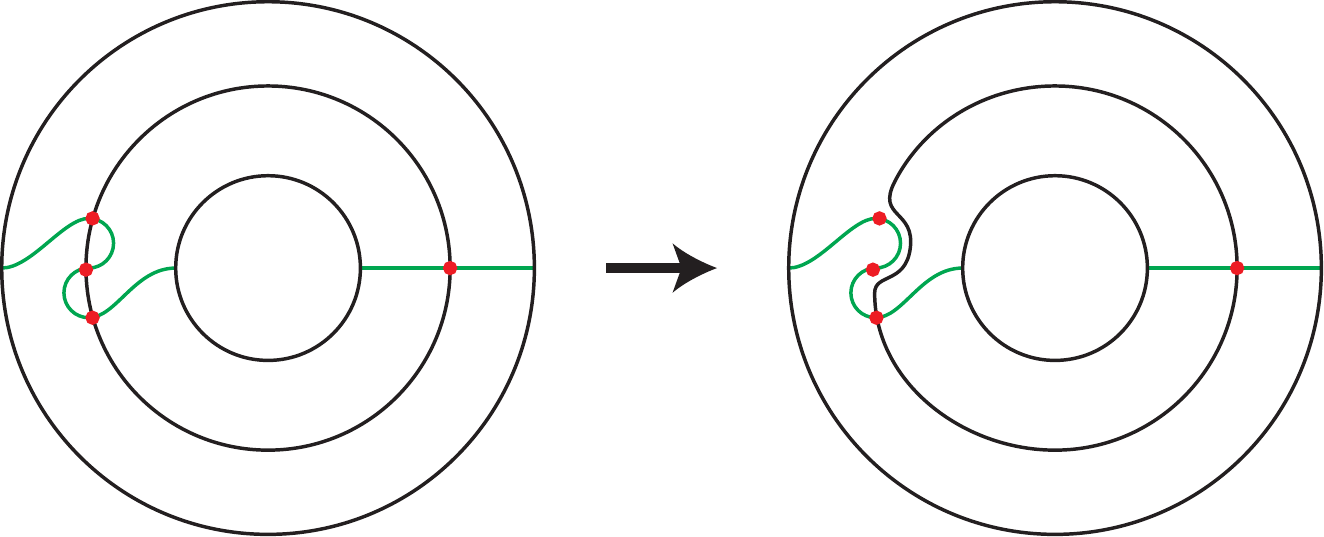}
\put(130, 141){$T'$}
\put(113, 122){$T$}
\put(98, 103){$T''$}
\put(360, 141){$T'$}
\put(340, 122){$T$}
\put(324, 103){$T''$}
\end{overpic}}
\caption{An annulus $A$ in $N'' \setminus N'$. The red dotted curves represent dividing curves.}
\label{decreasing-intersections}
\end{figure}

Recall from the start of Section~\ref{negcablesec} that we have slopes $a_i$ for $i\leq n+k$ and distinct (up to contact isotopy) solid torus $N_i^j$, for $j=1,\ldots, d_i$, in the knot type $K$ with convex boundary having $2$ dividing curves of  slopes $a_i$ that do not thicken to solid tori of slope $a_{i+1}$ for $i<n$. For $i=n$ we have $d_n$ distinct solid tori $N_n^j$ in the knot type $K$ with convex boundary having two dividing curves of slope $a_n$, and for $i>n$, $d_i$ distinct solid tori $N_i^j$ in the knot type $K$ that contain a balanced continued fraction block of length $2(i-n)$ with center slope of $q/p$ that do not thicken to solid tori containing a balanced continued fraction block of length $2(i+1-n)$ with center slope $q/p$. 

We also defined standard $(p,q)$-cables $L_i^j$ of $K$ to be a $q/p$ sloped ruling curve on $\partial N_i^j$ for $i<n$, a Legendrian divide on $\partial N_i^j$ for $i=n$, and the Legendrian large cable associated to the balanced continued fraction block in $N_i^j$ for $i>n$.

\begin{lemma}\label{negative-tbrot}
Let $T$ be a convex torus with two dividing curves of slope $s$ that bounds a solid torus $N$ whose core is in the knot type $K$. Suppose
$s\leq q/p$   
and $L \in \mathcal{L}(K_{(p,q)})$ is a Legendrian ruling curve on $T$ if $s<q/p$ or a Legendrian divide on $T$ if $s=q/p$. 
Factor $N$ into $N' \cup B_1 \cup ... \cup B_k$ where $N'$ is a standard neighborhood of $L' \in \mathcal{L}(K)$ with $\tb(L') = \lfloor s \rfloor$, and $B_i$ is a basic slice with dividing slopes $s_{i-1}$ and $s_i$ where $\{s_0, ..., s_k\}$ is the shortest path on Farey graph from $s_0 = \lfloor s \rfloor$ to $s_k = s$. Denote the sign of the basic slice $B_i$ by $\epsilon_i$. Then,
  \[
    \tb(L) = pq - |q/p \bigcdot s|
  \]
  and
  \[  
    \rot(L) = p \rot(L') - \sum_i{\epsilon_i} |(s_{i}\ominus s_{i-1}) \bigcdot q/p|.
  \]
\end{lemma}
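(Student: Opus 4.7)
My plan is to verify the two formulas separately. The Thurston-Bennequin formula is immediate from Lemma~\ref{ctb}: since $L$ is a $(p,q)$-curve (ruling or divide) on a convex torus of dividing slope $s$, the lemma gives $\tb(L) = pq - |s \bigcdot q/p| = pq - |q/p \bigcdot s|$. For the rotation formula, I plan to induct on $i$ through the decomposition, setting $N_i = N' \cup B_1 \cup \cdots \cup B_i$ and letting $L_i$ be a Legendrian representative of the $(p,q)$-curve on $\partial N_i$ (a ruling when $s_i < q/p$, a divide when $s_i = q/p$, possible only at $i=k$); the goal is to show $\rot(L_i) = p\rot(L') - \sum_{j \leq i}\epsilon_j |(s_j \ominus s_{j-1}) \bigcdot q/p|$.

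For the base case $i = 0$, $\partial N'$ has integer dividing slope $s_0 = \tb(L')$. In an $I$-invariant collar of $\partial N'$ I would realize a meridian $\mu$ and a Seifert longitude $\lambda$ as Legendrian curves (as ruling curves, or $\lambda$ as a divide if $s_0 = 0$). A standard convex surface computation gives $\tb(\mu) = -1$, which is the maximum for the unknot, so $\rot(\mu) = 0$. The Legendrian divide on $\partial N'$ is the contact push-off of $L'$ and hence has rotation $\rot(L')$; applying Lemma~\ref{crot} to this $(1, s_0)$ divide then forces $\rot(\lambda) = \rot(L')$. Lemma~\ref{crot} applied to $L_0$ itself now gives $\rot(L_0) = p\rot(\lambda) + q\rot(\mu) = p\rot(L')$, which is the formula at $i=0$.

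For the inductive step, I would take a convex annulus $A$ of slope $q/p$ in $B_{i+1}$ with $\partial A = L_{i+1} \cup (-L_i)$. Since a Seifert surface for $L_{i+1}$ is obtained from a Seifert surface for $L_i$ by attaching $A$,
\[
\rot(L_{i+1}) - \rot(L_i) = \langle e(\xi|_{B_{i+1}}, v), A \rangle,
\]
where $v$ is the trivialization of $\xi$ along $\partial A$ by positive tangent vectors. From Section~\ref{cfb} the relative Euler class of the basic slice $B_{i+1}$ is Poincar\'e dual to $\epsilon_{i+1}(s_{i+1} \ominus s_i)$, so this pairing evaluates to $\pm\epsilon_{i+1}|(s_{i+1} \ominus s_i) \bigcdot q/p|$. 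Summing across all $k$ basic slices will produce the claimed formula, and the edge cases ($k = 0$, or $L$ a divide at the final step) reduce at once to the base case and Lemma~\ref{crot}.

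The main obstacle will be pinning down the sign in the Euler-class pairing of the inductive step. I plan to verify it is $-1$ by an explicit sanity check: when $B_{i+1}$ is a single positive basic slice thickening $N_i$ to $N_{i+1}$, the core of $N_{i+1}$ is a positive destabilization of the core of $N_i$, and combining this with the base case and Lemma~\ref{crot} pins the rotation shift to $-\epsilon_{i+1}|(s_{i+1} \ominus s_i) \bigcdot q/p|$. This is consistent with the Euler-class calculation already carried out in the proof of Theorem~\ref{stabilizellc}.
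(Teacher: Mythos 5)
Your proposal is correct and follows essentially the same route as the paper's proof: the Thurston--Bennequin formula from Lemma~\ref{ctb}, the base case $\rot(L_0)=p\rot(L')$ from Lemma~\ref{crot}, and the rotation increments obtained by evaluating the relative Euler class of each basic slice on a convex $q/p$-annulus joining the $q/p$-curves on its two faces. The only minor difference is how the sign is pinned down: the paper computes directly that $(s_i\ominus s_{i-1})\bigcdot q/p>0$ using Lemma~\ref{signs} and then invokes the orientation convention, whereas you calibrate the overall sign against the integer-slope stabilization picture, which is equally valid (though to replace $(s_{i+1}\ominus s_i)\bigcdot q/p$ by its absolute value for every slice you still implicitly need the positivity check the paper records).
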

Recall the notation $a/b\ominus c/d = (a-c)/(b-d)$. 
\begin{proof}
By Lemma~\ref{ctb} we have $\tb(L) = pq + tw(L,T) = pq - |q/p \bigcdot s|$.
  
We can assume that the characteristic foliation of $\partial B_i$ is in standard form for $0 \leq i \leq k$. Assume that the ruling curves have slope $q/p$ if that is not the dividing slope. Let $T_{i-1}$ and $T_i$ be boundary tori of $B_i$ with dividing slopes $s_{i-1}$ and $s_i$ respectively. If $v_i$ and $v_{i-1}$ are vectors in the integer lattice $\Z^2\cong H_1(B_i)\cong H^2(B_i, \partial B_i)$, then the Poincar\'e dual of the relative Euler class of $B_i$ is given by a curve in the class $\epsilon_i(v_i-v_{i-1})$, \cite[Proposition~4.22]{Honda00}. Now choose a leaf of slope $q/p$ in each boundary component and denote them by $L_{i-1}$ and $L_i$ respectively. Take a properly embedded annulus $A_i$ in $B_i$ where $\partial A_i = L_{i-1} \cup L_i$. Evaluating the relative Euler class on $A_i$ is given by computing the intersection of the curve representing $\epsilon_i(v_i-v_{i-1})$ and the curve of slope $q/p$. This is given by $\epsilon_i (s_{i} \ominus s_{i-1})\bigcdot q/p$. By Lemma~\ref{signs}, we have $s_i \bigcdot q/p \leq 0$ for $i \leq k$. It is also not hard to see that $|s_{i} \bigcdot q/p| < |s_{i-1} \bigcdot q/p|$. Thus $(s_{i} \ominus s_{i-1})\bigcdot q/p$ is always positive so $\epsilon_i (s_{i} \ominus s_{i-1})\bigcdot q/p = \epsilon_i |(s_{i} \ominus s_{i-1})\bigcdot q/p|$.

By Lemma~\ref{crot}, $\rot(L_0)$ is $p \rot(L')$. Since the difference between rotation numbers of $L_{i-1}$ and $L_i$ is the relative Euler class evaluated on $A_i$, $\rot(L_i) = \rot(L_{i-1}) - \epsilon_i |(s_{i} \ominus s_{i-1})\bigcdot q/p|$. Thus after summation, we obtain the desired formula.
\end{proof}

\begin{lemma}\label{destabtostd}
Suppose $K$ is a $q/p$-minimally thickenable knot. Then any Legendrian $L\in \mathcal{L}(K_{(p,q)})$ either destabilizes or is one of the standard $(p,q)$-cables of $K$. 
\end{lemma}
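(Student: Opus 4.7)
The proof splits naturally into two cases depending on whether $L$ is a Legendrian large cable.

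\textbf{Case I:} $\tb(L)=pq+m$ with $m>0$. By Item~(\ref{llc-model:unique}) of Theorem~\ref{llc-model}, $L$ is contained in a unique (up to contact isotopy) length $2m$ balanced continued fraction block $B$ with center slope $q/p$. Extending $B$ on the side toward $K$ yields a solid torus $N$ in the knot type $K$ whose convex boundary has two dividing curves of slope $a_{n+m}$. The plan is then to check whether $B$ extends inside a larger solid torus to a length $2(m+1)$ balanced block with center slope $q/p$. If it does, Theorem~\ref{stabilizellc} exhibits $L$ as a $\pm$-stabilization of the Legendrian large cable associated to the longer block, so $L$ destabilizes. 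If it does not, then $N$ matches the definition of some $N_{n+m}^j$, and Theorem~\ref{llc-isotopy} identifies $L$ with $L_{n+m}^j$.

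\textbf{Case II:} $\tb(L)\leq pq$. Here I would invoke Lemma~\ref{negative-onboundary} to place $L$ on a convex torus $T=\partial N$ with exactly two dividing curves of some slope $s$, where $N$ is a solid torus with core in the knot type $K$. After standardizing the characteristic foliation on $T$, $L$ is a Legendrian divide if $s=q/p$ and a $q/p$-ruling curve otherwise. If $s=q/p$, then $N$ meets the definition of some $N_n^j$ and $L=L_n^j$. If $s=a_i$ for some $i<n$, I would test whether $N$ thickens to a torus with dividing slope $a_{i+1}$: if not, $N\cong N_i^j$ and $L=L_i^j$; if so, the thickening gives a basic slice containing a convex annulus from $L$ to a $q/p$-ruling curve on the outer torus, and Lemma~\ref{intersections} forces an excess of boundary-parallel dividing arcs on the $L$-side, yielding (after Giroux flexibility) a bypass for $L$ that destabilizes it.

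The most involved subcase is when $s$ is not a vertex of the Farey path $a_0,\dots,a_n$. If $s\in(a_{i-1},a_i)$ with $i\leq n$, I would use Lemma~\ref{anyslopeless} to realize a convex torus $T'\subset N$ of slope $a_{i-1}$; Remark~\ref{genintersect} gives $|s\bigcdot q/p|>|a_{i-1}\bigcdot q/p|$, so a convex annulus from $L$ to a $q/p$-ruling curve on $T'$ carries an imbalance of boundary intersections on the $L$-side and produces a boundary-parallel bypass for $L$. If $s>q/p$, Lemma~\ref{anyslopeless} again provides a convex torus $T'\subset N$ of slope $q/p$, and this is where the $q/p$-minimally thickenable hypothesis plays its key role: it ensures $T'$ can be arranged with only two dividing curves, so that the analogous annulus argument with a Legendrian divide on $T'$ still yields a bypass destabilizing $L$.

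The main obstacle will be to verify rigorously that the boundary-parallel dividing arcs produced by the imbalance argument genuinely cobound half-disks with arcs of $L$ and, after edge-rounding and Giroux flexibility, give bypasses attached to the correct side so as to reduce $\tb(L)$ by one. This requires careful bookkeeping of the dividing set on the interpolating annulus, in the spirit of the annulus-and-bypass analysis in the proof of Lemma~\ref{mainlem}. The $q/p$-minimally thickenable hypothesis will be essential precisely to avoid pathological configurations of parallel dividing curves of slope $q/p$ that would otherwise obstruct the final destabilization step.
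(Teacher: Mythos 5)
Your Case~I and the broad outline of Case~II do follow the paper's strategy, but the destabilization step in Case~II has a genuine gap exactly at the slopes closest to $q/p$, which is where all the real work in the paper's proof happens. First, the strict inequality you attribute to Remark~\ref{genintersect} is not what it says: the remark only gives $|s\bigcdot q/p|\geq|a_{i-1}\bigcdot q/p|$, and for $i=n$ equality genuinely occurs. For example, with $q/p=-12/5$ and $a_{n-1}=-5/2$, the slope $s=-17/7\in(a_{n-1},q/p)$ satisfies $|s\bigcdot q/p|=|a_{n-1}\bigcdot q/p|=1$, so the annulus from $L$ to a $q/p$-ruling curve on the inner $a_{n-1}$-torus carries no imbalance and no bypass is produced. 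Second, even when there is an imbalance, if the other boundary component is a Legendrian divide (disjoint from the dividing set) and $\tb(L)=pq-1$, the convex annulus has a single dividing arc with both endpoints on $L$, and the Legendrian Realization Principle cannot be applied to push $L$ across it (the resulting curve would be isolating); the paper flags exactly this obstruction. This situation arises in your ``$s=a_i$ thickens to $a_{i+1}$'' case whenever $i=n-1$ (since $a_{n-1}$ and $q/p$ share a Farey edge, $|a_{n-1}\bigcdot q/p|=1$ always) and in your ``$s>q/p$'' case whenever $|s\bigcdot q/p|=1$. The paper gets around it by introducing an auxiliary annulus $A'$ whose boundary on the slope-$q/p$ torus is a $q/p$-curve meeting the dividing set twice; then either the dividing curves of $A'$ run across, so $L$ can be isotoped onto $\partial N_n^j$ and destabilized to $L_n^j$, or there is a usable bypass destabilizing $L$ to $\tb=pq$.

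Relatedly, your placement of the $q/p$-minimal-thickenability hypothesis is off. Finding an inner convex torus of slope $q/p$ (or of any slope clockwise of $s$) with only two dividing curves is automatic from Lemma~\ref{anyslopeless} and the classification of tight structures on solid tori; no hypothesis is needed for that. Beyond its use in Lemma~\ref{negative-onboundary} (which you already invoke), the hypothesis enters precisely in the corner case above: after destabilizing $L$ to $\tb=pq$ via $A'$, the solid torus obtained by gluing an $I$-invariant neighborhood of $A'$ to $N_n^j$ has boundary slope $q/p$ but more than two dividing curves, and $q/p$-minimal thickenability is what allows one to thicken it to a torus with two dividing curves of slope $q/p$, contact isotopic to $\partial N_n^j$, so that the destabilized knot is identified with $L_n^j$. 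Without this step your argument cannot conclude that Legendrian knots sitting on tori of slope adjacent to $q/p$ either destabilize or are standard, so the proposal as written does not prove the lemma.
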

\begin{proof}
Given $L\in \mathcal{L}(K_{(p,q)})$, either $L$ is a Legendrian large cable, in which case it destabilizes to one of the $L_i^j$ for $i>n$ by Theorem~\ref{stabilizellc}, or by Lemma~\ref{negative-onboundary} we can find a solid torus $S$ with convex boundary having $2$ dividing curves such that $L$ is contained on the boundary of $S$. If $L$ does not intersect the dividing curves of $\partial S$ minimally, then there is a bigon cobounded by a segment of $L$ and the dividing set. From this we obtain a bypass for $L$ and we can destabilize $L$. Thus we can assume $L$ intersects the dividing curves minimally and we can arrange the foliation on $\partial S$ to be standard so that $L$ is a ruling curve or a Legendrian divide. In the latter case $L$ will be $L_n^j$ for some $j$. In the former case, assume the slope of the dividing curves on $\partial S$ is $s$. 

If $S$ is contained in a larger torus with slope $a_i$ for some $i < n$, then let $S'$ be such a torus with two dividing curves of slope $a_i$ where $a_i$ is the smallest $a_i$ larger than $s$. We can take a convex annulus with one boundary a ruling curve on $S'$ and the other $L$ on $S$. Clearly there cannot be bypasses on $A$ for $S'$ since attaching such a bypass would give a torus in $S'\setminus S$ of slope $a_{i-1}$ which would imply that $S'\setminus S$ is not minimally twisting. Thus we can use $A$ to destabilize $L$ to a ruling curve on $S'$. If $S'$ does not thicken to a solid torus of slope $a_{i+1}$ then it is one of the $N_i^j$ and we see that $L$ destabilized to $L_i^j$. If $S'$ does thicken to a solid torus $S''$ of slope $a_{i+1}$, then again take an annulus with boundary $q/p$ sloped ruling curves on $\partial S'$ and $\partial S''$, assume that this ruling curve is not $L$ on $\partial S'$. Now $A$ intersects the dividing set on $\partial S'$ more than the dividing set on $\partial S''$. Thus there is a bypass for $\partial S'$ that is disjoint from $L$. Attaching this bypass will produce a torus with dividing slope $a_{i+1}$ which will have to be contact isotopic to $\partial S''$. Thus we can take $L$ to be on $\partial S''$ and after further destabilizations it will be a ruling curve. We can iterate this process until we see $L$ destabilizing to $L_i^j$ for some $i<n$ and $j$, or $S$ is contained in a solid torus of slope $a_n$ and so we are left to consider the case when $i=n$. 
Now if $i=n$, then $S'$ is $N_n^j$ for some $j$ and one boundary of $A$ is a Legendrian divide of $S'$. We can again destabilize $L$ until it has $\tb(L)=pq-1$. Now the annulus will have only a single dividing curve with both ends on $L$. We cannot use the Realization Principle \cite[Section~3.3.1]{Honda00} to find another bypass for $L$. So now take a new annulus $A'$ that has one boundary of slope $q/p$ on $S'$ but intersecting the dividing curves twice, and the other boundary being $L$. Either the dividing curves run across $A'$ or there is a bypass for $L$ on $A'$. In the latter case we can use the bypass to destabilize $L$ to have $\tb(L)=pq$. Since  the destabilized $L$ is contained in $N_n^j$, it is $L_n^j$. (Recall, there is a unique Legendrian knot in $N_n^j$ with Thurston-Bennequin invariant $pq$.) Now if the dividing curves of $A'$ run across $A'$ then we can isotope $L$ to the Legendrian on $\partial N_n^j$ which can clearly destabilize to $L_n^j$.

We now assume that $S$ is not contained in any $N_i^j$ for $i\leq n$. In this case take the largest $a_i$ with $a_i<s$ and $i\leq n$. We know inside of $S$ there will be a convex torus with slope $a_i$. Let $N$ be a solid torus bounded by this torus. Now as above take an annulus $A$ between $\partial S$ and $\partial N$ with one boundary component being $L$ and the other being a $q/p$ ruling curve on $\partial N$ if $i<n$ and a Legendrian divide on $\partial N$ if $i=n$. If $i<n$ there can be no bypasses for $N$ along $A$ as attaching one would create a torus of slope $a_{i+1}$ which is contained in $S$, and this is not possible by the definition of $N$. Thus we can destabilize $L$ until it is a ruling curve on $N$. If $i=n$, notice that $N = N_n^j$ for some $j$. Now we can again destabilize $L$ until it has $\tb(L)=pq-1$. Since the annulus will have only a single dividing curve with both ends on $L$ as above, we cannot find another bypass for $L$ on $A$. So now take a new annulus $A'$ that has one boundary of slope $q/p$ on $\partial N_n^j$ but intersecting the dividing curves twice, and the other boundary being $L$. Either the dividing curves run across $A'$ and we can isotope $L$ to the Legendrian on $\partial N_n^j$ which can clearly destabilize to $L_n^j$, or there is a bypass for $L$ on $A'$ that we can use to destabilize $L$ to have $\tb(L)=pq$. Take an $I$-invariant neighborhood of $A'$ and glue this to $N_n^j$. Then we have a thickened solid torus with slope $a_n$ and more than two dividing curves. Now the destabilized $L$ is contained in this torus and since $K$ is $q/p$-minimally thickenable, we can further thicken the torus to have two dividing curves of slope $a_n$, which is contact isotopic to $\partial N_n^j$. Since $L$ is contained in $N_n^j$, it is $L_n^{j}$.
\end{proof}

\begin{remark}\label{remarktori}
  Before our next result we make an observation about tori in solid tori. Let $\xi$ be a tight contact structure on a solid torus $S$ with convex boundary of dividing slope $a_m$ and two dividing curves. Now let $T$ be a convex torus in $S$ that is parallel to the boundary and has dividing slope $a_l$ for some $l<m$ and two dividing curves. If $l\geq 0$ and $a_l$ is not on the interior of a continued fraction block with mixed signs, then there is a unique such torus in $S$. If $l\geq 0$ and $a_l$ is the $j^\text{th}$ term, for $j>1$, of a continued fraction block with $k_+>0$ positive basic slices and $k_->0$ negative basic slices, then there are $\min\{k_+,k_-, j-1\}+1$ different ways that $T$ can be realized. These come from shuffling the signs of the basic slices in the continued fraction block. The different tori have different numbers of positive basic slices coming before $T$. If $m\leq0$ then there are $l-m+1$ different tori $T$ coming from the boundary of stabilized knots. More precisely, a solid torus with convex boundary of slope $a_0=\lfloor q/p\rfloor$ corresponds to a Legendrian knot, and all the tori with integral slopes less than $a_0$ are the boundaries of neighborhoods of stabilizations of this knot. 
\end{remark}  

\begin{lemma}\label{negative-nondestab}
The standard cables $L_i^j$'s are not destabilizable for $i\not=n$. For $i=n$, $L_n^j$ is destabilizable if and only if $\partial N_n^j$ is contact isotopic to a torus in an $N_i^{j'}$ for some $i>n$.  
\end{lemma}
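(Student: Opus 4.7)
My plan is to split the proof into three cases according to the value of $i$, handling each with a combination of the existing theorems on Legendrian large cables and convex surface theory applied to the cabling tori.

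For the case $i>n$, the argument is direct. I would assume for contradiction that $L_i^j$ destabilizes to some $L'$. Then $\tb(L') = pq+(i-n)+1 > pq$, so $L'$ is a Legendrian large cable by Theorem~\ref{llc-model} and corresponds to a unique (up to contact isotopy) balanced continued fraction block $B'$ of length $2(i-n+1)$ with center slope $q/p$. By Theorem~\ref{stabilizellc}, the block associated to $S_\pm(L') = L_i^j$ is obtained from $B'$ by peeling off its two outermost basic slices and is contained in $B'$; by Theorem~\ref{llc-isotopy} it is contact isotopic to the block in $N_i^j$. Hence the length $2(i-n)$ block in $N_i^j$ thickens to the length $2(i+1-n)$ block $B'$, contradicting the definition of $N_i^j$.

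The case $i<n$ is the most subtle, and I expect this to be the main obstacle. Assuming $L_i^j$ destabilizes to $L'$, the bound $\tb(L')=pq-|q/p\bigcdot a_i|+1\leq pq$ rules out $L'$ being a Legendrian large cable, so Lemma~\ref{negative-onboundary} places $L'$ on a convex torus $T'$ with two dividing curves of some slope $s'$. Using Lemma~\ref{ctb} together with Remark~\ref{genintersect}, the condition $|q/p\bigcdot s'|<|q/p\bigcdot a_i|$ forces $s'\in[a_{i+1},q/p]$ along the shortest Farey path from $\lfloor q/p\rfloor$ to $q/p$. The key step I would then carry out is to show that the solid torus $S'$ bounded by $T'$ contains $N_i^j$ up to contact isotopy. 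This requires tracking $\partial N_i^j$ through the Legendrian isotopy carrying $L_i^j\subset\partial N_i^j$ to $S_\pm(L')\subset S'$, and using the standard nesting of solid tori of different dividing slopes around Legendrians in the knot type $K$ to conclude that the smaller-slope torus $\partial N_i^j$ sits inside $T'$. Once $N_i^j\subset S'$ is in hand, the region between $\partial N_i^j$ and $T'$ is a minimally twisting thickened torus by tightness, and Lemma~\ref{realizeslopes} yields a convex torus of slope $a_{i+1}$ in this region, contradicting the defining property of $N_i^j$.

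For the biconditional when $i=n$, both directions rely on Theorem~\ref{stabilizellc}. For the forward direction, if $L_n^j$ destabilizes to $L'$ then $\tb(L')=pq+1$, so $L'$ is a Legendrian large cable in a length $2$ balanced continued fraction block $B$ with center slope $q/p$; Theorem~\ref{stabilizellc} then identifies $L_n^j=S_\pm(L')$ as a Legendrian divide on one of the two slope $q/p$ convex tori in $B$, and Theorem~\ref{Legendrian-isotopy} together with contact isotopy extension gives a contact isotopy from $\partial N_n^j$ to this torus inside $B$, which itself sits in some $N_{n+1}^{j'}$. Conversely, suppose $\partial N_n^j$ is contact isotopic to a torus $T$ inside some $N_i^{j'}$ with $i>n$. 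Then $T$ is one of the convex tori of slope $q/p$ inside the length $2(i-n)$ block of $N_i^{j'}$, so the appropriate $(i-n)$-fold stabilization of $L_i^{j'}$ furnished by Theorem~\ref{stabilizellc} is a Legendrian divide on $T$; transporting back by the contact isotopy, it is the divide $L_n^j$ on $\partial N_n^j$, so $L_n^j$ destabilizes to $L_i^{j'}$.
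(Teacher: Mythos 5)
Your treatment of the cases $i>n$ and $i=n$ is essentially fine and in line with the paper (which disposes of them even more tersely via Theorems~\ref{llc-isotopy} and~\ref{stabilizellc}), but the case $i<n$ — which you correctly identify as the heart of the lemma — contains a genuine gap. The ``key step,'' that the solid torus $S'$ carrying the destabilization contains $N_i^j$ up to contact isotopy, is exactly what cannot be obtained from ``the standard nesting of solid tori of different dividing slopes around Legendrians in the knot type $K$.'' Two solid tori representing the same knot type with different dividing slopes need not be nested; the failure of such nesting (non-thickenable tori) is precisely the phenomenon the $N_i^j$ are designed to capture, and the tori involved here need not be standard neighborhoods of Legendrian representatives of $K$ at all. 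The paper's proof does not derive such a containment after the fact: it starts from $T_0=\partial N_i^j$, uses Lemma~\ref{smooth-isotopy} and Colin's isotopy discretization to produce a chain of tori related by bypasses attached in the complement of $L_i^j$, and runs an induction keeping the dividing slope in $[a_i,a_{i+1})$ and keeping $\partial N_i^j$ inside each torus of the chain — the upper bound coming from non-thickenability of $N_i^j$, the lower bound from the fact that $L_i^j$ meets the dividing curves minimally and the bypasses avoid it — concluding via a twisting computation that the destabilized knot cannot sit on any torus in the chain. Some argument of this type (or an equivalent substitute) is needed; your proposal does not supply it.

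A secondary problem: the assertion that $|q/p\bigcdot s'|<|q/p\bigcdot a_i|$ forces $s'\in[a_{i+1},q/p]$ is not correct as stated. Remark~\ref{genintersect} controls slopes counterclockwise of $q/p$, but slopes clockwise of $q/p$ (for instance $\lceil q/p\rceil$ or the tail slopes $a_{n+1},\dots$) also satisfy $|s'\bigcdot q/p|<|a_i\bigcdot q/p|$, and since the standing hypothesis is only $q/p<\omega(K)$, solid tori in the knot type $K$ with such boundary slopes may exist and could a priori carry the destabilized knot as a ruling curve of the correct Thurston--Bennequin invariant. Your argument would need to rule this case out separately, whereas the paper's induction never needs to, since the slope control $a_i\le s_l<a_{i+1}$ is established along the whole chain of tori rather than inferred from the value of $\tb$ alone.
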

\begin{proof}
For $i > n$, the lemma follows from Theorem~\ref{llc-isotopy}. The $i=n$ case follows from Theorem~\ref{stabilizellc} since it shows exactly which $L_n^j$'s are stabilizations of Legendrian large cables. 

Now suppose $L_i^j$ destabilizes to $L$ for $i < n$ with $\tb(L)=\tb(L_i^j)+1$. Then there is a convex torus $T$ bounding a solid torus representing $K$ on which $L$ sits. We can arrange the foliation of $T$ so that $L_i^j$ also sits on $T$ along with some bypasses that can be used to destabilize it to $L$. Thus we can assume that $\partial N_i^j$ agrees with $T$ along $L_i^j$. By Lemma~\ref{smooth-isotopy}, there is a smooth isotopy from $\partial N_i^j$ to $T$ that fixes $L_i^j$ and thus by Colin's isotopy discretization~\cite{Honda02} we can get from $T_0=\partial N_i^j$ to $T_m=T$ by a sequence of tori $T_l$ such that $T_{l+1}$ is obtained from $T_l$ by a bypass attachment from the inside or the outside of $T_l$ that is disjoint from $L_i^j$ (we say a bypass is attached from the outside if it is attached outside the solid torus that $T_l$ bounds and attached to the inside otherwise). Denote by $s_l$ the slope of the dividing curves on $T_l$ and by $S_l$ the solid torus bounded by $T_l$. Now we inductively show that each $T_l$ satisfies the following properties, which proves that $L$ cannot exist. 
\begin{itemize}
  \item $a_i\leq s_l < a_{i+1}$, and
  \item $T_l$ is the front boundary of $B_l=T^2\times [0,1]$ such that the back boundary is $\partial N_i^j$, and
  \item $L$ cannot sit on $T_l$.
\end{itemize}
Here, we can think of $B_l$ as a neighborhood of the boundary of a solid torus $S_l$. Clearly these are true for $T_0$. Now assume they are true for $T_l$ and we then show that they are also true for $T_{l+1}$. 

First, if $s_{l+1} \geq a_{i+1}$, this implies that the bypass is attached from outside of $S_l$. However, since $\partial N_i^j$ is contained in $S_l$ by hypothesis, this would mean that $N_i^j$ is contained in a solid torus with dividing slope $a_{i+1}$, contrary to the definition of $N_i^j$. 

If $s_{l+1}<a_i$, this implies that $s_l=a_i$ and the bypass is attached from inside of $S_l$. Then the dividing curves on $T_{l+1}$ would intersect a $q/p$ slope curve more times than the dividing curves on $T_l$ by Remark~\ref{genintersect}. However, $L_i^j$ intersects the dividing curves on $T_l$ minimally and the bypass was attached in the complement of $L_i^j$, so the number of intersections with the dividing curves cannot increase.

The last case is when $a_i \leq s_{l+1} < a_{i+1}$. If the bypass is attached from outside of $S_l$, clearly $B_{l+1}$ is  $B_l$ with a bypass attached to its front face. If the bypass is attached from inside, then $S_{l+1}$ is contained in $S_l$. By Remark~\ref{remarktori}, there is a unique torus with dividing slope $a_i$ in $S_l$ so $S_{l+1}$ also contains a torus, which is contact isotopic to $\partial N_i^j$ by hypothesis. Thus we can take $B_{l+1}$ with the back boundary $\partial N_i^j$ and the front boundary $T_{l+1}$. Now by Remark~\ref{genintersect}, we have
\[
  |a_i\bigcdot q/p|\leq|s_l\bigcdot q/p|. 
\]
Since $L$ is a destabilization of $L_i^j$, $L$ should have bigger twisting number with respect to $T_{l+1}$ compared to $L_i^j$.
\[
  tw(L,T_{l+1}) > tw(L_i^j,T_{l+1})=-|a_i\bigcdot q/p|.
\]
However, any Legendrian knot in $\mathcal{L}(K_{(p,q)})$ that sits on $T_{l+1}$ has at most $-|s_{l+1}\bigcdot q/p|$ twisting number with respect to $T_{l+1}$. Thus $L$ cannot sit on $T_{l+1}$. This completes our induction and the proof.
\end{proof}

  \begin{lemma}\label{negative-nonisotopic}
  If $K$ is $q/p$-minimally thickenable, then $L_i^j$ and $L_{i'}^{j'}$ are Legendrian isotopic if and only if $i = i'$ and $j = j'$.
\end{lemma}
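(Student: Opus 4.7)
The reverse implication is immediate. For the forward direction, the plan is to handle the case $i\neq i'$ using classical invariants, and to handle the case $i=i'$ with $j\neq j'$ by lifting the Legendrian isotopy to a global contactomorphism and showing it sends $N_i^j$ to a solid torus contact isotopic to $N_i^{j'}$, contradicting their distinctness.

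First I would show that the Thurston-Bennequin invariant separates different $i$. By Lemma~\ref{negative-tbrot} for $i\leq n$ and Theorem~\ref{llc-model} for $i>n$ we have
\[
\tb(L_i^j)=\begin{cases} pq-|a_i\bigcdot q/p| & i\leq n,\\ pq+(i-n) & i>n.\end{cases}
\]
By Remark~\ref{genintersect} the quantity $|a_i\bigcdot q/p|$ is strictly decreasing in $i$ for $i\leq n$, so $\tb(L_i^j)$ is strictly monotone increasing in $i$ across all cases. Hence Legendrian isotopic standard cables must share the same index $i$.

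Second, assume $i=i'$. Using Theorem~\ref{Legendrian-isotopy}, pick a contactomorphism $\phi$ of $(S^3,\xi_{std})$ with $\phi(L_i^j)=L_i^{j'}$, and set $\widetilde{N}=\phi(N_i^j)$. Then $\widetilde{N}$ is contactomorphic to $N_i^j$ and contains $L_i^{j'}$ in exactly the same prescribed way that $N_i^j$ contains $L_i^j$: as a $q/p$ ruling curve on the boundary for $i<n$, as a Legendrian divide on the boundary for $i=n$, and as the Legendrian large cable of an interior balanced continued fraction block for $i>n$. The entire proof now reduces to showing $\widetilde{N}$ is contact isotopic to $N_i^{j'}$, since this contradicts the distinctness of the $N_i^j$ unless $j=j'$.

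Third, to produce this contact isotopy I would argue as follows. For $i\leq n$, both $\partial\widetilde{N}$ and $\partial N_i^{j'}$ are convex tori with two dividing curves of slope $a_i$ and a common Legendrian curve $L_i^{j'}$, so by Lemma~\ref{smooth-isotopy} they are smoothly isotopic fixing $L_i^{j'}$. Colin's isotopy discretization then yields a sequence of convex tori $T_0=\partial\widetilde{N},\ldots,T_m=\partial N_i^{j'}$, each containing $L_i^{j'}$, with consecutive tori differing by a single bypass attachment disjoint from $L_i^{j'}$. The main obstacle, and the heart of the argument, is the inductive bypass-tracking step, closely modeled on the proof of Lemma~\ref{negative-nondestab}: I would show that the dividing slope of each $T_l$ stays at $a_i$ with two dividing curves, and that the bounded solid torus $S_l$ never thickens to slope $a_{i+1}$. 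The dividing slope cannot drop below $a_i$ because by Remark~\ref{genintersect} this would force $L_i^{j'}$ to intersect the dividing curves of $T_l$ more times than it actually does, and the slope cannot reach $a_{i+1}$ because this would contradict the non-thickening hypothesis on either $\widetilde{N}$ or $N_i^{j'}$. The case $i>n$ is handled analogously: Theorem~\ref{llc-isotopy} provides contact isotopy of the interior continued fraction blocks associated to $L_i^j$ and $L_i^{j'}$, after which a parallel discretization of the collar between the block and the outer convex torus, using the hypothesis that these tori do not thicken to a longer balanced continued fraction block, yields contact isotopy of $\widetilde{N}$ and $N_i^{j'}$.
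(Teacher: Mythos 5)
Your overall strategy (reduce to showing the two solid tori are contact isotopic by isotoping them to share the common Legendrian curve and then running Colin's isotopy discretization, controlling the bypasses inductively) is the same as the paper's, and your treatment of $i\neq i'$ via $\tb$ and of $i=i'<n$ via the argument of Lemma~\ref{negative-nondestab} matches the paper. However, there is a genuine gap in your treatment of the case $i=n$, which is precisely the case the hypothesis of the lemma is there for. When $i=n$ the common curve $L$ is a Legendrian divide, so it meets the dividing set of each torus $T_l$ in zero points; this pins the dividing slope at $q/p$ but places \emph{no} constraint on the number of dividing curves, so a bypass attached in the complement of $L$ can perfectly well increase the number of dividing curves on $T_l$. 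Your claim that ``the dividing slope of each $T_l$ stays at $a_i$ with two dividing curves'' is therefore unjustified for $i=n$ (for $i<n$ it is forced, since $L$ meets the dividing set in exactly $2|a_i\bigcdot q/p|>0$ points and more than two dividing curves of slope $a_i$ would require more intersections). Moreover, the ``non-thickening hypothesis'' you invoke does not exist for the tori $N_n^j$: they are defined simply as solid tori with two dividing curves of slope $a_n$, and they may well thicken. The paper handles this case by a different induction: it shows each $T_l$ lies in an $I$-invariant neighborhood of a torus contact isotopic to $\partial N_n^j$, and when an outside bypass increases the number of dividing curves it uses the $q/p$-minimal thickenability of $K$ to find, outside $S_{l+1}$, a convex torus with two dividing curves of slope $q/p$ and thereby rebuild the $I$-invariant neighborhood. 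Note that your proposal never uses the $q/p$-minimal thickenability hypothesis at all, which is a strong sign that this case is not covered.

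A smaller point: quoting Theorem~\ref{Legendrian-isotopy} to produce a contactomorphism $\phi$ with $\phi(L_i^j)=L_i^{j'}$ is both the wrong direction and too weak. If $\phi$ is merely a contactomorphism, then showing $\widetilde{N}=\phi(N_i^j)$ is contact isotopic to $N_i^{j'}$ only shows $N_i^j$ and $N_i^{j'}$ are contactomorphic, which does not contradict their distinctness \emph{up to contact isotopy}. What you should use is the contact isotopy extension theorem applied to the assumed Legendrian isotopy from $L_i^j$ to $L_i^{j'}$: this gives an ambient contact isotopy carrying $N_i^j$ to a solid torus $\widetilde{N}$ contact isotopic to $N_i^j$ and containing $L_i^{j'}$ in the prescribed way, after which your reduction (and the paper's) goes through.
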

\begin{proof}
Clearly $i=i'$ and $j=j'$ imply $L_i^j$ and $L_{i'}^{j'}$ are Legendrian isotopic. For the converse we notice that by Lemma~\ref{negative-tbrot}, $L_i^j$ and $L_{i'}^{j'}$ have different Thurston-Bennequin invariants for $i\not = i'$. Thus we know that $i=i'$ if $L_i^j$ is isotopic to $L_{i'}^{j'}$. We now show $j=j'$.  

We first deal with the $i=n$ case. Assume that $L_n^j$ and $L_n^{j'}$ are Legendrian divides of $N_n^j$ and $N_n^{j'}$ respectively and are Legendrian isotopic. Denote $L_n^j=L_{n}^{j'}$ by $L$. After contact isotopy, we can assume that $\partial N_n^j$ and $\partial N_n^{j'}$ share a common Legendrian divide $L$.  By Lemma~\ref{smooth-isotopy}, there is a smooth isotopy from $\partial N_n^j$ to $\partial N_n^{j'}$ fixing $L$. By Colin's isotopy discretization~\cite{Honda02} we can get from $T_0=\partial N_n^j$ to $T_m=\partial N_n^{j'}$ by a sequence of tori $T_l$ such that $T_{l+1}$ is obtained from $T_l$ by a bypass attachment from the inside or the outside of $T_l$ that is disjoint from $L$ (we say a bypass is attached from the outside if it is attached outside the solid torus that $T_l$ bounds and attached to the inside otherwise). We inductively claim that $T_l$ is contained in an $I$-invariant neighborhood of a torus that is contact isotopic to $\partial N_n^j$. Given this then clearly $\partial N_n^{j'}$ is contact isotopic to $\partial N_n^j$, contradicting their definition, so $L_n^j$ is not Legendrian isotopic to $L_{n}^{j'}$. 
  
To verify the claim we note that $T_0=\partial N_n^j$ is contained in an $I$-invariant neighborhood of $\partial N_n^j$. Now inductively assume that $T_l$ is contained in such an $I$-invariant neighborhood $B$ of a torus that is contact isotopic to $\partial N_n^j$. Notice that this implies that both boundary components of $B$ are isotopic to $\partial N_n^j$. The torus $T_l$ divides $B$ into two pieces, $B_+$ containing the front face of $B$ and $B_-$ containing the back face. Suppose $T_{l+1}$ is obtained by attaching a bypass to the outside of $T_l$. In this case, notice that the slope of the dividing curves does not change since $L$ will still have twisting zero with respect to the torus. Thus the bypass is trivial or increases or decreases the number of dividing curves. Since $K$ is $q/p$-minimally thickenable, there is a convex torus $T'$ that is outside of the solid torus that $T_{l+1}$ bounds that has two dividing curves of slope $q/p$. Now $T_l$ and $T'$ cobound a thickened torus $B'$ that contains $T_{l+1}$ and the contact structure on the union $B_-\cup B'$ is an $I$-invariant neighborhood of a torus isotopic to $\partial N_n^j$, as desired. A similar, but easier, argument works if the bypass is attached from the inside since the contact structure on the solid torus bounded by $T_l$ contains convex tori of different slopes and hence when attaching a bypass if the number of dividing curves changes, one can find a convex torus with just two dividing curves further inside the solid torus.   

Now we deal with the $i < n$ case. Suppose $L_i^j$ and $L_{i}^{j'}$ are ruling curves on $N_i^j$ and $N_i^{j'}$, and Legendrian isotopic. After contact isotopy, $N_i^j$ and $N_{i}^{j'}$ share a common Legendrian ruling curve $L=L_i^j=L_i^{j'}$. By Lemma~\ref{smooth-isotopy}, there is a smooth isotopy from $\partial N_i^j$ to $\partial N_i^{j'}$ that does not move $L$. By Colin's isotopy discretization, we can get from $T_0=\partial N_i^j$ to $T_m=\partial N_i^{j'}$ by a sequence of tori $T_l$ such that $T_{l+1}$ is obtained from $T_l$ by a bypass attachment from the inside or the outside of $T_l$ that is disjoint from $L$. In the proof of Lemma~\ref{negative-nondestab}, we showed that there is $B_l=T^2\times [0,1]$ such that the front boundary is $T_l$ and the back boundary is $\partial N_i^j$ for all $l=0,...,m$. Then clearly $B_m$ is an $I$-invariant neighborhood from $\partial N_i^j$ to $\partial N_i^{j'}$, contradicting the fact that $N_i^j$ and $N_i^{j'}$ are not contact isotopic. 

For $i > n$, the lemma follows from Theorem~\ref{llc-isotopy}.
\end{proof}

\begin{lemma}\label{relation2}
If $L = S_+^{k}S_-^{l}(L_i^j) = S_+^{k'}S_-^{l'}(L_{i'}^{j'})$ for $i\leq n$ and $i'\leq n$, then they are related as indicated in Item~\eqref{neg7} of Theorem~\ref{classification-negative}.
\end{lemma}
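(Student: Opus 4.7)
The plan is to identify a maximal commensurating torus $\tilde N$ (or a minimal super commensurating torus when $i=i'=n$) whose standard Legendrian cable $\tilde L$ is a common destabilization of both $L_i^j$ and $L_{i'}^{j'}$, such that $L$ arises from $\tilde L$ by identical further stabilizations on the two sides. This will exhibit the equivalence in the form prescribed by Item~\eqref{neg7} of Theorem~\ref{classification-negative}.

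First, in the trivial case $k+l=0$ the knot $L$ equals $L_i^j$, and the combination of Lemmas~\ref{negative-nondestab} and~\ref{negative-nonisotopic} forces $k'+l'=0$, $i=i'$, and $j=j'$. Assume henceforth $k+l,k'+l'>0$. After applying the contact isotopy extension theorem we may take $L$ to be a fixed Legendrian knot, and by shrinking a standard neighborhood $\nu(L)$ if necessary, we arrange $\nu(L)\subset N_i^j\cap N_{i'}^{j'}$. The identities $\tb(L)=\tb(L_i^j)-(k+l)=\tb(L_{i'}^{j'})-(k'+l')$ together with Lemma~\ref{negative-tbrot} give the compatibility $|a_i\bigcdot q/p|+k+l=|a_{i'}\bigcdot q/p|+k'+l'$.

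Next I would construct $\tilde N$ by thickening $\nu(L)$ maximally inside $N_i^j\cap N_{i'}^{j'}$, subject to the constraint that its dividing slope $a_m$ (with $m<\min\{i,i'\}$ on the Farey path from $\lfloor q/p\rfloor$ clockwise to $q/p$) satisfies $|a_m\bigcdot q/p|\le |a_i\bigcdot q/p|+k+l$. Existence of such a maximal commensurating torus relies on the realizability of Farey-path slopes in minimally twisting thickened tori (Lemma~\ref{realizeslopes} and Theorem~\ref{basic-slice}). The main technical obstacle lies here: one must verify the thickening can be carried out entirely inside the intersection $N_i^j\cap N_{i'}^{j'}$, which I expect to require bypass analysis modeled on Lemmas~\ref{negative-nondestab} and~\ref{negative-nonisotopic}. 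Specifically, any bypass attachment that would push the thickening outside $N_i^j\cap N_{i'}^{j'}$ must be ruled out by the non-thickenability hypotheses on $N_i^j$ and $N_{i'}^{j'}$ together with the fixed geometric intersection number of $L$ with the relevant dividing curves.

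With $\tilde N$ in hand, let $\tilde L$ be the standard $q/p$-ruling curve on $\partial\tilde N$. Item~\eqref{neg5} of Theorem~\ref{classification-negative} applied to $\tilde N$ yields
\[
\tilde L=S_+^{k_0}S_-^{l_0}(L_i^j)=S_+^{k_0'}S_-^{l_0'}(L_{i'}^{j'}),
\]
where $k_0+l_0=|(a_i\ominus a_m)\bigcdot q/p|$ and $k_0'+l_0'=|(a_{i'}\ominus a_m)\bigcdot q/p|$, with the individual counts determined by the positive/negative basic slice sums in the decompositions of $N_i^j\setminus\tilde N$ and $N_{i'}^{j'}\setminus\tilde N$. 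Because $\nu(L)\subset\tilde N$ and the choice of $m$ ensures $\tb(\tilde L)\ge\tb(L)$, the knot $L$ sits inside $\tilde N$ as a further stabilization of $\tilde L$. Applying Lemma~\ref{negative-tbrot} from both sides to compare $\rot(L)-\rot(\tilde L)$ pins down the further stabilization counts as
\[
L=S_+^{k-k_0}S_-^{l-l_0}(\tilde L)=S_+^{k'-k_0'}S_-^{l'-l_0'}(\tilde L),
\]
exhibiting $S_+^kS_-^l(L_i^j)=S_+^{k'}S_-^{l'}(L_{i'}^{j'})$ in the form required by Item~\eqref{neg7}: the Item~\eqref{neg5} equivalence at the level of $\tilde L$ followed by identical further stabilizations on both sides. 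In the exceptional case $i=i'=n$ where no suitable maximal commensurating torus with slope $<q/p$ exists, the argument runs with a minimal super commensurating torus and Item~\eqref{neg6} in place of Item~\eqref{neg5}.
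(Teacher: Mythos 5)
There is a genuine gap, and it sits exactly where you flag your ``main technical obstacle.'' Item~\eqref{neg7} of Theorem~\ref{classification-negative} only asserts that the two stabilizations are related by a \emph{sequence} of equivalences coming from Items~\eqref{neg5} and~\eqref{neg6}, and the paper's proof is structured precisely to produce such a chain: it places $L$ on both $\partial N_i^j$ and $\partial N_{i'}^{j'}$, uses Lemma~\ref{smooth-isotopy} and Colin's isotopy discretization to get a sequence of tori containing $L$ related by bypasses disjoint from $L$, arranges that the dividing slope never crosses $q/p$ strictly, and then extracts from the tori of slope $q/p$ in this sequence a chain of Legendrian divides $L_{k_0}=L_i^j, L_{k_1},\dots,L_{k_{r+1}}=L_{i'}^{j'}$ in which consecutive knots share a common stabilization realized by a maximal commensurating torus or a minimal super commensurating torus. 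Your proposal instead tries to exhibit a \emph{single} maximal commensurating torus $\tilde N\subset N_i^j\cap N_{i'}^{j'}$ through which both factorizations pass, with super commensurating tori relegated to the case $i=i'=n$. Nothing in the hypotheses guarantees such a $\tilde N$ exists: the two solid tori are not nested, their intersection (however the contact isotopy positions them) need not contain a convex torus of any Farey slope $a_m$, and in general the equality of stabilizations is mediated only by passing through tori of slope $q/p$ and super commensurating tori outside both solid tori --- already for $i=i'=n$, $j\neq j'$ there need be no common commensurating torus, and even for $i,i'<n$ the discretization sequence can pass through slopes $\geq q/p$, which is why the paper's argument needs the super commensurating case in general and not just when $i=i'=n$. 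Lemma~\ref{realizeslopes} and Theorem~\ref{basic-slice} only control slopes inside a single minimally twisting thickened torus and do not address this.

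A secondary issue: even granting $\tilde N$, the claim that $L$ sits in $\tilde N$ as a stabilization of $\tilde L$ ``because $\nu(L)\subset\tilde N$ and $\tb(\tilde L)\geq\tb(L)$'' is not automatic; it requires the annulus/bypass destabilization argument of Lemma~\ref{destabtostd} (or of the proof of Item~\eqref{neg5}), and comparing rotation numbers via Lemma~\ref{negative-tbrot} only identifies which stabilization of $\tilde L$ shares the invariants of $L$, not that $L$ is one. That part is fixable, but the essential missing idea is the chain construction via isotopy discretization, together with the slope bookkeeping (never crossing $q/p$, and the analysis in Remark~\ref{remarktori} of shuffled basic slices) that the paper uses to produce commensurating and super commensurating tori step by step.
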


\begin{proof}
Suppose $L=S_+^{k}S_-^{l}(L_i^j) = S_+^{k'}S_-^{l'}(L_{i'}^{j'})$. We can put $L$ on $\partial N_i^j$ and on $\partial N_{i'}^{j'}$ and thus after contact isotopy we can assume that $\partial N_i^j$ and $\partial N_{i'}^{j'}$ agree along $L$. Now using Lemma~\ref{smooth-isotopy} we can find a smooth isotopy from $\partial N_i^j$ and $\partial N_{i'}^{j'}$ that fixes $L$. By Colin's isotopy discretization~\cite{Honda02} we can get from $T_0=\partial N_i^j$ to $T_m=\partial N_{i'}^{j'}$ by a sequence of tori $T_u$ such that $T_{u+1}$ is obtained from $T_u$ by a bypass attachment from the inside or the outside of $T_u$ that is disjoint from $L$. Denote by $s_u$ the slope of the dividing curves on $T_u$ and by $S_u$ the solid torus bounded by $T_u$.

We begin by claiming that we can alter this sequence of tori so that we never have $q/p$ strictly between $s_u$ and $s_{u+1}$. To see this suppose that $s_u<q/p<s_{u+1}$ (the other case being analogous). So $T_u$ and $T_{u+1}$ cobound $B=T^2\times I$ and inside $B$ there is a convex torus $T$ with dividing slope $q/p$ and two dividing curves. Notice that $B\setminus T$ is two thickened tori $B_-$ and $B_+$ where $B_-$ contains $T_u$ and $B_+$ contains $T_{u+1}$. Now inside of $B_-$ we can take a copy $T'$ of $T$ on which $L$ sits (there is clearly such a torus as $L$ destabilizes to a Legendrian divide on $T$). Now inside of $B_-$ we can isotope $T'$ so that it agrees with $T_u$ along $L$. Thus we can use isotopy discretization to find a sequence of tori going from $T_u$ to $T'$ that are each related to the next by a bypass attachment. Since all the tori are contained in $B_-$ we know the slopes of the tori are all between the slope of $s_u$ and $q/p$. In particular, they are all less than or equal to $q/p$. Now similarly in $B_+$ we can take a copy $T''$ of $T$ that contains $L$ and as above we can find a sequence of tori from $T''$ to $T_{u+1}$ that are related by bypass attachments and have slopes between $q/p$ and $s_{u+1}$. Since $T'$ and $T''$ can be taken to be contact isotopic, we have proven our claim. 

We will now show that 
\begin{enumerate}
  \item If $s_u\geq q/p$ and $s_{u+1} \geq q/p$, for any $N_n^{j_u}\subseteq S_u$ and $N_n^{j_{u+1}}\subseteq S_{u+1}$ there is a super commensurating torus which contains $L$, and 
  \item If $s_u \leq q/p$ for $u=a,a+1,...,b$, for $T_a$ and $T_b$ there is a commensurating torus with slope $a_s$ for some $s\in\mathbb{Z}$ which contains $L$. 
\end{enumerate}
Once we have proven this, take a subsequence ${T_{k_0},...,T_{k_{r+1}}}$ such that $T_{k_0} = T_0$, $T_{k_{r+1}} = T_m$ and $T_{k_1},...,T_{k_r}$ have the slope $q/p$. Let $L_{k_u}$ be the Legendrian divide of $T_{k_u}$ for $u=1,...,r$. Then we have a sequence $L_i^j = L_{k_0}, L_{k_1}, \ldots, L_{k_r}, L_{k_{r+1}} = L_{i'}^{j'}$ such that $L_{k_u}$ and $L_{k_{u+1}}$ for each $u=0,...,r$ have a common stabilization that is related by (1) or (2); in the case of (1) the Legendrian ruling curve of the super commensurating torus is a common stabilization of $L_{k_u}$ and $L_{k_{u+1}}$, and in the case of (2) the Legendrian ruling curve of the commensurating torus is a common stabilization of $L_{k_u}$ and $L_{k_{u+1}}$. After that, one may take a solid torus $S$ with the minimal slope inside of the super commensurating torus that contains $S_{k_u}\cup S_{k_{u+1}}$ (after contact isotopy). This will be the minimal super commensurating torus for $T_{k_u}$ and $T_{k_{u+1}}$. Similarly take a solid torus with the maximal slope $a_s$ containing the commensurating torus and contained in $S_{k_u}\cap S_{k_{u+1}}$ (after contact isotopy). This will the maximal commensurating torus for $T_{k_u}$ and $T_{k_{u+1}}$. Thus $S_+^{k}S_-^{l}(L_i^j)$ and $S_+^{k'}S_-^{l'}(L_{i'}^{j'})$ will be related by a sequence of minimal super commensurating and maximal commensurating tori (and further stabilizations) as claimed in Item~\eqref{neg7} of Theorem~\ref{classification-negative} and the lemma is complete.

To verify (1) notice that each $S_u$ contains some $N_n^{j_u}$ for some $j_u$ if $s_u\geq q/p$. Now $N_{n}^{j_u}$ and $N_{n}^{j_{u+1}}$ are both contained in either $S_u$ or $S_{u+1}$, depending on whether the bypass from $T_u$ to $T_{u+1}$ was attached from the outside or inside. Also $L$ sits on both $T_u$ and $T_{u+1}$ so there is a super commensurating torus for $N_n^{j_u}$ and $N_n^{j_{u+1}}$ containing $L$.

To verify (2), we first show for each $u$ there is a convex torus $T^c_u$ with dividing slope $a_{k_u}$ for some $k_u \in \mathbb{Z}$ that contains $L$ and is contained in $S_u\cap S_{u+1}$ (after contact isotopy). To this end, notice that $S_u$ is contained in or contains $S_{u+1}$, depending on whether the bypass from $T_u$ to $T_{u+1}$ was attached on the inside or outside. Suppose $S_u\subseteq S_{u+1}$ then $T^c_u$ is simply the convex torus inside $S_u$ with slope $a_{k_u}$ where $k_u$ is the largest integer such that $a_{k_u}$ is less than or equal to $s_u$. Notice that $a_{k_u}$ is less than or equal to $a_n=q/p$ by our assumption on the slopes of the $T_u$. Clearly $L$ sits on $T^c_u$ as in the proof of Lemma~\ref{destabtostd}. The same argument gives $T_u^c$ when $S_{u+1}\subseteq S_u$.

We now show that there is a commensurating torus for $T_a$ and $T_b$ containing $L$. We will do this by inductively showing that there is a convex torus with slope $a_s$ for some $s$ that contains $L$ and is contained in $S_a\cap S_{u}$ (after contact isotopy). The base case is clear as we have already proven there is a commensurating torus for $S_a$ and $S_{a+1}$ containing $L$. We now assume we have a commensurating torus $T$ for $S_a$ and $S_{u}$ containing $L$. There are two cases to consider. The first is when $s_{u}\in (a_s, a_{s+1})$ for some $s$. In this case $s_{u+1}$ will be in $[a_s,a_{s+1}]$ since $T_u$ and $T_{u+1}$ are related by a single bypass attachment. Thus we see that the slope of $T$ must be less than or equal to $a_s$ and we know that the slope of $T^c_{u}$ is $a_s$. Using the same arguments as in Remark~\ref{remarktori} there is a unique torus in $S_{u}$ with slope $a_s$ and all tori in $S_{u}$ with slope smaller than $a_s$ are contained in this torus. Thus $T$ is contained in a solid torus bounded by $T_u^c$ and we see that $T$ is contained in $S_a\cap S_{u+1}$, finishing the inductive step in this case. 

The other case is when $s_{u}=a_s$ for some $s$. Since $T_{u+1}$ is obtained from $T_{u}$ by a single bypass attachment, we know $s_{u+1}$ is in $[a_{s-1},a_{s+1}]$ or larger than $a_n=q/p$. By hypothesis, it cannot be larger than $a_n$, so we only need to consider the former case. If $s_{u+1} \in [a_{s}, a_{s+1}]$, then $S_{u+1}$ contains $S_{u}$ and hence also $T$. Thus $T$ is contained in $S_a\cap S_{u+1}$. If $s_{u+1}\in (a_{s-1}, a_s)$, then as in Remark~\ref{remarktori}, there is a unique solid torus in $S_{u}$ with boundary slope $s_{u+1}$ and all tori with slope less than or equal to $s_{u+1}$ are contained in the solid torus. Clearly this solid torus is $S_{u+1}$. If the slope of $T$ is less than or equal to $a_s$, it must also be contained in $S_{u+1}$ and hence $T\subset S_a\cap S_{u+1}$. 

The last case is when $s_{u+1}=a_{s-1}$. If the slope of $T$ is $a_s$, this implies that $T = T_{u}$ so $T_{u}^c\subset S_a\cap S_{u+1}$. Now suppose the slope of $T$ is less than or equal to $a_{s-1}$. If $T$ is contained in $S_{u+1}$, then we obviously have $T\subset S_a\cap S_{u+1}$. If $T$ is not contained in $S_{u+1}$, then by Remark~\ref{remarktori}, $s_u$ and $s_{u+1}$ and $s_T$, the slope of $T$, are vertices in a continued fraction block (since if not there would be a unique torus of slope $s_{u+1}$ in $S_u$ and it would have to contain $T$). Notice that there is another solid torus $S'_{u+1}$ with boundary slope $s_{u+1}$ that contains $T$ and is contained in $S_u$. Suppose the sign of the basic slice with the front boundary $\partial S_u$ and the back boundary $\partial S_{u+1}$ is positive (the other case being analogous). Then the sign of the basic slice with the front boundary $\partial S_u$ and the back boundary $\partial S'_{u+1}$ should be negative (or else $S'_{u+1}$ is contact isotopic to $S_{u+1}$ and we are done). This implies that all basic slices between $\partial S_{u}$ and $T$ have negative signs since if it is not true, then $T$ is contained in $S_{u+1}$ by shuffling the signs of basic slices. Let $L_u$ be the ruling curve on $\partial S_{u}$. Since $L$ sits on $\partial S_{u+1}$, $L$ is a further stabilization of $S_+^{x}(L_u)$ where $x=|(s_u\ominus s_{u+1})\bigcdot q/p|$ by Lemma~\ref{negative-tbrot}. Similarly, since $L$ sits on $T$, $L$ is a further stabilization of $S_-^y(L_u)$ where $y=|(s_u\ominus s_T)\bigcdot q/p|$. Notice that there is another convex torus $T'$ whose slope is a vertex of the continued fraction block above which is contained in a solid torus bounded by $T$ related by a single bypass attachment such that the sign of the basic slice with front boundary $T$ and the back boundary $T'$ is positive. Since the ruling curve on $T'$ is Legendrian isotopic to $S_+^xS_-^y(L_u)$, we may further stabilize it to obtain $L$ and put it on $T'$. Clearly $T' \subset S_a\cap S_{u+1}$.
\end{proof}

\begin{proof}[Proof of Theorem~\ref{classification-negative}.]
Item~\eqref{neg1} of the theorem follows from Lemmas~\ref{destabtostd} and~\ref{negative-nondestab}. Item~\eqref{neg2} is precisely the content of Lemma~\ref{negative-nonisotopic}. Items~\eqref{neg3} and~\eqref{neg4} follow from Lemma~\ref{negative-tbrot} if $i\leq n$ and for $i>n$ from Theorem~\ref{llc-model} and Lemma~\ref{llcrot}. Items~\eqref{neg8} and~\eqref{neg9} follow from Theorems~\ref{stabilizellc} and~\ref{llc-isotopy}, respectively. We also observe that Item~\eqref{neg10} is an immediate consequence of the other items. 

So we are left to check Items~\eqref{neg5}, \eqref{neg6}, and~\eqref{neg7}. We begin with Item~\eqref{neg5}. Suppose $N$ is a maximal commensurating torus for $N_i^j$ and $N_{i'}^{j'}$ (see Section~\ref{negcablesec} for the terminology) with dividing slope $a_m$ for $m\leq\min\{i, i'\}$, so after a contact isotopy we can assume that $N$ is a subset of $N_i^j\cap N_{i'}^{j'}$. Let $L$ be a ruling curve on $\partial N$ of slope $q/p$. Take an annulus $A$ in $N_i^j\setminus N$ with one boundary component on $L$ and the other on $L_i^j$ and make it convex. Notice that there cannot be a bypass for $\partial N_i^j$ along $A$, since if there were then attaching it would result in a torus in $N_i^j\setminus N$ with dividing slope $a_{i-1}$. Since this bypass can be attached in the complement of (a copy of) $L_i^j$ and slope $a_{i-1}$ curves intersect a $q/p$ curve more than slope $a_i$ curves, this cannot happen. So all the dividing curves on $A$ that start on $L_i^j$ must go from one boundary component to the other. Since the dividing curves on $T$ intersect $L$, $2|(a_i\ominus a_m) \bigcdot q/p|$ more times than the ones on $\partial N_i^j$ intersect $L_i^j$, we see that there are $|(a_i\ominus a_m)\bigcdot q/p|$ bypasses for $L$ along $A$. They will destabilize $L$ to $L_i^j$. We are left to check the signs of the stabilizations. To this end decompose $N_i^j \setminus N$ into basic slices $B_{m+1}, \ldots ,B_{i}$ where $B_s$ is a basic slice with dividing slopes $a_{s-1}$ and $a_{s}$. Suppose $B_{m+1}$ is a positive basic slice. Let $A'$ be an annulus in $B_{m+1}$ with one boundary component on $L$ and the other on a $q/p$ ruling curve on the other boundary component of $B_{m+1}$. We see as above that the number of bypasses for $L$ along $A'$ is $|(a_{m+1}\ominus a_{m}) \bigcdot q/p|$. And they will all be positive. So $L$ positively destabilizes $|(a_{m+1} \ominus a_m) \bigcdot q/p|$ times to a ruling curve on the positive boundary of $B_{m+1}$. If $B_{m+1}$ was a negative basic slice then these would be negative destabilizations. Continuing through the other basic slices we arrive at the fact that $L=S^k_+S^l_-(L_i^j)$ where $k$ and $l$ are determined as in the statement of the theorem. We can similarly check that $L$ destabilizes to $L_{i'}^{j'}$ as claimed. 

Item~\eqref{neg6} is addressed similarly to Item~\eqref{neg5}.

Item~\eqref{neg7} is exactly the content of Lemma~\ref{relation2}.
\end{proof}

\begin{proof}[Proof of Theorem~\ref{classification-in-between}.]
This is almost identical to the proof of Theorem~\ref{classification-negative}. We leave it as an exercise for the reader.
\end{proof}

\section{General results on cables}\label{grc}
We now turn to the proof of Theorem~\ref{tm1} that says a knot type $K$ is Legendrian simple if and only if its $(p,q)$-cables with $q/p\geq \lceil \omega(K) \rceil$ are.
\begin{proof}[Proof of Theorem~\ref{tm1}]
This is an immediate consequence of Theorem~\ref{classification-positive}.
\end{proof}

Recall that Theorem~\ref{tm2} says that a uniform thick knot type is transversely simple if and only if sufficiently negative cables are transversely simple. 
\begin{proof}[Proof of Theorem~\ref{tm2}]
Suppose $K$ is a uniform thick knot type. Since $K$ is uniformly thick there is a finite number of Legendrian knots $L_1,\ldots, L_n$ with the maximal Thurston-Bennequin invariant and all other Legendrian knots are stabilizations of one of these. 

Suppose that $K$ is transversely simple. Then by Theorem~\ref{traniso} there is some maximal $m$ such that the Legendrian knots $S_-^m(L_1),\ldots, S_-^m(L_n)$ are distinguished by the rotations numbers. Suppose we have numbered $L_i$ so that $S_-^m(L_1),\ldots, S_-^m(L_k)$, for some $k\leq n$, are distinct Legendrian knots and the others are isotopic to one of these. Now there is another $m'$ such that each of the $S_-^m(L_i)$ has an $m'$-fold stabilization (possibly involving both positive and negative stabilizations) that is isotopic to appropriate $m'$-fold stabilization of $S_-^m(L_j)$ for all other $j$. 

Let $l=\tbb(K)-m-m'$. We claim that any $(p,q)$-cable of $K$ with $q/p<l$ is transversely simple. We notice that if one considers all the Legendrian knots in $\mathcal{L}(K)$ with Thurston-Bennequin invariant equal to $l$, they are determined by their rotation numbers. Suppose there are $n'$ of these, so there are exactly $n'$ solid tori in the knot type $K$ with convex boundary having two dividing curves and dividing slope $l$ and they are distinguished by the relative Euler class of their complements (that is the rotation number of the associated Legendrian knot). Moreover, all solid tori with convex boundary and dividing slope less than $l$ can be enlarged to one of these solid tori. Now Theorem~\ref{classification-negative} says that a $(p,q)$-cable of $K$ is Legendrian simple if $q/p\leq l$. Thus such cables are also transversely simple. 

Now assume that $K$ is not transversely simple. Then, there exist two Legendrian knots $L$ and $L'$ in the knot type $K$ that have the same classical invariants, but $S_-^m(L)$ and $S_-^m(L')$ are distinct for all $m$. Notice that this implies that for all $s<\tb(L)$ there are two distinct solid tori with convex boundary having two dividing curves and slope $s$ whose complements have the same relative Euler number but are not contact isotopic (these come form neighborhoods of  $S_-^m(L)$ and $S_-^m(L')$  when $m$ is an integer and are contained in such neighborhoods otherwise). Now Theorem~\ref{classification-negative} says that for any $q/p<\tb(L)$ there will be standard Legendrian $(p,q)$-cables of $K$ that are distinct and remain distinct after arbitrarily many negative stabilizations. 
\end{proof}

Finally we prove Theorem~\ref{tm3} which says that if $K$ is uniformly thick and transversely simple then sufficiently negative cables of $K$ will be Legendrian simple (even if $K$ itself is not). 
\begin{proof}[Proof of Theorem~\ref{tm3}]
If $K$ is partially uniformly thick and transversely simple, then let $n$ be the integer to which all solid tori can thicken. Considering the proof of Theorem~\ref{tm2}, let $l$ be as in the third paragraph. Then the argument in the proof of Theorem~\ref{tm2} shows that any $(p,q)$-cable of $K$ with $q/p<\min \{n,l\}$ is transversely simple. 
\end{proof}

\def\cprime{$'$}


\end{document}